\documentclass[11pt]{amsart}
\overfullrule=5pt
\usepackage{amsmath,amssymb,amscd,verbatim,color}

\topmargin=0pt \oddsidemargin=0pt \evensidemargin=0pt
\textwidth=15.5cm \textheight=23cm \raggedbottom

\let\Cal=\mathcal


\newtheorem{Theorem}{Theorem}[section]
\newtheorem{Lemma}{Lemma}[section]
\newtheorem{Proposition}{Proposition}[section]
\newtheorem{Corollary}{Corollary}[section]
\newtheorem{Remark}{Remark}[section]

\newtheorem{Definition}{Definition}[section]

\numberwithin{equation}{section}

\allowdisplaybreaks


\newcommand{\R}{{\rm P^T}}

\def\R{{\Bbb R}}
\def\B{{\Bbb B}}
\def\T{{\Bbb T}}

\def\S{{\Bbb S}}
\def\E{{\Bbb E}}

\def \rmd {{\rm d}}
\def \rme {{\rm e}}

\def\bc{\begin{center}}
\def\ec{\end{center}}

\def\di{\displaystyle\int}
\def\a1{(a_1, a_2, \cdots, a_n)}

\def\cal{\Cal}

\allowdisplaybreaks

\begin{document}

\title[ Concentration and limit behaviors of
stationary measures]{ Concentration and limit behaviors of
stationary measures}

\author[W. Huang]{Wen Huang}
\address{W. Huang: Department of Mathematics, Sichuan University,
Chengdu, Sichuan 610064, PRC, and School of Mathematical Sciences, University of Science and Technology of
China, Hefei, Anhui 230026, PRC}
\email{wenh@mail.ustc.edu.cn}
\author[M. Ji] {Min Ji}
\address{M. Ji: Academy of Mathematics and System Sciences,
Chinese Academy of Sciences, Beijing 100080, PRC}
\email{jimin@math.ac.cn}
\author[Z. Liu]{Zhenxin Liu}
\address{Z. Liu: School of Mathematical Sciences, Dalian University of Technology,
Dalian 116024, PRC} \email{zxliu@dlut.edu.cn}
\author[Y. Yi]{Yingfei Yi}
\address{Y. Yi: Department of Mathematical
\& Statistical Sci, University of Alberta, Edmonton£¬ Alberta,
Canada T6G 2G1,  School of Mathematics, Jilin University, Changchun
130012, PRC, and School of Mathematics, Georgia Institute of
Technology, Atlanta, GA 30332, USA}
\email{yi@math.gatech.edu,yingfei@ualberta.ca}

\thanks {The first author was partially supported by NSFC grant 11225105. The second author was partially supported by NSFC
Innovation grant 10421101. The third author was partially supported
by  NSFC grant 11271151. The fourth author was partially supported by NSF
grant DMS1109201, a Scholarship from Jilin University, and a faculty
development fund from University of Alberta. The work
was partially done while the authors were visiting each other's
Institutions.}

\subjclass[2000]{Primary 37C40, 37C75, 34F05, 60H10; Secondary
35Q84, 60J60}

\keywords{Fokker-Planck equation, Stationary measure,
Limit measure, Concentration, Stochastic stability,
White noise perturbation}

\begin{abstract}  In this paper, we
study limit behaviors of stationary measures of the Fokker-Planck
equations associated with a system of ordinary differential
equations  perturbed by a class of
  multiplicative including additive white noises. As the noises
are vanishing, various results on the invariance  and
concentration of the limit measures
 are obtained. In particular,  we show that if the
 noise perturbed systems admit a uniform Lyapunov function, then
the stationary measures form a relatively sequentially compact set
 whose weak$^*$-limits are invariant
measures of the unperturbed system concentrated on its global
attractor. In the case that the global attractor contains a strong
local attractor, we further show that there exists a  family of
admissible multiplicative noises with respect to which all limit
measures are actually concentrated on the local attractor; and on
the contrary, in the presence of a strong local repeller in the
global attractor, there exists a family of admissible multiplicative
noises with respect to which no limit measure can be concentrated on
the local repeller. Moreover, we show that if there is a strongly
repelling equilibrium in the global attractor, then  limit measures
with respect to typical families  of  multiplicative noises are
always concentrated away from the equilibrium. As applications of
these results, an example of stochastic Hopf bifurcation is
provided.

Our study is closely related to the problem of noise
stability of compact invariant sets and invariant measures of the unperturbed system.
\end{abstract}
\maketitle

\section{Introduction}

Regarded as a physical model, a dynamical system
generated from ordinary differential equations is often subject to
noise perturbations either from its surrounding environment or from
intrinsic uncertainties associated with the system. Analyzing the
impact of noise perturbations on the dynamics of the system then
becomes a fundamental  issue with respect to both modeling and
dynamics.

There have been many studies  toward this dynamics
issue using either a trajectory-based or a distribution-based
approach. The  trajectory-based approach is often adopted under the
framework of random dynamical systems, i.e., skew-product flows with
 ergodic measure-preserving base flows. By assuming vanishing noise
at a reference equilibrium,  noise perturbations of essential
dynamics of a dynamical system are studied under  the random
dynamical system framework with respect to problems such as noise
perturbations of invariant manifolds (\cite{AK,DLS,DLS04,W}), normal
forms (\cite{Ar,AI,AX,LiLu}), and stochastic bifurcations (see
\cite{Ar} and references therein). For  a system of ordinary
differential equations subject to  white noise perturbations
vanishing at a reference equilibrium, we refer the reader to
\cite{Ha} for some study of stochastic stability of the equilibrium
(see also \cite{L} for similar studies in infinite dimension).

With respect to general noise perturbations, the distribution-based
approach is useful and seemly necessary to adopt under both
frameworks of random dynamical systems and It\^o stochastic
differential equations. Due to its essential differences from
deterministic dynamical systems, much less is known in this
direction comparing with cases using the trajectory-based approach.
For some import pioneer works on noise perturbations  of dynamical
systems on a compact manifold from the viewpoint of distributions,
we refer the reader to \cite{H63,N} for stochastic stability of
flows on a 2-torus or a periodic cycle, to \cite{FW} for stochastic
stability of equilibria and periodic cycles by introducing large
deviation theory, to \cite{CY,K,K74,You} for stochastic stability of
SRB measures, and to \cite{Zeeman} for some global stochastic
stability characterizations.

In this paper, we adopt the distribution-based approach to study the
impact of  white noises on basic dynamics of a system of ordinary
differential equations in an Euclidean space. More precisely, we
consider  a system of ordinary differential equations
\begin{equation}\label{ode}
\dot x=V(x),\qquad x\in \cal U\subset \R^n,
\end{equation}
where $\cal U$ is a connected open set which can be bounded,
unbounded, or the entire $\R^n$, and $V=(V^i)\in C(\cal U,\R^n)$. We
assume throughout the paper that \eqref{ode} generates a local flow
$\varphi^t$ on $\cal U$. The
 generality of  domain $\cal U$ does allow a wide range of applications because many physical models
 (e.g., those concerning populations and concentrations) are not necessarily defined in the entire $\R^n$.
Adding  general multiplicative (i.e., spatially non-homogenous)
including additive (i.e., spatially homogenous)  white noise
perturbations, we obtain the following It\^o stochastic differential
equations
\begin{equation}\label{sde}
\rmd {x}=V(x)\rmd t+ G(x)\rmd W, \qquad x\in \cal U\subset
\mathbb{R}^n,
\end{equation}
where $W$ is a standard $m$-dimensional Brownian motion for some
integer $m\ge n$, and $G=(g^{ij})_{n\times m}$ is a matrix-valued
function on $\cal U$, called {\it noise coefficient matrix}. For
generality, we assume that $g^{ij} \in W^{1,2{ p}}_{loc}(\cal U)$,
$i=1,2,\cdots,n$, $j=1,2,\cdots,m$, for some fixed constant ${p}
>n$.

The stochastic differential equations \eqref{sde} arise
naturally as a non-isolated physical system subject to noise
perturbations from its surrounding environments, in which the impact
of noises on dynamics is often physically measured in term of
distributions. They can also arise naturally from the study of a
large scale deterministic but seemly stochastic system, for instance
a so-called mesoscopic system which is partially structured but
contains intrinsic uncertainties in a fast time scale due to high
complexity, large degree of freedom, lack of full knowledge of
mechanisms, the need for organizing a large amount of data, etc.
Under some exponential mixing assumptions on the fast dynamics, such
a mesoscopic system can have a stochastic reduction of the form
\eqref{sde} over any finite time interval in which $V$ represents
the structured field and $G$ buries  all dynamical uncertainties
(see e.g., \cite{Just,Mackay}). It has been argued for a mesoscopic
system that in the case of sufficiently high uncertainty,
trajectory-based approach using either deterministic or random
dynamics modeling would not provide much information to its
dynamical description. Instead, a distribution-based approach using
stochastic differential equations like \eqref{sde} is necessary to
adopt in order to synthesize the typical patterns of dynamics (see
\cite{Qian} and references therein).

An important distribution-based approach for studying diffusion
process generated by  \eqref{sde} is to use its associated {\em
Fokker-Planck equation} (also called {\em Kolmogorov forward
equation})
\begin{equation}\label{fp}
\left\{\begin{array}{l}
\displaystyle\frac{\partial u(x,t)}{\partial t}=L_Au(x,t),\quad x\in \cal U,\, t>0, \\\
u(x,t)\ge 0, \quad \int_{\cal U}u(x,t)\rmd x=1,
\end{array}\right.
\end{equation}
where $A= (a^{ij})=\frac{GG^\top}2$, called the {\em diffusion
matrix}, and $L_A$ is the Fokker-Planck operator defined as
\[
L_Ag(x)= \partial^2_{ij}(a^{ij}(x)g(x))-
\partial_i(V^i(x)g(x)),\qquad g\in C^2(\cal U).
\]
We note that $a^{ij}\in W^{1,{p}}_{loc}(\cal U)$,
$i,j=1,2,\cdots,n$. It is well-known that if the stochastic
differential equation \eqref{sde} generates a (local) diffusion
process in $\cal U$ (e.g., when both $V$ and $G$ are locally
Lipschitz in $\cal U$), then its
 transition probability density function,
 if exists,  is actually a (local) fundamental solution of the
Fokker-Planck equation  \eqref{fp}.

In the above and also through the rest of the paper, we  use short
notions
$\partial_i =\frac{\partial }{\partial x_i}$,
$\partial^2_{ij}=\frac{\partial^2 }{\partial x_i
\partial x_j}$, and we also adopt the usual summation convention on $i,j=1,2,\cdots,n$
whenever applicable.

Long time behaviors of solutions of the Fokker-Planck equation
\eqref{fp} is governed by  the stationary Fokker-Planck equation
\begin{equation}\label{p2fk}
\left\{\begin{array}{l} L_A u =
\partial^2_{ij}(a^{ij}u)-\partial_i(V^i u)=0,\\
u(x)\ge 0, \quad \int_{\cal U}u(x)\rmd x=1,
\end{array}\right.
\end{equation}
 which, in the
weak form, becomes
\begin{equation}\label{j3} \left\{\begin{array}{l}\di_{\cal
U}{\cal L}_Af(x) u(x) \rmd x=0,\quad\quad
\hbox{for all } f\in C_0^\infty(\cal U),\\
u(x)\ge 0, \quad \int_{\cal U}u(x)\rmd x=1,
\end{array}\right.
\end{equation}
where $C_0^\infty(\cal U)$ denotes the space of $C^\infty$ functions
on $\cal U$ with compact supports and
\[
{\cal L}_A =a^{ij}\partial^2_{ij}  +V^i\partial_i
\]
is the adjoint Fokker-Planck operator corresponding to $A$.
Solutions of \eqref{j3} are called {\em weak stationary solutions of
\eqref{fp}} or {\em stationary solutions corresponding to ${\cal
L}_A$}. More generally, one considers a measure-valued stationary
solution $\mu_A$ of the Fokker-Planck equation \eqref{fp}, called a
{\em stationary measure of the Fokker-Planck equation \eqref{fp}} or
a {\em stationary measure corresponding to ${\cal L}_A$}, which is a
Borel probability measure satisfying
\begin{eqnarray}
 \di_{\cal U}{\cal L}_Af(x) \rmd\mu_A(x)=0,\quad\quad \hbox{for all  }
f\in C_0^\infty(\cal U).\label{j2}
\end{eqnarray}
If a stationary measure $\mu_A$ is regular, i.e.,
$\rmd\mu_A(x)=u_A(x)\rmd x$ for some density function $u_A\in C(\cal
U)$, then it is clear that $u_A$ is necessarily a weak stationary
solution of \eqref{fp}, i.e, it satisfies \eqref{j3}. Conversely,
 according to the regularity theorem in
\cite{BKR}, if $(a^{ij})$ is everywhere positive definite in $\cal
U$, then any stationary measure corresponding to ${\cal L}_A$  must
be regular with positive density function lying in $
W^{1,{p}}_{loc}(\cal U)$.  In the case that \eqref{sde} generates a
diffusion process on $\cal U$,  it is well-known that any invariant
measure of the diffusion process is necessarily a stationary measure
of the Fokker-Planck equation \eqref{fp},  but the converse need not
be true. However, under some mild conditions a stationary measure of
the Fokker-Planck equation \eqref{fp} is always a sub-invariant
measure of some generalized diffusion process (see \cite{BKR09,BRS1,
BRS} for  discussions in this regard in particular with respect to
the uniqueness of stationary measures and their invariance). In this
sense, stationary measures of \eqref{fp} may be regarded as
generalizations of invariant measures of a classical diffusion
process.

The existence of stationary measures of Fokker-Planck equations  in
$\R^n$ has been extensively investigated (see e.g.,
\cite{ABG},\cite{BKR}-\cite{BRS2},\cite{Ha},\cite{Ver97} and
references therein). In our recent work \cite{JM1}, such existence
is  investigated for a general domain  under certain relaxed
Lyapunov conditions. In addition, results concerning non-existence
of stationary measures of Fokker-Planck equations in a general
domain are also obtained in our work \cite{JM2} under some
anti-Lyapunov conditions, which, together with the existence results
in \cite{JM1}, lead to both sufficient and necessary conditions for
the existence of stationary measures of Fokker-Planck equations.

While the non-existence of stationary measures of Fokker-Planck
equations associated with a family of noise perturbations reflects a
strong stochastic instability of the unperturbed deterministic
system with respect to these noises, stochastic stability and
instability of the deterministic system at a dynamics level can
often occur with respect to noise families for which stationary
measures do exist for the corresponding Fokker-Planck equations. In
order to study  the impact of noises on such stochastic stability of
the deterministic system  when it generates a local flow, a
fundamental problem is to  classify basic dynamics subjects like
compact invariant sets and invariant measures of the local  flow
that can ``survive" from a given family of noise perturbations. This
is in fact our main motivation for the present study.

To be more precise, for a fixed drift field $V\in C(\cal U,\R^n)$,
we consider noise coefficients matrices lying in the class
\begin{eqnarray*}
\tilde{\cal G}=
 \{ G=(g^{ij}): {\rm Rank}( G)\equiv n,\, g^{ij} \in W^{1,2{p}}_{loc}(\cal
 U),\,i=1,2,\cdots,n,\,j=1,2,\cdots,m\}
 \end{eqnarray*}
for some fixed ${p}>n$.  The class $ \tilde{\cal G}$ gives rise to
the following class of  diffusion matrices:
\begin{equation}\label{admissible}
\tilde{\cal A}=\{A=(a^{ij})\in W^{1,{p}}_{loc}(\cal U,
GL(n,\mathbb{R})): \hbox{ $A=\frac{GG^\top}{2}$\hbox{ for some
}$G\in \tilde{\cal G}$}\}.
\end{equation}
To consider small noise perturbations, we will pay particular
attention to the so-called  {\em null family} (resp. {\em bounded
null family}) $\cal A=\{A_\alpha\}\subset \tilde{\cal A}$, i.e., a
directed net of $\tilde{\cal A}$ which converges to $0$ - the zero
matrix, under the topology of $W^{1,{p}}$-convergence on any compact
subsets of $\cal U$ (resp. $L^\infty$-convergence on $\cal U$). Then
with respect to the system \eqref{ode} and a given null family $\cal
A=\{A_\alpha\}\subset \tilde{\cal A}$, our study amounts to the
characterization of behaviors of {\em $\mathcal{A}$-limit measures},
i.e., sequential limit points, as $A_\alpha\to 0$, of stationary
measures $\{\mu_{\alpha}\}$ corresponding to $\{{\cal
L}_{A_\alpha}\}$ under the weak$^*$-topology. Our particular
attention will be paid to issues such as invariance and
concentration of $\cal A$-limit measures, stochastic stability of
compact invariant sets under uniform Lyapunov conditions,  and the
role played by the multiplicative noises $\cal A$ to the
stabilization of a strong local attractor or de-stabilization of a
strong local repeller.

Our main results of the paper are as follows.
\medskip

\noindent{\bf Theorem~A.} {\em Let $\cal A=\{A_\alpha\}\subset
\tilde{\cal A}$ be a null family. Then the following holds.
\begin{itemize}
\item[{\rm a)}] {\rm (Invariance of limit measures)} If $V\in C^1(\cal
U,\R^n)$ and  $\varphi^t$ is a flow on $\cal U$, then any
$\mathcal{A}$-limit measure must be an invariant measure of
$\varphi^t$.

\item[{\rm b)}] {\rm (Stochastic LaSalle  invariance principle)} If \eqref{ode} admits  an
entire weak Lyapunov {\rm(}resp. anti-Lyapunov{\rm )} function
$U_0$, then any $\mathcal{A}$-limit measure with compact support  is
concentrated on the set $S_0=\{x\in \cal U: V(x)\cdot \nabla
U_0(x)=0\}$.
\item[{\rm c)}] {\rm (Local concentration of limit measures)} If $V\in C^1(\cal U,\R^n)$ and $\cal
E_0$ is either a strong local attractor or a strong local repeller
of $\varphi^t$, then there is a neighborhood $\cal W_0$ of $\cal
E_0$ in $\cal U$ such that $\mu(\cal W_0\setminus \cal E_0)=0$ for
any $\mathcal{A}$-limit measure $\mu$.
\end{itemize}
} \medskip

 Thus,  when both $V$ and
$A_\alpha'$s are smooth in $\cal U$, it follows from part a) of
Theorem~A that the stationary measures $\{\mu_{\alpha}\}$
corresponding to $\{{\cal L}_{A_\alpha}\}$, by having smooth density
functions, can be regarded as smoothers of their limit invariant
measures.

We note that parts b), c) of Theorem~A do not require  $\varphi^t$
be a flow on $\cal U$, in which case there is no guarantee that an
$\cal A$-limit measure is an invariant measure of $\varphi^t$.
However, in the case that $\varphi^t$ is only a semiflow on $\cal
U$,  the invariance  of an $\cal A$-limit measure can be also shown
under certain conditions (see Theorem~\ref{tight0-1} c) and Theorem
B below).

Given a null family $\cal A=\{A_\alpha\}\subset \tilde{\cal A}$, if
there exists a uniform Lyapunov function in $\cal U$  with respect
to the family $\{{\cal L}_{A_\alpha}\}$ (see Section 2), then there
must exist a stationary measure $\mu_{\alpha}$ corresponding to each
${\cal L}_{A_\alpha}$ (Proposition \ref{exist}) and $\varphi^t$ is
necessarily a dissipative semiflow on $\cal U$
(Proposition~\ref{UNIF}) whose global attractor necessarily admits a
flow extension (Proposition~\ref{omega-set1}).
\medskip

\noindent{\bf Theorem~B.} {\em Let $\cal A=\{A_\alpha\}\subset
\tilde{\cal A}$ be a null family.  Assume that there is a uniform
Lyapunov function in $\cal U$ with respect to the family $\{{\cal
L}_{A_\alpha}\}$ and denote by $\{\mu_{\alpha}\}$ the set of all
stationary measures corresponding to $\{{\cal L}_{A_\alpha}\}$. Then
the following holds.
\begin{itemize}
\item[{\rm a)}] {\rm (Tightness of stationary measures)} $\{\mu_{\alpha}\}$
is $\cal A$-sequentially null compact in $M(\cal U)$ - the space of
Borel probability measures on $\cal U$, i.e., for any sequence
$\{A_\ell\}_{\ell=1}^\infty\subset \mathcal{A}$ with
$A_\ell\rightarrow 0$, $\{\mu_{\ell}\}_{\ell=1}^\infty$ is
relatively  compact in $M(\mathcal{U})$.
\item[{\rm b)}] {\rm (Global concentration of limit measures)} If $V\in C^1(\cal U,\R^n)$,
then any $\mathcal{A}$-limit measure is an invariant measure of
$\varphi^t$  concentrated on the global attractor of $\varphi^t$.

\end{itemize}
}
\medskip

Concentration of $\mathcal{A}$-limit measures  on a compact
invariant set of $\varphi^t$ is a primary feature of stochastic
stability of the invariant set  with respect to the noise family
$\cal A$ - the so-called {\em $\cal A$-stability} which we will
define in Section~2. For instance, Theorem~B  actually imply that
the global attractor of $\varphi^t$ is $\cal A$-stable
if $\cal A$ is a so-called invariant null family.
Such $\cal A$-stability
  is also closely related to  the so-called $\cal A$-stability of an invariant measure of $\varphi^t$. We refer the
reader to Sections~2,~3 for more discussions in these regards.

Multiplicative noises actually play important roles in stabilizing a
local attractor  or de-stabilizing a local repeller of $\varphi^t$.
To analyze such roles played by multiplicative noises, we assume
that \eqref{ode} admits a Lyapunov function in $\cal U$ whose second
derivatives are bounded. This Lyapunov function then becomes a
uniform Lyapunov function with respect to a bounded null family in
$\tilde{\cal A}$ (Proposition~\ref{UNIF-S}) for which Theorem~B is
applicable.
\medskip

\noindent{\bf Theorem~C.} {\em Assume  $V\in C^1(\cal U,\R^n)$ and
that \eqref{ode} admits a Lyapunov function in $\cal U$ whose second
derivatives are bounded. Then the following holds.
\begin{itemize}
\item[{\rm a)}] {\rm  (Noise stabilization of local attractors)} If the global attractor of $\varphi^t$
contains a strong local attractor, then there exists a normal null
family $\cal A=\{A_\alpha\}\subset \tilde{\cal A}$  such that
stationary measures  corresponding to $\{{\cal L}_{A_\alpha}\}$
exist, form an $\cal A$-sequentially null compact set in $M(\cal
U)$, and any $\mathcal{A}$-limit measure is an invariant measure of
$\varphi^t$ concentrated on the local attractor.
\item[{\rm b)}] {\rm (Noise de-stabilization  of local repellers)} If the global attractor of $\varphi^t$
contains a strong local  repeller, then there exists a normal null
family $\cal A=\{A_\alpha\}\subset \tilde{\cal A}$  such that
stationary measures  corresponding to $\{{\cal L}_{A_\alpha}\}$
exist, form an $\cal A$-sequentially null compact set in $M(\cal
U)$, but all $\mathcal{A}$-limit measures are concentrated away from
the local repeller.
\item[{\rm c)}] {\rm (Noise instability of  repelling
equilibria)} If the global attractor of $\varphi^t$ contains a
strongly  repelling equilibrium, then with respect to any normal
null family  $\cal A=\{A_\alpha\}\subset \tilde{\cal A}$ on
$\mathcal{U}$, stationary measures  corresponding to $\{{\cal
L}_{A_\alpha}\}$ exist, form an $\cal A$-sequentially null compact
set in $M(\cal U)$, but all $\mathcal{A}$-limit measures are
concentrated away from the equilibrium.
\end{itemize}
}
\medskip

In the above theorem, a normal null family is a more restricted
bounded null family defined in Section~2. For definitions of a
strong local attractor or repeller and strongly repelling
equilibrium of \eqref{ode}, we refer the reader to Section~6.

The proof of Theorem~C actually contains explicit conditions on the
noise family $\cal A$ under which a) or b) above holds. For
instance, in the case of a), these conditions actually quantify how
strong the noises should be away from the local attractor in order
to achieve the desired stabilization. We refer the reader to
Section~4 for details.

We note that Theorem~C  can actually be re-stated as such that the
strong local attractor in a) is $\cal A$-stable with respect to some
invariant, bounded null family $\cal A$, the strong local repeller
in b) is strongly $\cal A$-unstable with respect to some invariant,
bounded null family $\cal A$, and the repelling equilibrium in c) is
strongly $\cal A$-unstable with respect to any invariant, normal
null family $\cal A$.

This paper is organized as follows. Section 2 is a preliminary
section in which we mainly review some fundamental properties of
stationary measures of  Fokker-Planck equations associated with
\eqref{sde} and introduce various notions of stochastic stability
for compact invariant sets and invariant measures of $\varphi^t$. In
Section~3, we study behaviors and concentration of limit measures.
Possible local concentration of a limit measure on a strong local
attractor or repeller will  be studied and invariance or
semi-invariance of limit measures will be discussed based on some
new characterizations on invariant and semi-invariant measures of
$\varphi^t$. Theorems~A,~B will be proved in this section. In
Section~4, we prove Theorem~C by characterizing noise families that
stabilize a strong local attractor or de-stabilize  a strong local
repeller. In Section~5, we demonstrate applications of our main
results by considering an example of stochastic Hope bifurcation in
which the existence of a  stochastically stable cycle is observed.
Section 6 is an Appendix in which we  summarize some basic notions
and dynamical properties for a system of dissipative ordinary
differential equations.

We remark that if the system \eqref{ode} is defined on $\cal U\times
M$, where $M$ is a smooth,  compact manifold without boundary (e.g.
$M=\T^k$,  the $k$-torus), then one can modify the definitions of
uniform Lyapunov and  anti-Lyapunov  functions  by replacing the
domain $\cal U\subset \R^n$ with $\cal U\times M$ so that all
results in Sections~3,~4  hold with respect to such a generalized
domain.

Through the rest of the paper, for simplicity, we will use the same
symbol $|\,\cdot|$ to denote the absolute value of a number, the
norm of a vector, a matrix, and the Lebesgue measure of a set. For
any connected open set $\Omega\subset\cal U$ and any integer $0\le
M<\infty$ or $M=\infty$, we denote by $C_0^M(\Omega)$  the set of
functions in $C^M(\Omega)$ with compact supports in $\Omega$ and by
$C_b^M(\Omega)$ the set of functions with bounded derivatives up to
order $M$. When $M=0$, we denote $C^0_0(\Omega),C^0_b(\Omega)$
simply by $C_0(\Omega),C_b(\Omega)$ respectively.

\section{Preliminary}

In this  section, we will review some fundamental properties of
stationary measures of the Fokker-Planck equation associated with
\eqref{sde} from \cite{JM,JM1} and define various notions of
stochastic stability for compact invariant sets and invariant
measures. We will also  recall  a Harnack inequality to be used in
later sections.

\subsection{Lyapunov function and  stationary measures}
Let $A=(a^{ij})$  be a given everywhere positive definite, ${n\times
n}$ matrix-valued function on $\cal U$ such that $a^{ij}\in
W^{1,{p}}_{loc}(\cal U)$, $i,j=1,2,\cdots, n$. The matrix-velued
function corresponds to an adjoint Fokker-Planck operator:
\[
{\cal L}_{A}=a^{ij}\partial^2_{ij}  +V^i\partial_i
\]
which defines a  weak form of stationary Fokker-Planck equation
\eqref{j3}.

Let $\Omega$ be a connected open subset of $\cal U$.  We recall from
 \cite{JM,JM1} that a non-negative function $U\in C(\Omega)$ is a {\em
compact function} if i) $U(x)<\rho_M$, $x\in \Omega$; and ii)
$\lim_{x\to
\partial{\Omega}} U(x)=\rho_M$, where
  $\rho_M= \sup_{x\in \Omega} U(x)$ is called  the {\em
 essential upper bound of $U$}. For each $\rho\in  [0,\rho_M)$, we
 denote $\Omega_\rho=\{x\in \Omega: U(x)<\rho\}$
 as the $\rho$-sublevel set of $U$ and $U^{-1}(\rho)=\{x\in \Omega: U(x)=\rho\}$ as the
 $\rho$-level set of $U$. In the above, the notion $\partial\Omega$ and limit $x\to\partial\Omega$
 are defined through a unified
topology which identifies the extended Euclidean space
$\E^n=\R^n\cup
\partial \R^n$ with the closed unit ball
 $\bar \B^n=\B^n\cup \partial \B^n$ in $\R^n$ so that $\partial
 \R^n$, consisting of infinity elements  of all rays, is identified
 with
$\partial \B^n=\S^{n-1}$ (see \cite{JM,JM1} for details). Therefore,
when $\Omega=\R^n$, the limit $x\to\partial\R^n$  is simply
equivalent to $x\to\infty$.
\medskip

We also recall from \cite{JM}-\cite{JM2} the following notions of
Lyapunov-like and anti-Lyapunov-like functions.

 \begin{Definition}\label{Definition120} {\em A $C^2$ compact function
$U$  is called a {\em Lyapunov function} (resp. {\em  anti-Lyapunov
function}) {\em in $\Omega$  with respect to  ${\cal L}_A$},  if
there is a $\rho_m\in (0,\rho_M)$, called {\em essential lower bound
of $U$}, and a constant $\gamma>0$, called {\em Lyapunov constant}
(resp. {\em  anti-Lyapunov constant}) of $U$, such that
\begin{equation}\label{U3}
{\cal L}_A U(x)\leq -\gamma, \quad\quad\,\hbox{(resp.}\, \ge
\gamma{\rm)},\;\;\, x\in \tilde{\Omega}=\Omega\setminus
\bar\Omega_{\rho_m},
\end{equation}
where $\tilde{\Omega}$ is called {\em essential domain of $U$}. If
$\gamma=0$ in the above, then $U$ is referred to as a {\em weak
Lyapunov function} (resp. {\em weak anti-Lyapunov function}) in
$\Omega$  with respect to  ${\cal L}_A$. }
\end{Definition}

\begin{Proposition}\label{exist} {\rm (}\cite[Theorem A]{JM1}{\rm )}
If  there is a Lyapunov function in $\cal U$ with respect to ${\cal
L}_{A}$, then  a stationary measure $\mu_{A}$ corresponding to
${\cal L}_{A}$ exists and is regular in the sense that  ${\rm
d}\mu_{A}( x)=u_{A}(x){\rmd} x$ for some weak stationary solution
$u_{A}\in W^{1,{p}}_{loc}(\cal U)$ corresponding to ${\cal L}_{A}$.
Moreover, if the Lyapunov function is unbounded, then the stationary
measure is unique.
\end{Proposition}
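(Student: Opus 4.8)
\textbf{Proof proposal for Proposition~\ref{exist}.}
The plan is to reduce the statement to the existence results already contained in \cite{JM1} and the regularity theory of \cite{BKR}, so the main work is to set up the right auxiliary objects and then quote. First I would recall that the hypothesis gives a $C^2$ compact function $U$ on $\cal U$ with essential upper bound $\rho_M$, essential lower bound $\rho_m\in(0,\rho_M)$, and a Lyapunov constant $\gamma>0$ such that ${\cal L}_A U\le -\gamma$ on $\tilde{\cal U}=\cal U\setminus\bar{\cal U}_{\rho_m}$. The key analytic device is to integrate this differential inequality against the putative stationary density over the sublevel sets ${\cal U}_\rho$ and let $\rho\uparrow\rho_M$: the compactness of $U$ forces the sublevel sets to exhaust $\cal U$, so the Lyapunov drift condition yields a uniform a priori bound preventing mass from escaping to $\partial\cal U$. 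Concretely, one works with a sequence of smooth cutoff functions $f_k\in C_0^\infty(\cal U)$ that equal $1$ on ${\cal U}_{\rho_k}$ for $\rho_k\uparrow\rho_M$, tests the weak stationary equation \eqref{j2} against $f_k\cdot(\text{something built from }U)$, and uses ${\cal L}_A U\le-\gamma$ to close the estimate. This is precisely the scheme carried out in \cite[Theorem A]{JM1}, so the honest approach is to verify that the present hypotheses match the hypotheses there verbatim and invoke it.

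Second, for the construction of the stationary measure itself (as opposed to the tightness estimate), I would follow the standard route: solve the stationary Fokker--Planck equation on an exhausting sequence of bounded smooth subdomains $\Omega_k\Subset\cal U$ with, say, reflecting or absorbing-type boundary conditions, obtaining probability measures $\mu_k$; the Lyapunov bound from the previous paragraph gives tightness of $\{\mu_k\}$ on $\cal U$ (no mass escapes because $\int U\,d\mu_k$ stays bounded and $U\to\rho_M$ at $\partial\cal U$); extract a weak$^*$-limit $\mu_A$; and check that $\mu_A$ still satisfies \eqref{j2} by passing to the limit in the weak formulation, using that each $f\in C_0^\infty(\cal U)$ is supported in some $\Omega_k$ for $k$ large. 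Again this is the content of \cite[Theorem A]{JM1} and I would cite it rather than reproduce it.

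Third, for regularity: since $A=(a^{ij})$ is everywhere positive definite on $\cal U$ with $a^{ij}\in W^{1,p}_{loc}(\cal U)$ and $V\in C(\cal U,\R^n)\subset L^p_{loc}$, the hypotheses of the regularity theorem of \cite{BKR} (quoted in the introduction of this paper) are met, so $\mu_A$ is automatically regular: ${\rm d}\mu_A(x)=u_A(x)\,{\rm d}x$ with $u_A\in W^{1,p}_{loc}(\cal U)$ and $u_A>0$, and $u_A$ solves \eqref{j3}. This part is a direct citation.

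Finally, the uniqueness claim under the additional assumption that $U$ is unbounded (i.e.\ $\rho_M=+\infty$): here I would argue that if $\mu_A$ and $\nu_A$ were two distinct stationary measures corresponding to ${\cal L}_A$, their signed difference $\sigma=\mu_A-\nu_A$ would be a signed measure annihilating $\{{\cal L}_A f: f\in C_0^\infty\}$; using the positive definiteness of $A$ one writes the associated diffusion as genuinely elliptic and appeals to the irreducibility/Harnack machinery (the Harnack inequality recalled in Section~2) together with the fact that an unbounded Lyapunov function gives a moment bound $\int U\,d\mu<\infty$ strong enough to rule out the ``extra'' invariant measures that can appear for merely bounded Lyapunov functions — this is the standard Khasminskii-type uniqueness argument, and it is exactly the last assertion of \cite[Theorem A]{JM1}. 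I expect the main obstacle, were one to write this out in full rather than cite, to be the tightness step: making the cutoff-and-integrate argument rigorous when $U$ is only $C^2$ and $A$ only $W^{1,p}_{loc}$ requires care with the integration by parts in the weak formulation and with the behavior near $\partial\cal U$ under the abstract boundary topology of \cite{JM,JM1}; everything else is either a routine limiting argument or a direct quotation of \cite{BKR} and \cite{JM1}.
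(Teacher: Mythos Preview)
Your proposal is correct in spirit, but note that the paper gives no proof of this proposition at all: the citation \cite[Theorem A]{JM1} appears in the statement itself, and the proposition is simply quoted as an external result with no accompanying argument. Your sketch of the Lyapunov--tightness--exhaustion scheme, the \cite{BKR} regularity step, and the Khasminskii-type uniqueness is a reasonable outline of what the proof in \cite{JM1} presumably contains, but for the purposes of this paper the entire ``proof'' is the citation, and you have already identified that this is the honest approach.
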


Below, we recall from \cite{JM,JM1} some  measure estimates which
are derived based on the level set method introduced in these works.
In fact, these estimates only require the weaker regularity
condition on the drift field $V$ that $V^i\in L^{{ p}}_{\rm
loc}(\cal U)$, $i=1,\cdots,n$.

\begin{Proposition}\label{unif}
Assume that there is  a Lyapunov function $U$ in
 $\Omega$ with respect to ${\cal L}_A$, with Lyapunov constant $\gamma$,
 essential lower bound $\rho_m$ and upper bound $\rho_M$. Let
$\Omega_\rho$  denote the $\rho$-sublevel set of $U$ for each
$\rho\in [\rho_m,\rho_M)$. Then the following holds for any
stationary measure $\mu$ corresponding to ${\cal L}_A$:
\begin{itemize}
\item[{\rm a)}]{\rm(}\cite[Lemma 4.1]{JM1}{\rm)} For  any $\rho_0\in
(\rho_m,\rho_M)$, there exists a constant $C_{\rho_m,\rho_0}>0$
depending only on $\rho_m,\rho_0$ such that
\[
 \mu({\cal U}\setminus
\Omega_{\rho_0})\le \gamma^{-1}
C_{\rho_m,\rho_0}|A|_{C(\Omega_{\rho_0}\setminus \Omega_{\rho_m})}
|\nabla U|_{C(\Omega_{\rho_0}\setminus
\Omega_{\rho_m})}^2\mu(\Omega_{\rho_0}\setminus \Omega_{\rho_m}).
\]

\item[{\rm b)}] {\rm(}\cite[Theorem A b)]{JM}{\rm)}  If, in addition,
\begin{eqnarray}
&& \nabla U(x)\ne 0,\,\; \quad\forall x\in U^{-1}(\rho)\;\, \, {\rm
for }\;\, a.e. \;\, \rho\in [\rho_m,\rho_M),\label{t1} \\
 && a^{ij}(x)\partial_i
U(x)\partial_j U(x)\le H(\rho),\qquad x\in
\partial\Omega_\rho,\;\;\rho\in [\rho_m,\rho_M)\label{t2}
\end{eqnarray}
for some  non-negative measurable function $H$ defined on
$[\rho_m,\rho_M)$, then
$$\mu({\cal U}\setminus  \Omega_{\rho})\le
\rme^{- \gamma\int_{\rho_m}^{\rho} \frac {1}{H(t)}\rmd t},\qquad
\rho\in [\rho_m,\rho_M).
$$
\end{itemize}
\end{Proposition}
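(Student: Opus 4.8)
The plan is to deduce both estimates from a single identity, obtained by testing the stationary equation against a function of $U$, after which a) and b) come from two different choices of that function. First I would reduce to a density: since the standing hypothesis makes $(a^{ij})$ everywhere positive definite, the regularity theorem recalled after \eqref{j2} gives $\rmd\mu=u\,\rmd x$ with $0<u\in W^{1,p}_{loc}(\cal U)$, and the weak equation $\int_{\cal U}{\cal L}_A f\,u\,\rmd x=0$ then extends from $f\in C_0^\infty(\cal U)$ to $f\in C_0^2(\cal U)$. For a $C^2$ profile $\psi$ on $[0,\rho_M)$, testing against $f=\psi(U)$ and using $\partial_i(\psi(U))=\psi'(U)\partial_iU$, $\partial^2_{ij}(\psi(U))=\psi''(U)\partial_iU\partial_jU+\psi'(U)\partial^2_{ij}U$, one gets ${\cal L}_A(\psi(U))=\psi''(U)\,q+\psi'(U)\,{\cal L}_AU$ with $q:=a^{ij}\partial_iU\partial_jU\ge 0$, hence
\begin{equation}\label{e:pf1}
\int_\Omega \psi''(U)\,q\,u\,\rmd x=-\int_\Omega \psi'(U)\,{\cal L}_AU\,u\,\rmd x .
\end{equation}
Since $\psi(U)$ will in general not be compactly supported, I would apply this first to $\psi(U)\phi_R$ for smooth cutoffs $\phi_R$ subordinate to the sublevel sets $\Omega_\rho$ and pass to the limit $R\to\infty$ (the delicate point, see below). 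Specializing to profiles with $\psi'\ge0$ and $\psi'\equiv0$ on $[0,\rho_m]$ and using $-{\cal L}_AU\ge\gamma$ on $\{U>\rho_m\}\subseteq\tilde\Omega$, \eqref{e:pf1} yields the master inequality
\begin{equation}\label{e:pf2}
\int_\Omega \psi''(U)\,q\,u\,\rmd x\ \ge\ \gamma\int_\Omega \psi'(U)\,u\,\rmd x .
\end{equation}

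For a), I would fix $\rho_0\in(\rho_m,\rho_M)$ and take $\psi$ with $\psi'\equiv0$ on $[0,\rho_m]$, $\psi'$ rising monotonically from $0$ to $1$ across $[\rho_m,\rho_0]$ with $|\psi''|\le 2/(\rho_0-\rho_m)$, and $\psi'\equiv1$ on $[\rho_0,\rho_M)$. Then in \eqref{e:pf2} the left integrand is supported on $\{\rho_m<U<\rho_0\}\subseteq\Omega_{\rho_0}\setminus\Omega_{\rho_m}$, where $q\le|A|\,|\nabla U|^2$, so the left side is at most $\tfrac{2}{\rho_0-\rho_m}|A|_{C(\Omega_{\rho_0}\setminus\Omega_{\rho_m})}|\nabla U|_{C(\Omega_{\rho_0}\setminus\Omega_{\rho_m})}^2\,\mu(\Omega_{\rho_0}\setminus\Omega_{\rho_m})$, while the right side is at least $\gamma\int_{\{U\ge\rho_0\}}\psi'(U)u\,\rmd x=\gamma\,\mu(\cal U\setminus\Omega_{\rho_0})$ (here one also uses $\mu(\cal U\setminus\Omega)=0$, which falls out of the same cutoff argument). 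Rearranging gives a) with $C_{\rho_m,\rho_0}=2/(\rho_0-\rho_m)$, any dimensional constant from $a^{ij}\xi_i\xi_j\le|A|\,|\xi|^2$ being absorbed.

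For b), I would assume in addition \eqref{t1}--\eqref{t2} and set $P(\rho):=\mu(\cal U\setminus\Omega_\rho)$ on $[\rho_m,\rho_M)$. By \eqref{t1} the coarea formula applies to $U$, so $U_*\mu$ has Lebesgue density $p(t)=\int_{U^{-1}(t)}u/|\nabla U|\,\rmd\sigma$; hence $P$ is absolutely continuous, non-increasing, with $P(\rho_M^-)=0$, $P(\rho_m)\le1$, $P'=-p$ a.e. Fixing a Lebesgue point $\rho\in(\rho_m,\rho_M)$ of $t\mapsto H(t)p(t)$ and taking $\psi=\psi_\vep$ with $\psi_\vep'\equiv0$ on $[0,\rho]$ and $\psi_\vep'$ a smoothed step from $0$ to $1$ across $[\rho,\rho+\vep]$ (so $\psi_\vep''\ge0$ is a unit-mass bump supported there), \eqref{e:pf2} gives: right side $\ge\gamma\int_{\{U>\rho+\vep\}}u\,\rmd x=\gamma P(\rho+\vep)$, and, via coarea and \eqref{t2} ($q\le H(t)$ on $U^{-1}(t)$), left side $\le\int_\rho^{\rho+\vep}\psi_\vep''(t)H(t)p(t)\,\rmd t$. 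Letting $\vep\downarrow0$ I obtain $\gamma P(\rho)\le-H(\rho)P'(\rho)$ for a.e. $\rho$, that is $\big(P(\rho)\,\rme^{\gamma\int_{\rho_m}^{\rho}\rmd s/H(s)}\big)'\le0$; integrating from $\rho_m$ and using $P(\rho_m)\le1$ yields $P(\rho)\le\rme^{-\gamma\int_{\rho_m}^{\rho}\rmd t/H(t)}$, which is b).

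The hard part is everything hidden in the phrase ``pass to the limit $R\to\infty$'': justifying \eqref{e:pf1} for the non-compactly supported $\psi(U)$, and the fact $\mu(\cal U\setminus\Omega)=0$. One must show the cutoff commutators $\psi(U){\cal L}_A\phi_R+2a^{ij}\partial_i(\psi(U))\partial_j\phi_R$ integrate to zero in the limit, which uses the integrability of $u$ (equivalently $\mu(\Omega\setminus\Omega_\rho)\to0$ as $\rho\to\rho_M$) together with the sign bound ${\cal L}_AU\le-\gamma$ near $\partial\Omega$; it is also the reason only $V^i\in L^p_{loc}$ is needed. Once this is in place, the choices of profile, the coarea bookkeeping under \eqref{t1}--\eqref{t2}, and the Gronwall step are routine; the full argument is the ``level set method'' of \cite{JM,JM1}.
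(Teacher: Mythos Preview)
The paper does not actually prove this proposition: both parts are stated with citations to \cite[Lemma~4.1]{JM1} and \cite[Theorem~A~b)]{JM} and no proof is given in the present paper. Your proposal is a faithful reconstruction of the ``level set method'' of those references, and the key identity \eqref{e:pf1} obtained by testing against $\psi(U)$, the profile choice for a), and the coarea/Gronwall argument for b) are exactly the ingredients used there. The honest acknowledgement that the cutoff justification (and the companion fact $\mu(\cal U\setminus\Omega)=0$) is the real work is also accurate; this is precisely the content of the integral identity and measure-estimate machinery in \cite{JM,JM1}.
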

\medskip

\begin{Proposition} {\rm(}\cite[Theorem B a)]{JM}{\rm)} \label{aunif} Assume that there is  an
anti-Lyapunov function $U$ in
 $\Omega$ with respect to ${\cal L}_A$, with anti-Lyapunov constant $\gamma$,
 essential lower bound $\rho_m$ and upper bound $\rho_M$. Let
$\Omega_\rho$  denote the $\rho$-sublevel set of $U$ for each
$\rho\in [\rho_m,\rho_M)$. If $U$ satisfies \eqref{t1}, \eqref{t2}
 with respect to
 a non-negative measurable function $H$ defined on
$[\rho_m, \rho_M)$,  then the following holds for any stationary
measure $\mu$ corresponding to ${\cal L}_A$:
$$\mu(\Omega_{\rho}\setminus \Omega^*_{\rho_m})\geq
\mu(\Omega_{\rho_0}\setminus  \Omega^*_{\rho_m})\rme^{\gamma
\int_{\rho_0}^\rho \frac 1{H(t)}\rmd t},\quad\quad \rho\in
(\rho_0,\rho_M),$$ where $\Omega^*_{\rho_m}=\Omega_{\rho_m}\cup
U^{-1}(\rho_m)=\{x\in \cal U: U(x)\le \rho_m\}$.
\end{Proposition}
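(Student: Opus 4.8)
The plan is to mimic the proof of Proposition~\ref{unif}~b) (i.e., \cite[Theorem A b)]{JM}), but run the differential inequality in the opposite direction, exploiting the sign reversal ${\cal L}_A U \ge \gamma$ coming from the anti-Lyapunov condition. First I would set up the basic quantity: for $\rho\in[\rho_m,\rho_M)$ define $m(\rho)=\mu(\Omega_\rho\setminus\Omega^*_{\rho_m})$, which is nondecreasing in $\rho$. The key is to test the stationary measure equation \eqref{j2} against a suitable function built from $U$: concretely, for $\rho_0<\rho$ in $[\rho_m,\rho_M)$ one uses a Lipschitz cutoff $\chi=\chi(U(x))$ that vanishes on $\Omega^*_{\rho_m}$, equals $1$ on a sublevel set, and drops back to $0$ near $\partial\Omega$, and then passes to the limit to obtain a weak form of the inequality on the ``annular'' region $\{\rho_m<U<\rho\}$. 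This is exactly the level-set machinery of \cite{JM,JM1}, so I would cite it rather than redo the approximation argument. Conditions \eqref{t1} and \eqref{t2} are what make the coarea-type computation legitimate: \eqref{t1} guarantees $\nabla U\neq 0$ on a.e.\ level set so that $\partial\Omega_\rho$ is a smooth hypersurface for a.e.\ $\rho$ and the coarea formula applies, while \eqref{t2} furnishes the upper bound $H(\rho)$ on the ``diffusion flux'' $a^{ij}\partial_iU\,\partial_jU$ across $\partial\Omega_\rho$.

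The heart of the argument is a differential inequality for $m(\rho)$. Testing \eqref{j2} against $f=\psi(U)$ with $\psi$ supported in $[\rho_m,\rho]$ and using ${\cal L}_A U\ge\gamma$ produces, after the coarea reduction, something of the schematic shape
\[
\gamma\, m(\rho) \;\le\; \int_{\partial\Omega_\rho} \psi'\,\bigl(a^{ij}\partial_iU\,\partial_jU\bigr)\,\rmd\sigma \;\le\; H(\rho)\,\frac{\rmd}{\rmd\rho}\,\mu(\Omega_\rho\setminus\Omega^*_{\rho_m})
\]
(in the appropriate weak/Stieltjes sense), i.e.
\[
m'(\rho)\;\ge\;\frac{\gamma}{H(\rho)}\,m(\rho)
\]
for a.e.\ $\rho\in(\rho_m,\rho_M)$. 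Integrating this from $\rho_0$ to $\rho$ via Gronwall yields
\[
m(\rho)\;\ge\; m(\rho_0)\,\rme^{\gamma\int_{\rho_0}^{\rho}\frac{1}{H(t)}\,\rmd t},
\]
which is precisely the assertion. One should be a little careful that $m$ need not be absolutely continuous a priori; the clean way is to work with the integrated (weak) inequality $\int_{\rho_0}^{\rho}\gamma\, m(t)\,\rmd t \le \int H(t)\,\rmd m(t)$ directly and apply a Gronwall-type lemma for monotone functions, which is standard.

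The main obstacle I anticipate is not the Gronwall step but the justification of the boundary/flux identity under the low regularity hypotheses ($a^{ij}\in W^{1,p}_{loc}$, $V^i\in L^p_{loc}$, $u\in W^{1,p}_{loc}$ with $p>n$): one must integrate by parts against the non-smooth stationary density and control the level-set integrals for a.e.\ $\rho$. This is exactly where \eqref{t1} is essential and where the constructions of \cite{JM} are invoked; since the analogous computation for the Lyapunov case has already been carried out there (Proposition~\ref{unif}~b)), I would present the anti-Lyapunov version as a parallel argument, emphasizing only the sign change and the reversal of the direction of integration (from $\rho_0$ upward toward $\rho_M$ rather than from $\rho_m$ outward), and refer to \cite{JM} for the technical lemmas on the level-set decomposition.
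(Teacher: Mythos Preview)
The paper does not prove this proposition at all: it is quoted verbatim from \cite[Theorem~B~a)]{JM} and no argument is supplied here. Your outline---defining $m(\rho)=\mu(\Omega_\rho\setminus\Omega^*_{\rho_m})$, testing the stationary equation against functions $\psi(U)$, invoking the level-set/coarea machinery under \eqref{t1}--\eqref{t2} together with ${\cal L}_AU\ge\gamma$ to reach the differential inequality $m'(\rho)\ge\gamma m(\rho)/H(\rho)$, and then integrating via Gronwall---is indeed the method by which this estimate is established in \cite{JM}, and is the exact mirror of the Lyapunov case Proposition~\ref{unif}~b). Your displayed ``schematic'' inequality is not literally correct as written (the surface integral should carry the stationary density $u$ and a factor $|\nabla U|^{-1}$ from the coarea formula, and the stray $\psi'$ does not belong there), but you explicitly label it as heuristic and point to the integral identity of \cite{JM} for the rigorous version, so there is no real gap in the plan.
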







\subsection{Null family} Consider the  class $\tilde{\cal A}$ of  diffusion matrices
  defined in \eqref{admissible}.

\begin{Definition}\label{nullD} {\em 1) A {\em null family} (resp. {\em bounded null family})
$\cal A=\{A_\alpha\}\subset\tilde{\cal A}$  is a directed net of
$\tilde{\cal A}$  which converges to $0$ - the zero matrix, under
the topology of $W^{1,{p}}$-convergence on any compact subsets of
$\cal U$ (resp. $L^\infty$-convergence on $\cal U$).

2) A bounded null family $\{A_\alpha =(a^{ij}_\alpha)\} \subset
\tilde{\cal A}$  is said to be a {\em normal null family} if for any
pre-compact open subset $\Omega$ of $\cal U$,
\begin{equation}\label{normal}
\sup_\alpha\frac{\Lambda_\alpha(\Omega)}{\lambda_\alpha(\Omega)}<\infty,
\end{equation}
 where, for each
$\alpha$,
\[
\Lambda_\alpha(\Omega)=\sup_{x\in \Omega}\Lambda_\alpha(x),\quad
\quad
 \lambda_\alpha(\Omega)=\inf_{x\in
\Omega}\lambda_\alpha(x)
\]
with $\Lambda_\alpha(x)=\sqrt{\sum_{i,j} |a^{ij}_\alpha(x)|^2}$ and
$\lambda_\alpha(x)=\inf_{\xi\in\R^n\setminus\{0\}} \frac{\xi^\top
A_\alpha (x) \xi}{|\xi|^2}$ for each $x\in \Omega$. }
\end{Definition}

We note that in the above   $\lambda_\alpha(x)$ is just the smallest
eigenvalue of $A_\alpha(x)$ and $\Lambda_\alpha(x)$ is strictly
bigger than the largest eigenvalue of $A_\alpha(x)$.

On $\tilde{\cal A}$, the topology of $W^{1,{p}}$-convergence on any
compact subsets of $\cal U$ is metrizable. Through  the rest of the
paper,  we denote an equivalent metric generating this topology by
$d$ and denote the $L^\infty$ norm on the subspace of bounded
elements of $\tilde{\cal A}$ by $|\cdot |$ for short.
\medskip

\begin{Remark}~\label{null} {\em  By Sobolev embedding $W^{1,\bar
p}(\Omega)\hookrightarrow C(\bar\Omega)$ for any pre-compact open
set $\Omega\subset\cal U$  with $C^1$ boundary $\partial\Omega$, if
$\cal A=\{A_\alpha\}\subset\tilde{\cal A}$ is a null family, then
$A_\alpha\to 0$  uniformly   on any compact subsets of $\cal U$.
This is because $\cal U$ can be approximated from inside by
pre-compact open subsets with smooth boundary. }
\end{Remark}

\begin{Definition}\label{def2.3} {\em   Let $\cal A\subset \tilde{\cal
A}$ be a null family.
\begin{itemize}
\item[{\rm 1)}] $\cal A$ is said to be {\em invariant} if for each
$A=\frac{GG^\top}2\in \mathcal{A}$, $\cal U$ is invariant with
respect to the diffusion process corresponding to $G$ or $A$, i.e.,
with probability one, any solution of \eqref{sde} starting in $\cal
U$ remains in $\cal U$ for all positive time of existence.

\item[{\rm 2)}] $\cal A$ is said to be {\em admissible} if for each
$A=\frac{GG^\top}2\in \mathcal{A}$, there exists a stationary
measure corresponding to ${\cal L}_{A}$. \end{itemize} }
\end{Definition}

\begin{Remark}\label{well-post}  {\em Let $\cal A$ be a
null family.

 1) With our assumptions on $V$ and $G\in \tilde{\cal G}$, local
existence of solutions of \eqref{sde} is guaranteed. The invariance
of a null family $\cal A$ only requires that such solutions
corresponding to each $A=\frac{GG^\top}2\in \cal A$ do not escape
$\cal U$ when time evolves, and local uniqueness of them is not even
required.

 2) When $\cal U=\R^n$, it is obvious that any null family $\cal A$
is automatically invariant. When $\cal U$ admits a boundary in
$\R^n$, the  invariance property of a null family $\cal A$ naturally
leads to suitable boundary conditions (on $V$ or $A\in \cal A$ or
both) of reflection, stopping, or general Wentzell types {\rm (}see
\cite{F1,F2,Went}{\rm ) for \eqref{sde} or \eqref{fp}. For instance,
if  \eqref{ode} admits a Lyapunov function in $\cal U$ and each
$A\in \cal A$ satisfies $A(x)\to 0$  as $x\to
\partial{\cal U}$, then it is not hard to see that $\cal A$ is
invariant.

3) For a null family $\cal A$ to be both invariant and admissible,
certain  uniform Lyapunov-like conditions (see below) are usually
needed. When $\cal U$ is bounded, it is true by the theory of
elliptic equations that if each $A\in\cal A$ is positive definite on
$\bar{\cal U}$ then there exists a stationary measure in $\cal U$
corresponding to each ${\cal L}_{A}$
 (\cite{BR01,JM1}). But  such a null family will fail to be
 invariant. To ensure the admissibility of a null family $\cal A$ in the case of degeneracy of
$A\in \cal A$ on $\partial{\cal U}$, uniform Lyapunov-like
conditions are still necessary (see \cite{JM1}). } }
\end{Remark}



 \begin{Definition}\label{Definition1200} {\em Let ${\cal
A}=\{A_\alpha\}\subset \tilde{\cal A}$ be a null family  and
$\Omega\subset \cal U$ be a connected open set.
 A $C^2$ compact function
$U$ is a {\em uniform Lyapunov function} (resp. {\em uniform
anti-Lyapunov function}) {\em  in $\Omega$ with respect to $\cal A$
or $\{{\cal L}_{A_\alpha}\}=:\{{\cal L}_A\}_{A\in\cal A}$ }  if it
is a Lyapunov function (resp. anti-Lyapunov function) in $\Omega$
with respect to each ${\cal L}_{A}$,  $A\in \cal A$, and the
essential lower bound $\rho_m$ and Lyapunov (resp. anti-Lyapunov)
constant $\gamma$, of $U$, are
 independent of $A\in \cal A$.
 }
 \end{Definition}


\medskip

\begin{Remark}\label{rk2.3} {\em  1) It follows immediately from
Proposition~\ref{exist} that if there is a uniform Lyapunov function
in $\cal U$ with respect to a null family $\cal A$, then $\cal A$ is
admissible.

2)  Consider  $A_\epsilon=\epsilon A$, $0<\epsilon\le 1 $,  for a
fixed $A=(a^{ij})\in \tilde{\cal A}$. Then it is clear that
$\{A_\epsilon\}$ is a null family, and in fact a normal
null family if $A$ is uniformly positive definite on $\cal U$.
We note that a uniform Lyapunov function can be easily
obtained with respect to such a null family. Assume that $U$ is a
Lyapunov function in $\Omega\subset\cal U$ with respect to the
operator ${\cal L}_A$ with Lyapunov constant $\gamma$. If $U$ is
quasi-convex (i.e., the Hessian matrix $D^2U$ is positive
semi-definite) near $\partial\Omega$, then
\[
{\cal L}_{A_\epsilon}=\epsilon a^{ij}(x)\partial^2_{ij}U(x)
+V^i(x)\partial_i U(x)\le a^{ij}(x)\partial^2_{ij}U(x)
+V^i(x)\partial_i U(x)\le -\gamma
\]
for all $x\in \Omega$ sufficiently close to $\partial\Omega$, i.e.,
$U$ becomes a uniform Lyapunov function in $\Omega$ with respect to
the family $\{{\cal L}_{A_\epsilon}\}$. }
\end{Remark}

 A uniform Lyapunov (resp. anti-Lyapunov) function with respect to
a null family is actually a Lyapunov (resp. anti-Lyapunov) function
of the system  \eqref{ode}.

\begin{Proposition}\label{UNIF} Suppose that $U$ is a  uniform Lyapunov {\rm (}resp. anti-Lyapunov{\rm )}
function in a connected open set $\Omega\subset \cal U$  with
respect to a null family $\cal A=\{A_\alpha\}\subset \tilde{\cal
A}$. Then $U$ must be a Lyapunov  {\rm (}resp. anti-Lyapunov{\rm )}
function of \eqref{ode} in $\Omega$. Consequently, \eqref{ode}
generates a positive {\rm (}resp. negative{\rm)} semiflow which is
dissipative {\rm (}resp. anti-dissipative{\rm )} in $\Omega$.
\end{Proposition}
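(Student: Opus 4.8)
The plan is to let $\epsilon\to 0$ in the defining inequality of a uniform Lyapunov function and use the $W^{1,p}_{\rm loc}$-convergence $A_\alpha\to 0$ together with the regularity $U\in C^2$. Concretely, since $U$ is a uniform Lyapunov function in $\Omega$ with respect to $\cal A=\{A_\alpha\}$, there exist $\rho_m\in(0,\rho_M)$ and $\gamma>0$, both independent of $\alpha$, such that for every $\alpha$
\[
{\cal L}_{A_\alpha}U(x)=a^{ij}_\alpha(x)\,\partial^2_{ij}U(x)+V^i(x)\,\partial_i U(x)\le -\gamma,\qquad x\in\tilde\Omega=\Omega\setminus\bar\Omega_{\rho_m}.
\]
Fix $x\in\tilde\Omega$. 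The term $a^{ij}_\alpha(x)\partial^2_{ij}U(x)$ is bounded in absolute value by $|A_\alpha(x)|\cdot|D^2U(x)|$ (with the norm conventions of the paper), and by Remark~\ref{null} (Sobolev embedding on pre-compact open sets approximating $\cal U$ from inside) $A_\alpha\to 0$ uniformly on compact subsets of $\cal U$; in particular $a^{ij}_\alpha(x)\to 0$. Since $D^2U$ is finite at the fixed point $x$, the second-order term tends to $0$ along the net. Passing to the limit in the inequality gives $V^i(x)\partial_i U(x)\le-\gamma$, i.e. $V(x)\cdot\nabla U(x)\le-\gamma$ for all $x\in\tilde\Omega$. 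Because $\tilde\Omega$, $\rho_m$, $\rho_M$ and $\gamma$ are exactly the data required in Definition~\ref{Definition120} (with the matrix $A$ replaced by the trivial operator $V^i\partial_i$, which needs no $a^{ij}$), this shows $U$ is a Lyapunov function of \eqref{ode} in $\Omega$. The anti-Lyapunov case is identical with the inequality reversed.

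For the consequence, I would invoke the appendix material (Section~6) on dissipative ordinary differential equations: the existence of a $C^2$ compact Lyapunov function $U$ in $\Omega$ with $V\cdot\nabla U\le-\gamma<0$ on $\tilde\Omega$ forces every forward orbit of \eqref{ode} starting in $\Omega$ to stay in $\Omega$ for all forward time — along such an orbit $U(\varphi^t x)$ is strictly decreasing at rate $\ge\gamma$ while $U$ stays below $\rho_M$, so the orbit cannot reach $\partial\Omega$ (where $U\to\rho_M$) and hence exists for all $t\ge0$ — so that $\varphi^t$ restricts to a positive semiflow on $\Omega$; moreover every orbit enters and is eventually trapped in the pre-compact sublevel set $\bar\Omega_{\rho_m}$, which is precisely dissipativity of the semiflow in $\Omega$ in the sense of Section~6. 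The anti-Lyapunov case yields the analogous statement for the negative (time-reversed) semiflow, which is anti-dissipative.

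The only subtle point is the interchange of limit and the pointwise inequality: one must note that the convergence $A_\alpha\to 0$ provided by the null-family hypothesis is only $W^{1,p}_{\rm loc}$, not pointwise a priori, and that the net is directed rather than a sequence. This is handled cleanly by Remark~\ref{null} (which upgrades $W^{1,p}_{\rm loc}$-convergence to uniform convergence on compacta via Sobolev embedding, valid because $p>n$) and by the fact that a monotone/limiting argument on a directed net only needs the eventual smallness of $|A_\alpha(x)|$ at the single fixed point $x$; no uniformity in $x$ is needed since $\gamma$ is already uniform. I do not expect any real obstacle beyond bookkeeping.
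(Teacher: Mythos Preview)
Your proposal is correct and follows essentially the same route as the paper: write out ${\cal L}_{A_\alpha}U(x)\le -\gamma$ on $\tilde\Omega$, let $A_\alpha\to 0$ (the paper does this in one line, you add the justification via Remark~\ref{null}), and conclude $V\cdot\nabla U\le -\gamma$, then invoke the appendix (the paper cites Proposition~\ref{dissip} directly) for the dissipativity consequence. The only minor slip is that the relevant definition for a Lyapunov function of \eqref{ode} is Definition~\ref{D2} in the appendix rather than Definition~\ref{Definition120}, but you clearly have the right content in mind.
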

\begin{proof} We only prove the case when $U$ is a uniform Lyapunov
function in $\Omega$ with respect to $\{{\cal L}_{A_\alpha}\}$.

 Denote $\rho_m$, respectively $\rho_M$, as the essential
lower, respectively upper, bound of $U$, $\gamma$ as a uniform
Lyapunov constant of $U$,  and $\Omega_{\rho}$ as the
$\rho$-sublevel set of $U$ for each $\rho\in [\rho_m,\rho_M)$. Then
\[
{\cal L}_{A_\alpha}U(x)=a^{ij}_\alpha(x)\partial^2_{ij}U(x)
+V^i(x)\partial_i U(x)\le -\gamma,\qquad x\in \Omega\setminus
\bar\Omega_{\rho_m}.
\]
By taking limit $A_\alpha\to 0$ in the above, we have
\begin{equation}\label{VU}
V(x)\cdot \nabla U(x)\le -\gamma,\qquad x\in \Omega\setminus
\bar\Omega_{\rho_m},
\end{equation}
i.e.,  $U$ is a Lyapunov function of \eqref{ode} in $\Omega$. It
follows from Proposition~\ref{dissip} that \eqref{ode} generates a
positive  semiflow which is dissipative  in $\Omega$.
\end{proof}

Conversely, with respect to a bounded null family, a uniform Lyapunov function
can be naturally obtained from a Lyapunov function of \eqref{ode}.
\medskip

\begin{Proposition}\label{UNIF-S} Let $\Omega\subset \cal U$ be a connected open
set and suppose that \eqref{ode} admits a $C^2$ Lyapunov  {\rm
(}resp. anti-Lyapunov{\rm )}  function $U$ whose second derivatives
are bounded in $\Omega$.  Then $U$  is a uniform Lyapunov {\rm
(}resp. anti-Lyapunov{\rm )} function in $\Omega$ with respect to
any bounded null family $\cal A=\{A_\alpha\}\subset \tilde{\cal A}$
 with $|A_\alpha|\ll 1$.
\end{Proposition}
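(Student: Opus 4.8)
The plan is to verify directly that the defining inequality $\mathcal{L}_{A_\alpha}U \le -\gamma$ (resp. $\ge\gamma$) on the essential domain $\tilde\Omega = \Omega\setminus\bar\Omega_{\rho_m}$ holds for every bounded null family $\mathcal{A}=\{A_\alpha\}$ once $|A_\alpha|$ is sufficiently small, using the same essential lower bound $\rho_m$ and Lyapunov constant $\gamma$ as for the deterministic system. Since $U$ is a Lyapunov function of \eqref{ode} in $\Omega$, there are $\rho_m\in(0,\rho_M)$ and $\gamma>0$ with $V(x)\cdot\nabla U(x)\le -\gamma$ for $x\in\tilde\Omega$. For a diffusion matrix $A=(a^{ij})\in\tilde{\cal A}$ we split
\[
\mathcal{L}_{A}U(x) = a^{ij}(x)\partial^2_{ij}U(x) + V^i(x)\partial_i U(x) \le a^{ij}(x)\partial^2_{ij}U(x) - \gamma,
\]
so it suffices to bound the second-order term $a^{ij}(x)\partial^2_{ij}U(x)$ uniformly in $x\in\tilde\Omega$ by a quantity that can be made, say, $\le \gamma/2$ after shrinking $|A|$. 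By hypothesis the second derivatives $\partial^2_{ij}U$ are bounded on $\Omega$, so there is a constant $K=|D^2U|_{C(\Omega)}<\infty$ with $|a^{ij}(x)\partial^2_{ij}U(x)| \le n^2 K\,|A(x)| \le n^2 K\,|A|$ for every $x\in\Omega$, where $|A|$ is the $L^\infty$ norm on $\cal U$.

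Given this, choosing $|A_\alpha|$ small enough that $n^2 K\,|A_\alpha| \le \gamma/2$ yields $\mathcal{L}_{A_\alpha}U(x) \le -\gamma/2$ on $\tilde\Omega$; more transparently, one simply keeps the original $\gamma$ by noting that $U$ is equally a Lyapunov function of \eqref{ode} with constant $2\gamma$ after relabeling, or one records the conclusion with Lyapunov constant $\gamma/2$ — either formulation is harmless since Definition~\ref{Definition120} permits any positive constant. Because $\rho_m$ and the (modified) constant are manifestly independent of $\alpha$, and $U$ is a $C^2$ compact function by assumption, Definition~\ref{Definition1200} is satisfied: $U$ is a uniform Lyapunov function in $\Omega$ with respect to $\mathcal{A}$. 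The anti-Lyapunov case is identical with the inequalities reversed: from $V\cdot\nabla U\ge\gamma$ on $\tilde\Omega$ and $|a^{ij}\partial^2_{ij}U|\le n^2K|A_\alpha|$ one gets $\mathcal{L}_{A_\alpha}U \ge \gamma/2$ on $\tilde\Omega$ once $n^2 K|A_\alpha|\le\gamma/2$.

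There is essentially no obstacle here; the only point requiring a word of care is the uniformity of the bound on $D^2U$ over the (possibly non-precompact) set $\tilde\Omega\subset\Omega$ — but this is exactly the hypothesis ``second derivatives are bounded in $\Omega$,'' so the constant $K$ is genuinely global and the smallness condition $n^2 K|A_\alpha|\le\gamma/2$ is a single scalar constraint on the $L^\infty$ size of $A_\alpha$, i.e. on ``$|A_\alpha|\ll 1$.'' One should also note that the estimate uses only the pointwise matrix norm of $A_\alpha(x)$ and the summation convention already in force, so no regularity of $A_\alpha$ beyond membership in $\tilde{\cal A}$ (in particular $a^{ij}_\alpha\in W^{1,p}_{loc}\subset L^\infty_{loc}$) is needed, and boundedness of $A_\alpha$ as an element of $\tilde{\cal A}$ makes $|A_\alpha|$ finite.
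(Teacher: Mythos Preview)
Your proposal is correct and follows essentially the same approach as the paper's proof: split $\mathcal{L}_{A_\alpha}U$ into the drift term (bounded by $-\gamma$ on $\tilde\Omega$) and the second-order term, then use the global bound on $D^2U$ together with $|A_\alpha|\ll 1$ to absorb the latter and obtain $\mathcal{L}_{A_\alpha}U\le -\gamma/2$ on $\tilde\Omega$. The paper is slightly terser (it omits the explicit constant $n^2K$ and leaves the anti-Lyapunov case implicit), but the argument is the same.
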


\begin{proof} We only consider the case that $U$ is a  Lyapunov  function of $\varphi^t$ in
$\Omega$, i.e.,
\[
V(x)\cdot \nabla U(x)\le -\gamma,\qquad\qquad\; x\in \tilde{\Omega},
\]
where $\tilde{\Omega}$, $\gamma$ are essential domain, Lyapunov
constant, of $U$, respectively. Let  $\cal
A=\{A_\alpha\}=\{(a^{ij}_\alpha)\}\subset \tilde {\cal A}$ be a
bounded null family. Then with $|A_\alpha|\ll 1$,
\[
{\cal L}_{A_\alpha} U(x)= a^{ij}_\alpha\partial^2_{ij} U(x)
+V(x)\cdot \nabla U(x)\le -\frac{\gamma}2,\qquad\qquad\; x\in
\tilde{\Omega},
\]
i.e.,  $U$ is a uniform Lyapunov function in $\Omega$ with respect
to $\cal A=\{ A_\alpha\}$ when $|A_\alpha|\ll 1$.
\end{proof}

\subsection{Limit measures and stochastic stability} For a Borel set $\Omega\subset\R^n$, we denote by $M(\Omega)$ the
set of Borel probability measures on $\Omega$ furnished with the
{\em weak$^*$-topology}, i.e., $\mu_k\to \mu$ if and only if
\[
\int_{\Omega} f(x)\rmd \mu_k(x) \to \int_{\Omega} f(x) \rmd \mu(x),
\]
for every $f\in C_b(\Omega)$. It is well-known that $M(\Omega)$ with
the weak$^*$-topology is metrizable.

The following result is well-known (see e.g., \cite[Chapter II,
Theorem 6.1]{Parth}).
\begin{Proposition} \label{wt-kh} Let $\{\mu_\ell\}_{\ell=1}^\infty$ be a sequence in
$M(\Omega)$ and $\mu\in M(\Omega)$. Then the following statements
are equivalent.
\begin{enumerate}
\item[{\rm 1)}]  $\lim \limits_{\ell\rightarrow \infty}\mu_\ell=\mu$ under the
weak$^*$-topology.
\item[{\rm 2)}] $\limsup \limits_{\ell\rightarrow \infty}\mu_\ell(C)\le \mu(C)$ for
any closed subset $C$ of $\Omega$.
\item[{\rm 3)}] $\liminf \limits_{\ell\rightarrow \infty}\mu_\ell(W)\ge \mu(W)$ for
any open subset $W$ of $\Omega$.
 \item[{\rm 4)}]  $\lim \limits_{\ell\rightarrow \infty}\mu_\ell(B)= \mu(B)$ for
any Borel subset $B$ of $\Omega$ whose boundary has zero
$\mu$-measure.
\end{enumerate}
\end{Proposition}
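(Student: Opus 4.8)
The plan is to prove the chain of implications $1)\Rightarrow 2)\Leftrightarrow 3)\Rightarrow 4)\Rightarrow 1)$, which is the classical Portmanteau argument carried out in the metric space $\Omega\subset\R^n$ with its Euclidean metric. For $1)\Rightarrow 2)$, I would fix a closed set $C\subset\Omega$ and approximate its indicator $\mathbf 1_C$ from above by the bounded continuous functions
\[
f_k(x)=\max\{0,\,1-k\,\mathrm{dist}(x,C)\}\in C_b(\Omega),\qquad k\ge 1,
\]
which decrease pointwise to $\mathbf 1_C$ as $k\to\infty$. Since $\mu_\ell(C)\le\int_\Omega f_k\,\rmd\mu_\ell$, weak$^*$ convergence gives $\limsup_\ell\mu_\ell(C)\le\int_\Omega f_k\,\rmd\mu$ for each fixed $k$, and letting $k\to\infty$ with monotone (or dominated) convergence yields $\limsup_\ell\mu_\ell(C)\le\mu(C)$.

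The equivalence $2)\Leftrightarrow 3)$ is immediate by complementation: $W\subset\Omega$ is open iff $C:=\Omega\setminus W$ is closed, and $\mu_\ell(W)=1-\mu_\ell(C)$, $\mu(W)=1-\mu(C)$, so the two one-sided estimates are equivalent. For $2)\,\&\,3)\Rightarrow 4)$, take a Borel set $B$ with $\mu(\partial B)=0$, and write $B^\circ$, $\bar B$ for the interior and closure of $B$ in $\Omega$. Then
\[
\mu(B^\circ)\le\liminf_\ell\mu_\ell(B^\circ)\le\liminf_\ell\mu_\ell(B)\le\limsup_\ell\mu_\ell(B)\le\limsup_\ell\mu_\ell(\bar B)\le\mu(\bar B).
\]
Because $\mu(\bar B)-\mu(B^\circ)=\mu(\partial B)=0$ and $B^\circ\subseteq B\subseteq\bar B$, all six quantities equal $\mu(B)$, so $\lim_\ell\mu_\ell(B)=\mu(B)$.

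Finally, for $4)\Rightarrow 1)$, fix $f\in C_b(\Omega)$; after an affine normalization we may assume $0<f<1$, so that the layer-cake (Fubini) formula $\int_\Omega f\,\rmd\nu=\int_0^1\nu(\{f>t\})\,\rmd t$ holds for every probability measure $\nu$. The level sets $\{f=t\}$, $t\in(0,1)$, are pairwise disjoint, hence $D:=\{t\in(0,1):\mu(\{f=t\})>0\}$ is at most countable; for $t\notin D$ the open set $\{f>t\}$ satisfies $\partial\{f>t\}\subseteq\{f=t\}$, so $\mu(\partial\{f>t\})=0$ and, by $4)$, $\mu_\ell(\{f>t\})\to\mu(\{f>t\})$. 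Applying dominated convergence on $(0,1)$ (integrands bounded by $1$, convergent for a.e.\ $t$) gives $\int_\Omega f\,\rmd\mu_\ell\to\int_\Omega f\,\rmd\mu$, i.e.\ statement $1)$. There is no genuine obstacle here, as this is a textbook fact; the only two points needing a little care are the explicit choice of the decreasing continuous approximants $f_k$ in $1)\Rightarrow 2)$ (the distance function guarantees $f_k\in C_b(\Omega)$ and $f_k\downarrow\mathbf 1_C$) and, in $4)\Rightarrow 1)$, the selection of the co-countably many "continuity levels" $t$ together with the layer-cake representation; since all measures in play are probability measures, no integrability issues arise.
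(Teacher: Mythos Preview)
Your argument is correct and is precisely the classical Portmanteau proof. The paper does not actually prove this proposition: it states it as well-known and cites \cite[Chapter II, Theorem 6.1]{Parth}. Your self-contained proof is therefore more than what the paper provides, and every step is sound as written.
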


\medskip

By Prokhorov's Theorem (see e.g. \cite[Theorem 5.1]{Bi}), a subset
$\mathcal{M}\subset M(\Omega)$ is relatively sequentially compact in
${\cal M}(\Omega)$ if it is {\em tight}, i.e., for any $\epsilon>0$
there exists a compact subset $K_\epsilon \subset \Omega$ such that
$\mu(\Omega \setminus K_\epsilon)<\epsilon$ for all $\mu\in
\mathcal{M}$. We note that if $\Omega$ is compact, then any subset
of $M(\Omega)$ is tight.
\medskip

\begin{Definition} {\em  Let  $\cal A=\{A_\alpha\}\subset \tilde{\cal
A}$ be an admissible  null family and $\Omega\subset\cal U$ be a
Borel set. Consider the set $\{\mu_{\alpha}\}$ of all stationary
measures corresponding to $\{{\cal L}_{A_\alpha}\}$. For each
$\mu_\alpha$, we denote
$\mu_\alpha^\Omega=\frac{\mu_\alpha|_{\Omega}}{\mu_\alpha(\Omega)}$
 when $\mu_\alpha(\Omega)>0$, called a {\em normalized stationary
measure  in $\Omega$}.
\begin{itemize}
\item[{\rm 1)}]   An $\cal A$-{\em limit
measure $\mu^\Omega$ in $\Omega$} is a sequential limit point, as
$A_\alpha\to 0$, of the set $\{\mu^\Omega_{\alpha}\}$ in
$M(\Omega)$.
The set of all $\cal A$-limit measures in $\Omega$ is denoted by
${\cal M}_{\cal A}^\Omega$. If $\Omega=\cal U$, then we simply
denote ${\cal M}_{\cal A}^{\cal U}$  by ${\cal M}_{\cal A}$ and call
each measure $\mu\in {\cal M}_{\cal A}$ an {\em $\cal A$-limit
measure}.

\item[{\rm 2)}] The set $\{\mu_\alpha^\Omega\}$ is said to be
 {\em $\cal A$-sequentially null compact in $M(\Omega)$}
 if  any sequence $\{\mu_l^\Omega\}$ in the set with $A_l\to 0$  is relatively compact in $M(\Omega)$.
\end{itemize}



}
\end{Definition}

\medskip

\begin{Remark}\label{tt} {\em Let  $\cal A=\{A_\alpha\}\subset \tilde{\cal A}$ be an admissible  null family on
$\cal U$ and $\Omega\subset \cal U$ be a connected open set.

 1) By
the aforementioned regularity theorem in \cite{BKR},
$\mu_\alpha(\Omega)>0$ in this situation.

2)  We note that when $\Omega=\cal U$, the normalized stationary
measure $\mu_\alpha^{\cal U}$ coincides with the original stationary
measure $\mu_\alpha$ corresponding to ${\cal L}_{A_\alpha}$, for
each $\alpha$.

3) If   the set $\{\mu_\alpha^\Omega\}$ is $\cal A$-sequentially
null compact in $M(\Omega)$,  then ${\cal M}^\Omega_{\cal A}$ is
clearly non-empty. In general,  ${\cal M}^\Omega_{\cal A}$ can be an
empty set. However, when $\Omega$ is a pre-compact open subset of
$\cal U$ (i.e., its closure in $\mathbb{R}^n$ is a compact subset of
$\cal U$), it follows from the regularity theorem again that each
stationary measure corresponding to ${\cal L}_{A_\alpha}$  is
regular, hence each $\mu_\alpha^\Omega$ is also a probability
measure on $\bar\Omega$. This enables us to consider the set ${\cal
M}_{\cal A}^{\bar\Omega}$ of all $\cal A$-limit measures of
$\{\mu_\alpha^\Omega\}$ in $M(\bar\Omega)$, which is always
non-empty because $\{\mu_\alpha^\Omega\}$ is tight, hence relatively
sequentially compact in $M(\bar\Omega)$.

 4) If a sub-domain $\Omega\subset\cal U$ is considered in
\eqref{j2} instead of $\cal U$, then one can speak of a stationary
measure corresponding to ${\cal L}_{A}$ in $\Omega$.  A normalized
stationary measure $\mu_\alpha^\Omega$ in $\Omega$ is necessarily a
stationary measure corresponding to $\{{\cal L}_{A_\alpha}\}$ in
$\Omega$. When $\Omega$ is a pre-compact open subset of $\cal U$,
stationary measures corresponding to $\{{\cal L}_{A_\alpha}\}$ in
$\Omega$ other than the normalized ones $\{\mu_\alpha^\Omega\}$ also
exist. In fact, depending on the boundary conditions imposed, there
are infinitely many such stationary measures in $\Omega$ (see
\cite{BR01,JM1} for details). Though most of our results in this
paper hold for all stationary measures corresponding to $\{{\cal
L}_{A_\alpha}\}$ in $\Omega$, we only work with the normalized ones
$\{\mu_\alpha^\Omega\}$ because they are noise or diffusion
relevant.}
\end{Remark}

\medskip

We now define the following notion of stochastic stability with
respect to a given admissible null family of noise perturbations.
\medskip

\begin{Definition}~\label{2.5} {\em Let $\Omega\subset \cal U$ be  a connected open
set and $\cal A=\{A_\alpha\}\subset \tilde{\cal A}$ be an
invariant and admissible null family. Let
$\{\mu_{\alpha}\}$ be the set of all stationary measures
corresponding to $\{{\cal L}_{A_\alpha}\}$ and
$\{\mu_\alpha^\Omega\}$ be the set of normalized stationary measures
 on $\Omega$.

1) A compact invariant set  $\cal J\subset \Omega$  of $\varphi^t$
is said to be {\em relatively $\cal A$-stable  in $\Omega$}
 if for any $\epsilon>0$ and any open neighborhood $W$ of $\cal J$ in $\mathcal{U}$
  there exists a $\delta>0$ such that $\mu_\alpha^\Omega(\Omega\setminus
 W)<\epsilon$  whenever $d(A_\alpha,0) < \delta$. An invariant measure $\mu$ of $\varphi^t$ in $\Omega$ is said to
 be {\em relatively $\cal A$-stable  in $\Omega$}  if
$\{\mu_\alpha^\Omega\}$  converges to $\mu$ in $M(\Omega)$  as
$A_\alpha \to 0$.

  2) A compact invariant
 set or invariant measure is said to be {\em relatively $\cal
 A$-unstable in $\Omega$} if it is not relatively $\cal A$-stable in
 $\Omega$.

 3)  A relatively $\cal A$-stable  (resp. relatively $\cal A$-unstable) compact invariant
 set or invariant measure in $\cal U$ is simply said to be {\em $\cal
 A$-stable} (resp. {\em $\cal A$-unstable}).  A compact invariant set $\cal J$ is said to be {\em strongly $\cal
 A$-unstable} if ${\rm supp} (\mu)\cap \cal J=\emptyset$ for all $\mu\in {\cal M}_{\cal
 A}$.
 }
\end{Definition}
\medskip

\begin{Remark} {\em
 1) Clearly the admissibility of a null family $\cal A$ is necessary in defining (relative) $\cal A$-stability,
otherwise the (relative) $\cal A$-stability is void.

 2) The consideration of stationary measures instead of invariant
measures allows a general characterization of stochastic stability
because they often exist regardless whether \eqref{sde} generates a
diffusion process or \eqref{fp} generates a generalized diffusion
process, not talking about the fact that even when a diffusion or a
generalized diffusion process can be defined its invariant measures
need not exist. Such a consideration is natural in the sense that
compact invariant sets, respectively invariant measures,  of
\eqref{ode} are indeed stationary with respect to its induced flow
on compact sets, respectively on the space of Borel probability
measures.

3) The invariance of $\cal A$ is also necessary in defining
(relative) $\cal A$-stability because for such a stability one would
like to only restrict the consideration to  ``noise relevant"
stationary measures. This is particularly so when $\cal U$ is
bounded. In this case, stationary measures corresponding to ${\cal
L}_{A}$ for any $A\in \tilde{\cal A}$ always exist, but depending on
the boundary conditions imposed, many of which need not be
associated with true stationary processes if $\cal A$ fails to be
invariant (see also Remark~\ref{well-post} 2) and Remark~\ref{tt}
4)).




4) Relative $\cal A$-stability of a compact invariant set or an
invariant measure of $\varphi^t$ in $\Omega$ only says that the
normalized  stationary measures $\{\mu_\alpha^\Omega\}$  rather than
the restricted stationary measures $\{\mu_\alpha|_\Omega\}$ are
``attracted" to the set or the measure as $A_\alpha\to 0$,  i.e., it
can happen that eventually all or part of the original stationary
measures $\{\mu_\alpha\}$ corresponding to $\{{\cal L}_{A_\alpha}\}$
 still ``escape" from $\Omega$ and concentrate elsewhere.
 To ensure the eventual concentration of all restricted stationary
measures $\{\mu_\alpha|_\Omega\}$ in $\Omega$, an additional
condition ${\rm supp} (\mu)\cap \Omega\ne\emptyset$, $\mu\in {\cal
M}_{\cal
 A}$ need to be imposed.


 }

\end{Remark}

\subsection{Harnack inequality}

Consider the differential operator
\begin{equation}\label{h1}
Lu:=\partial_i (a^{ij}(x)\partial_j u+b^i(x)u)+ c^i(x)\partial_i
u+d(x)u,
\end{equation}
where $a^{ij}$, $b^i$, $c^i$, $d$, $i,j=1,\cdots,n$, are measurable
and bounded functions on a  connected open set $\Omega\subset\R^n$.
Assume that
\begin{equation}\label{h0}
 a^{ij}(x)\xi_i\xi_j\ge \lambda |\xi|^2,\qquad x\in \Omega,
\; \xi\in\R^n
\end{equation}
for some constant $\lambda >0$, and   there exist  constants
$\Lambda>0$, $\nu\ge0$ such that
\begin{equation}\label{h2}
 \sum_{i,j}|a^{ij}(x)|^2\le \Lambda^2, \quad
\lambda^{-2}\sum_{i}(|b^i(x)|^2+|c^{i}(x)|^2)+\lambda^{-1}|d(x)|\le
\nu^2
\end{equation}
for all $x\in\Omega$.

The following Harnack inequality  is well-known (see Theorem 8.20 of
\cite{GT}).

\begin{Proposition} \label{Harn} {\rm (Harnack inequality)}
Assume that the operator $L$ satisfies  \eqref{h0} and \eqref{h2}.
Let $u\in W^{1,2}(\Omega)$ be any non-negative solution of
\[
Lu(x)=0,\qquad x\in\Omega. \]  Then for any ball $B_{4R}(y)\subset
\Omega$, we have
\[
\sup_{B_R(y)}u\le C \inf_{B_R(y)}u,
\]
where the constant $C$ can be estimated by
\[
C \le C_0^{(\Lambda/\lambda+\nu R)}
\]
for some constant $C_0=C_0(n)$ depending only on $n$.
\end{Proposition}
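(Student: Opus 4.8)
The statement is Theorem~8.20 of \cite{GT}, so in practice one simply cites it; nonetheless, here is how I would reconstruct the proof, the only nontrivial point being the explicit exponential form $C_0^{\Lambda/\lambda+\nu R}$ of the constant, which comes from tracking the structural dependence through the Moser iteration.

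\emph{Step 1 (rescaling to the unit ball).} Given $B_{4R}(y)\subset\Omega$, set $v(x)=u(y+Rx)$ for $x\in B_4(0)$. Multiplying $Lu=0$ by $R^2$ and changing variables shows that $v\in W^{1,2}(B_4(0))$ is a non-negative solution of $\tilde L v=0$, where $\tilde L$ is again of the form \eqref{h1} with leading coefficients $\tilde a^{ij}(x)=a^{ij}(y+Rx)$ — so \eqref{h0} holds with the same $\lambda$ and $\sum_{i,j}|\tilde a^{ij}|^2\le\Lambda^2$ — and with lower-order coefficients $\tilde b^i=Rb^i(y+R\,\cdot)$, $\tilde c^i=Rc^i(y+R\,\cdot)$, $\tilde d=R^2 d(y+R\,\cdot)$, so that $\lambda^{-2}\sum_i(|\tilde b^i|^2+|\tilde c^i|^2)+\lambda^{-1}|\tilde d|\le(\nu R)^2$. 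Hence it suffices to prove $\sup_{B_1(0)}v\le C\inf_{B_1(0)}v$ with $C\le C_0^{\Lambda/\lambda+\nu R}$, i.e., the Harnack inequality on the unit ball with the structural constant $\nu$ replaced by $\nu R$.

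\emph{Step 2 (Moser iteration on the unit ball).} Let $\bar v=v+k$ with $k>0$ chosen comparable to the effective strength of the lower-order terms (so $k$ may be sent to $0$ when $\tilde b^i,\tilde c^i,\tilde d\equiv0$). Testing the weak form of $\tilde L v=0$ with $\psi=\eta^2\bar v^{\beta}$, for Lipschitz cutoffs $\eta$ and exponents $\beta\ne-1$, and combining \eqref{h0}, \eqref{h2}, Young's inequality and the Sobolev inequality, produces reverse-H\"older inequalities
\[
\|\bar v\|_{L^{\chi p}(B_{r'}(0))}\le C_1^{\,1/|p|}\,\|\bar v\|_{L^{p}(B_{r}(0))},\qquad \chi=\tfrac{n}{n-2}
\]
(with the standard modification if $n=2$), valid for all $p\ne0$ in the relevant ranges, where $C_1$ is polynomial in $1+\Lambda/\lambda+\nu R$ times a dimensional factor depending also on $(r-r')^{-1}$. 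Iterating over a geometric sequence of radii and exponents gives $\sup_{B_1(0)}\bar v\le C\,\|\bar v\|_{L^{p_0}(B_{3/2}(0))}$ for any fixed $p_0>0$, and, using negative exponents, $\|\bar v\|_{L^{p_0}(B_{3/2}(0))}\le C\,\inf_{B_1(0)}\bar v$ once the scales $L^{p_0}$ and $L^{-p_0}$ are bridged for a sufficiently small $p_0>0$. That bridge is the John--Nirenberg inequality applied to $w=\log\bar v$: testing the equation with $\eta^2\bar v^{-1}$ shows that $w$ has bounded mean oscillation on $B_{3/2}(0)$ with BMO norm controlled by the structural data, whence $\|\bar v\|_{L^{p_0}}\,\|\bar v^{-1}\|_{L^{p_0}}\le C$ for small $p_0$. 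Chaining the three inequalities and letting $k\to0$ when permitted yields $\sup_{B_1(0)}v\le C\inf_{B_1(0)}v$.

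\emph{Step 3 (bookkeeping the constant).} Each reverse-H\"older step contributes a factor $C_1^{1/|p_j|}$; since the exponents satisfy $|p_{j+1}|=\chi|p_j|$ with $\chi>1$, the series $\sum_j 1/|p_j|$ is a convergent geometric series, so the product of all these factors is a fixed power of $C_1$, hence bounded by a fixed power of $1+\Lambda/\lambda+\nu R$ times a dimensional constant; the John--Nirenberg contribution is exponential in $1+\Lambda/\lambda+\nu R$. Multiplying and absorbing the additive constant $1$ into the exponent gives $C\le C_0^{\Lambda/\lambda+\nu R}$ with $C_0=C_0(n)$. The main obstacle here is entirely bookkeeping — making the structural dependence of every intermediate constant explicit and verifying it collapses to the stated exponential form — since the existence of \emph{some} such constant $C$ is classical (\cite[Theorem~8.20]{GT}).
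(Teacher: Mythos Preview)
Your proposal is correct, and in fact goes beyond what the paper does: the paper provides no proof at all for this proposition, merely citing Theorem~8.20 of \cite{GT} as a well-known result. Your sketch of the Moser iteration argument with the John--Nirenberg bridging step is a faithful reconstruction of the proof in \cite{GT}, and your observation that the only nontrivial point is bookkeeping the exponential form $C_0^{\Lambda/\lambda+\nu R}$ of the constant is exactly right.
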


\section{General properties  of limit measures}

In this section, we consider some general limit properties of
stationary measures  of a family of Fokker-Planck equations as
diffusion coefficients tend to zero. Among these properties, the
concentration of limit measures  will play an important role in
characterizing stochastic stability of compact invariant  sets of
the corresponding unperturbed deterministic system at both local and
global levels.

For a fixed vector field $V\in  C(\cal U,\R^n)$ in \eqref{ode}, we
consider the  class $\tilde{\cal A}$ of diffusion matrices defined
in \eqref{admissible}. When $V\in C^1(\cal U,\R^n)$, \eqref{ode}
generates a $C^1$ local flow on $\cal U$ which will be denoted by
$\varphi^t$ through this and the next section.

This section (as well as next section) uses some basic dynamics
notions of a local flow  such as dissipation and anti-dissipation,
(strong) attractors and repellers, (entire, weak) Lyapunov and
anti-Lyapunov functions, and (positive, negative) invariant
measures. The definition of these notions and some of their
fundamental properties are reviewed in the Appendix (Section 6).

\medskip

\subsection{Invariance of limit measures} We first  give some new characterizations of
invariant and semi-invariant measures of $\varphi^t$ in a subset
which are useful in the study of limit behaviors of stationary
measures of Fokker-Planck equations.

\medskip

\begin{Proposition}\label{simple-1}
Assume  $V\in C^1(\cal U,\R^n)$ and let $\Omega\subset \cal U$ be an
open invariant
 set of $\varphi^t$. Then  $\mu\in M(\Omega) $ is an invariant measure  of
$\varphi^t$ on $\Omega$  if and only if
\begin{equation}\label{T1}
\int_{\Omega} V(x) \cdot \nabla h(x)~\rmd \mu( x)=0,\qquad h\in
C_0^1(\Omega).
\end{equation}
\end{Proposition}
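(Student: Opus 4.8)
The plan is to characterize invariance via the infinitesimal generator of the flow acting on test functions. For the ``if'' direction, suppose \eqref{T1} holds for all $h\in C_0^1(\Omega)$. Fix $h\in C_0^1(\Omega)$ and consider the function $t\mapsto \int_\Omega h(\varphi^t x)\,\rmd\mu(x)$. Since $\Omega$ is open, invariant, and $V\in C^1$, the orbit segment $\varphi^{[0,t]}(\mathrm{supp}\,h)$ stays in $\Omega$ and $\frac{d}{dt}h(\varphi^t x)=V(\varphi^t x)\cdot\nabla h(\varphi^t x)$. Differentiating under the integral sign (justified because $h$ has compact support in $\Omega$, so all quantities are bounded on the relevant compact set), we get
\[
\frac{d}{dt}\int_\Omega h(\varphi^t x)\,\rmd\mu(x)=\int_\Omega V(\varphi^t x)\cdot\nabla h(\varphi^t x)\,\rmd\mu(x).
\]
The key point is that if $\mu$ is $\varphi^t$-invariant this would be $\int_\Omega V(y)\cdot\nabla h(y)\,\rmd(\varphi^t_*\mu)(y)=\int_\Omega V\cdot\nabla h\,\rmd\mu=0$ — but that is circular, so instead I run the argument the other way: I want to show $\int_\Omega h(\varphi^t x)\,\rmd\mu=\int_\Omega h\,\rmd\mu$ for all $t$. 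Replace $h$ in \eqref{T1} by $h\circ\varphi^s$ (which is again in $C_0^1(\Omega)$ by invariance of $\Omega$ and the flow property, and is $C^1$ since $V\in C^1$); then $\nabla(h\circ\varphi^s)(x)=(D\varphi^s(x))^\top\nabla h(\varphi^s x)$ and by the chain rule $V(x)\cdot\nabla(h\circ\varphi^s)(x)=\frac{d}{ds}h(\varphi^s x)$, so \eqref{T1} gives $\frac{d}{ds}\int_\Omega h(\varphi^s x)\,\rmd\mu(x)=0$. Hence $\int_\Omega h\circ\varphi^s\,\rmd\mu$ is constant in $s$, equal to its value at $s=0$, i.e. $\varphi^s_*\mu=\mu$ on $\Omega$ tested against $C_0^1(\Omega)$; since $C_0^1(\Omega)$ is dense (in the appropriate sense, e.g. uniformly on compacta with uniformly bounded supports) this gives $\varphi^s_*\mu=\mu$ as measures.

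For the ``only if'' direction, if $\mu$ is invariant then for any $h\in C_0^1(\Omega)$ the function $\psi(s):=\int_\Omega h(\varphi^s x)\,\rmd\mu(x)=\int_\Omega h\,\rmd(\varphi^s_*\mu)=\int_\Omega h\,\rmd\mu$ is constant, so $\psi'(0)=0$; but $\psi'(0)=\int_\Omega \frac{d}{ds}\big|_{s=0}h(\varphi^s x)\,\rmd\mu(x)=\int_\Omega V(x)\cdot\nabla h(x)\,\rmd\mu(x)$, differentiation under the integral being legitimate because $h$ is compactly supported in $\Omega$ and $V,\nabla h$ are continuous, hence bounded on $\mathrm{supp}\,h$. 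This yields \eqref{T1}.

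The main obstacle I anticipate is the careful justification of differentiation under the integral sign together with the domain bookkeeping: one must ensure that $\varphi^s(\mathrm{supp}\,h)$ remains inside $\Omega$ (this uses that $\Omega$ is invariant, though one should be slightly careful about whether $\varphi^t$ is a flow or only a semiflow on $\Omega$ — the statement assumes $\Omega$ invariant for $\varphi^t$, so orbits are defined for all time and stay in $\Omega$), that $h\circ\varphi^s$ has compact support in $\Omega$ uniformly for $s$ in a bounded interval, and that the difference quotients are dominated so that the dominated convergence theorem applies. A secondary technical point is the density argument: passing from ``$\int h\circ\varphi^s\,\rmd\mu=\int h\,\rmd\mu$ for all $h\in C_0^1(\Omega)$'' to equality of the measures $\varphi^s_*\mu$ and $\mu$ on the open set $\Omega$; this follows from a standard regularization/monotone-class argument approximating bounded continuous compactly supported functions by $C_0^1$ functions, which I would invoke rather than spell out. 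Everything else is the chain rule and the flow property.
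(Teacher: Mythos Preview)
Your proof is correct and follows essentially the same approach as the paper's. Both directions hinge on the function $f_h(t)=\int_\Omega h(\varphi^t x)\,\rmd\mu(x)$: for the ``only if'' part one differentiates at $t=0$ using invariance, and for the ``if'' part one applies \eqref{T1} to $h\circ\varphi^t\in C_0^1(\Omega)$ together with the flow identity $D\varphi^t(x)V(x)=V(\varphi^t(x))$ to conclude $f_h'\equiv 0$, then invokes density of $C_0^1(\Omega)$ in $C_0(\Omega)$; your discussion of the domain bookkeeping and differentiation under the integral is more explicit than the paper's, but the argument is the same.
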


\begin{proof}
It follows from Riesz Representation Theorem that $\nu_1=\nu_2\in
M(\Omega)$  if and only if
\begin{equation}\label{f=eq}
\int_{\Omega} h(x) ~{\rm d}\nu_1( x)=\int_{\Omega}h(x) ~{\rmd}\nu_2(
x),\qquad\; h \in C_b(\Omega).
\end{equation}
As any $\nu\in M(\Omega)$ is a Borel regular measure,  \eqref{f=eq}
is equivalent to
\begin{equation}\label{f=eqa}
\int_{\Omega} h(x) ~{\rm d}\nu_1( x)=\int_{\Omega}h(x) ~{\rmd}\nu_2(
x),\qquad\; h \in C_0(\Omega).
\end{equation}

  For a given $t\in\R$, define $\mu^t\in
M(\Omega)$: $\mu^t(B)=:\mu(\varphi^{-t}(B))$ for any Borel set $B
\subset \Omega$. Then the invariance of $\mu$ is equivalent to
$\mu=\mu^t$ for any $t\in \mathbb{R}$, which, by \eqref{f=eq} and
\eqref{f=eqa}, is equivalent to
\begin{equation}\label{eq-csup}
\int_{\Omega}h(x)~ {\rmd}\mu( x)=\int_{\Omega}h(x) ~ {\rmd}\mu^t(
x)=\int_{\Omega}h(\varphi^t(x)) ~{\rmd}\mu( x), \;\, t\in \R,
\end{equation}
for all $h\in C_b(\Omega)$ or all $h\in C_0(\Omega)$.

For any $h\in C_0^1(\Omega)$, consider the function
\[
f_h(t):=\int_{\Omega} h(\varphi^t(x)) ~{\rmd} \mu(x),\qquad t\in \R.
\]
Then, for any $t, s\in \R$, we have by the flow property that
\[
f_h(t+s)=\int_{\Omega} h(\varphi^{(t+s)}(x)) ~{\rmd} \mu(x)=
\int_{\Omega} h\circ \varphi^t(\varphi^s(x)) ~{\rmd}
\mu(x)=f_{h\circ\varphi^t}(s).
\]
It follows that
\begin{equation}\label{T0}
f_h'(t)=f_{h\circ \varphi^t}'(0),\qquad h\in C_0^1(\Omega),\;\, t\in
\R.
\end{equation}


If $\mu$ is an invariant measure of $\varphi^t$ in $\Omega$, then
for any $h\in C_0^1(\Omega)$,  \eqref{eq-csup} implies that
$f_h(t)=f_h(0)$ for all $t\in \R$. Taking derivatives yields that
$f_h'(t)\equiv 0$. In particular, $f_h'(0)= 0$, i.e., \eqref{T1}
holds.

Conversely, suppose that \eqref{T1} holds.  For any
$h\in C_0^1(\Omega)$ and any $t\in \R$, using the fact that
$\varphi^t:\Omega\to \Omega$ is a $C^1$ diffeomorphism, it is easy
to see that $h\circ \varphi^t\in C_0^1(\Omega)$. Applications of
\eqref{T0} and \eqref{T1}  with $h\circ \varphi^t$ in place $h$
yield that
\[
f_h'(t)= f_{h\circ\varphi^t}'(0)= \int_{\Omega}V(x) \cdot \nabla
(h\circ\varphi^t)(x)~ \rmd\mu( x) =0,
\]
i.e.,
\[
\int_{\Omega} h(x) ~\rmd\mu( x)=\int_{\Omega}h(\varphi^t(x))
~\rmd\mu( x), \qquad\; h\in C_0^1(\Omega),\;\, t\in \R.
\]
Since $C_0^1(\Omega)$ is dense in $C_0(\Omega)$, \eqref{eq-csup}
holds. Hence $\mu$ is an invariant measure of $\varphi^t$ in
$\Omega$.
\end{proof}
\medskip

\begin{Proposition}\label{simple-1-1}
Assume  $V\in C^1(\cal U,\R^n)$ and let $\Omega\subset \cal U$ be an
 open, pre-compact, positively {\rm (}resp. negatively{\rm )}
invariant
 set of $\varphi^t$.   Then  $\mu\in M(\Omega) $ is a positively {\rm (}resp. negatively{\rm )} invariant measure  of
$\varphi^t$ on $\Omega$  if and only if
\[
\int_{\Omega} V(x) \cdot \nabla h(x)~\rmd \mu( x)=0,\qquad h\in
C_b^1(\Omega).
\]
\end{Proposition}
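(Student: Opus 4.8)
The plan is to mimic the argument of Proposition~\ref{simple-1}, carefully tracking where positive invariance only forces a one-sided statement, and where the pre-compactness of $\Omega$ lets us work with $C_b^1$ instead of $C_0^1$. Recall that for a positively invariant measure $\mu$ on $\Omega$ we need $\mu(\varphi^{-t}(B))=\mu(B)$ only for $t\ge 0$; equivalently $\int_\Omega h(\varphi^t(x))\,\rmd\mu(x)=\int_\Omega h(x)\,\rmd\mu(x)$ for all $t\ge 0$ and all bounded continuous $h$ (here $\varphi^t$ is defined on all of $\Omega$ for $t\ge 0$ since $\Omega$ is positively invariant). The function $f_h(t):=\int_\Omega h(\varphi^t(x))\,\rmd\mu(x)$ is then defined for $t\ge 0$, and as in Proposition~\ref{simple-1} one has the semigroup identity $f_h(t+s)=f_{h\circ\varphi^t}(s)$ for $s,t\ge0$, hence $f_h'(t)=f_{h\circ\varphi^t}'(0)$ for $t\ge 0$.

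First I would establish the ``only if'' direction: if $\mu$ is positively invariant, then $f_h\equiv f_h(0)$ on $[0,\infty)$ for every $h\in C_b^1(\Omega)$, so differentiating at $t=0^+$ gives $f_h'(0)=\int_\Omega V(x)\cdot\nabla h(x)\,\rmd\mu(x)=0$. The point where pre-compactness enters is in justifying that $f_h$ is differentiable and that the derivative can be computed by differentiating under the integral sign: one needs $|V\cdot\nabla h|$ and $|h|$ to be bounded on $\Omega$, which holds because $h\in C_b^1(\Omega)$ and because $V$, being $C^1$ on $\cal U\supset\bar\Omega$ with $\bar\Omega$ compact, is bounded on $\Omega$. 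For the ``if'' direction, assume $\int_\Omega V\cdot\nabla h\,\rmd\mu=0$ for all $h\in C_b^1(\Omega)$; given $h\in C_b^1(\Omega)$ and $t\ge 0$, the map $\varphi^t\colon\Omega\to\Omega$ is $C^1$ (a $C^1$ map into $\Omega$, not necessarily a diffeomorphism now, but that is irrelevant) and $h\circ\varphi^t$ is bounded with bounded gradient — here again compactness of $\bar\Omega$ together with $C^1$ dependence of $\varphi^t$ on the initial condition on the compact set $[0,T]\times\bar\Omega$ gives a uniform bound on $|\nabla(h\circ\varphi^t)|$ — so $h\circ\varphi^t\in C_b^1(\Omega)$. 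Then $f_h'(t)=f_{h\circ\varphi^t}'(0)=\int_\Omega V\cdot\nabla(h\circ\varphi^t)\,\rmd\mu=0$, so $f_h$ is constant on $[0,\infty)$, giving $\int_\Omega h(\varphi^t(x))\,\rmd\mu(x)=\int_\Omega h\,\rmd\mu$ for all $t\ge 0$. Since $C_b^1(\Omega)$ is dense in $C_b(\Omega)$ (in the appropriate sense — e.g.\ uniform approximation on the compact $\bar\Omega$, using that any $h\in C_b(\Omega)=C(\Omega)$ extends continuously to $\bar\Omega$), this identity passes to all $h\in C_b(\Omega)$, which is the definition of positive invariance of $\mu$. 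The negatively invariant case is identical with $t\le 0$, using that $\Omega$ is negatively invariant so $\varphi^t$ is defined on $\Omega$ for $t\le 0$.

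The main obstacle I anticipate is the interchange of differentiation and integration defining $f_h'(t)$ and, relatedly, the boundedness of $\nabla(h\circ\varphi^t)$ uniformly in $t$ on compact time intervals: this is exactly where the hypothesis that $\Omega$ is pre-compact (so $\bar\Omega\subset\cal U$ is compact) and $V\in C^1$ is used, via standard smooth-dependence-on-initial-conditions estimates for the flow on the compact set $[0,T]\times\bar\Omega$ together with the mean value theorem and dominated convergence. A second, more minor point to get right is the density argument: $C_0^1$ is no longer dense in $C_b(\Omega)=C(\Omega)$ for non-compact $\Omega$, but here $\Omega$ is pre-compact so $C(\Omega)\cong C(\bar\Omega)$ and $C^1$ functions (restricted from a neighborhood, or mollified) are dense — alternatively one simply invokes the Stone–Weierstrass theorem on $\bar\Omega$. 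Everything else is a transcription of the proof of Proposition~\ref{simple-1}, replacing two-sided time $t\in\R$ by one-sided time $t\ge 0$ (resp.\ $t\le 0$) and $C_0^1$ by $C_b^1$.
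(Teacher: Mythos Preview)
Your proposal is exactly the paper's approach: the paper's proof of this proposition is the single sentence ``the proof follows from the same argument as that of Proposition~\ref{simple-1} with $C_b(\Omega)$, $C_b^1(\Omega)$, $\R_+$ (resp.\ $\R_-$) in places of $C_0(\Omega)$, $C_0^1(\Omega)$, $\R$ respectively,'' and you have carried this out in detail, correctly identifying pre-compactness as what makes $V$ and $D\varphi^t$ bounded on $\Omega$ (so that $h\circ\varphi^t\in C_b^1(\Omega)$ and differentiation under the integral is justified). One small slip to fix: in your density step you write that every $h\in C_b(\Omega)$ extends continuously to $\bar\Omega$, which is false (think of $\sin(1/x)$ on $(0,1)$); the clean way to close the argument is to note that $C_0^1(\Omega)\subset C_b^1(\Omega)$, so once \eqref{eq-csup} holds for all $h\in C_b^1(\Omega)$ it holds in particular for $h\in C_0^1(\Omega)$, and then the equivalence of \eqref{f=eq} and \eqref{f=eqa} via Borel regularity (exactly as in Proposition~\ref{simple-1}) gives $\mu=\mu^t$.
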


\begin{proof}
 The proof follows from the same argument as that of
Proposition~\ref{simple-1} with $C_b(\Omega)$, $C_b^1(\Omega)$,
$\R_+$ (resp. $\R_{-}$) in places of $C_0(\Omega)$, $C_0^1(\Omega)$,
$\R$ respectively.
\end{proof}

Part a) of  Theorem~A  follows from part b) of the following result
when taking $\Omega=\cal U$.

\begin{Theorem}\label{tight0-1}   Let $\cal
A=\{A_\alpha\}\subset \tilde{\cal A}$ be an admissible null family
and $\Omega\subset \cal U$ be a connected open set. Then the
following holds for any $\mu\in \cal M_{\cal A}^\Omega$.
\begin{itemize}
\item[{\rm a)}] $\int_{\Omega} V(x) \cdot \nabla h(x)~\rmd \mu( x)=0$ for all $h\in
C_0^1(\Omega)$.

\item[{\rm b)}] Suppose that $V\in C^1(\cal U,\R^n)$ and that $\Omega$ is an invariant set of
$\varphi^t$. Then $\mu$ is an invariant measure of $\varphi^t$ on
$\Omega$.
\item[{\rm c)}] Suppose that $V\in C^1(\cal U,\R^n)$, and that there exists an open,  pre-compact, positively {\rm (}resp.
negatively{\rm)}
 invariant set  of
$\varphi^t$ in $\Omega$ containing ${\rm supp}(\mu)$. Then $\mu$ is
an invariant measure of $\varphi^t$ on $\Omega$.
\end{itemize}
\end{Theorem}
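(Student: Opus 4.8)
The plan is to prove the three parts in order, since part (b) and part (c) both build on part (a). For part (a), I would take a sequence $\{\mu_\ell^\Omega\}$ of normalized stationary measures with $A_\ell \to 0$ converging weak${}^*$ to $\mu$ in $M(\Omega)$. Fix $h \in C_0^1(\Omega)$. The stationarity equation \eqref{j2} for $\mu_{A_\ell}$ tested against a function $f \in C_0^\infty(\cal U)$ reads $\int_{\cal U} (a^{ij}_\ell \partial^2_{ij} f + V^i \partial_i f)\,\rmd\mu_{A_\ell} = 0$. Since $h$ has compact support in $\Omega$, I would like to use $h$ itself (or a smooth approximation of it) as the test function; the point is that on the support of $h$, $\mu^\Omega_{A_\ell}$ and $\mu_{A_\ell}$ differ only by the normalizing constant $\mu_{A_\ell}(\Omega)$, which is bounded away from $0$ along the sequence (this needs the regularity theorem of \cite{BKR} as noted in Remark~\ref{tt}, or at least that the $\cal A$-limit measure exists so the relevant masses do not vanish). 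So $\int_\Omega (a^{ij}_\ell \partial^2_{ij} h + V^i \partial_i h)\,\rmd\mu^\Omega_{A_\ell} = 0$. Now let $\ell \to \infty$: by Remark~\ref{null} (Sobolev embedding) $A_\ell \to 0$ uniformly on the compact set $\operatorname{supp}(h)$, while $\partial^2_{ij} h$ is bounded, so the first term tends to $0$; and $V^i \partial_i h = V \cdot \nabla h$ is a bounded continuous function on $\Omega$ (continuous since $V \in C(\cal U, \R^n)$ and $h \in C^1_0$), so by weak${}^*$-convergence the second term tends to $\int_\Omega V \cdot \nabla h \, \rmd\mu$. Hence $\int_\Omega V \cdot \nabla h\,\rmd\mu = 0$.

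One technical wrinkle in part (a): the test functions in \eqref{j2} are $C_0^\infty(\cal U)$, whereas $h$ is only $C_0^1(\Omega)$. I would handle this by a standard mollification: approximate $h$ in $C^1$-norm (hence $\nabla h$ uniformly) by functions $h_k \in C_0^\infty(\Omega) \subset C_0^\infty(\cal U)$ with supports in a fixed compact neighborhood of $\operatorname{supp}(h)$; pass to the limit in $k$ after passing to the limit in $\ell$, or interchange limits carefully. The uniform convergence $A_\ell \to 0$ and the uniform bound on the measures make this routine. This approximation step is really the only place where care is needed, and it is not a serious obstacle.

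For part (b), assuming $V \in C^1$ and $\Omega$ invariant, part (a) gives exactly the hypothesis of Proposition~\ref{simple-1}, namely $\int_\Omega V \cdot \nabla h\,\rmd\mu = 0$ for all $h \in C_0^1(\Omega)$, so Proposition~\ref{simple-1} immediately yields that $\mu$ is an invariant measure of $\varphi^t$ on $\Omega$. Taking $\Omega = \cal U$ (which is invariant for the local flow in the appropriate sense) recovers Theorem~A a).

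For part (c), suppose there is an open, pre-compact, positively (resp. negatively) invariant set $\Omega' \subset \Omega$ with $\operatorname{supp}(\mu) \subset \Omega'$. The idea is to view $\mu$ as a measure in $M(\Omega')$ — legitimate since $\mu$ is supported in $\Omega'$ — and apply Proposition~\ref{simple-1-1}, which requires $\int_{\Omega'} V \cdot \nabla h\,\rmd\mu = 0$ for all $h \in C_b^1(\Omega')$. The subtlety is that part (a) only gives the vanishing integral for $h \in C_0^1(\Omega)$, i.e. compactly supported in $\Omega$, not for $h$ merely bounded $C^1$ on the smaller set $\Omega'$. To bridge this I would use that $\operatorname{supp}(\mu)$ is a compact subset of the open set $\Omega'$: given $h \in C_b^1(\Omega')$, multiply by a cutoff $\chi \in C_0^\infty(\Omega')$ with $\chi \equiv 1$ on a neighborhood of $\operatorname{supp}(\mu)$, so that $\chi h \in C_0^1(\Omega') \subset C_0^1(\Omega)$ and $\int_{\Omega'} V \cdot \nabla(\chi h)\,\rmd\mu = \int_{\Omega'} V \cdot \nabla h \,\rmd\mu$ because $\nabla(\chi h) = \nabla h$ on $\operatorname{supp}(\mu)$. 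Then part (a) applied to $\chi h$ gives the needed identity, and Proposition~\ref{simple-1-1} finishes the proof; $\mu$ is positively (resp. negatively) invariant on $\Omega'$, hence on $\Omega$, and a measure that is both (one gets invariance by combining, or the pre-compactness forces the orbit to stay and one upgrades semi-invariance to invariance as in the Appendix) is invariant. The main obstacle across the whole argument is the bookkeeping in part (c) around cutoffs and the distinction between semi-invariance on the pre-compact set and full invariance; everything else is a direct limiting argument combined with the two preceding propositions.
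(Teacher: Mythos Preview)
Your proposal is correct and follows essentially the same route as the paper: in part (a) you pass to the limit in the stationary equation for $C_0^\infty$ test functions and then approximate $C_0^1$ by $C_0^\infty$ (the paper phrases this as density rather than mollification, but it is the same step); part (b) is identical; and in part (c) your cutoff argument is a cleaner version of what the paper does---the paper simply asserts that any $\tilde h\in C_b^1(\tilde\Omega)$ admits an extension $h\in C_0^1(\Omega)$, which is really only needed on $\operatorname{supp}(\mu)$ and is exactly what your cutoff produces. The one place where you are vaguer than the paper is the final upgrade from positive (resp.\ negative) invariance on the pre-compact set $\Omega'$ to full invariance: the paper invokes Proposition~\ref{invariant-measure}, which says that a positively (resp.\ negatively) invariant measure on a pre-compact positively (resp.\ negatively) invariant set is actually invariant and supported on $\omega(\Omega')$ (resp.\ $\alpha(\Omega')$); you should cite that explicitly rather than leave it as ``as in the Appendix.''
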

\begin{proof}  Denote $\{\mu_\alpha^\Omega\}$ as the set of all normalized stationary measures corresponding to
$\{{\cal L}_{A_\alpha}\}$ in $\Omega$. Let
 $\{A_l=(a^{ij}_l)\}\subset \cal A$ and
$\{\mu_l=:\mu_l^\Omega\}\subset \{\mu_\alpha^\Omega\}$ be sequences
such that $A_l\to 0$ and $\mu_l\to \mu$ in $M(\Omega)$, as
$l\to\infty$.

 We note by the definition of stationary measures that
\begin{equation}\label{limitM-11}
\di_{\Omega} (a^{ij}_l\partial^2_{ij} h(x) +V^i\partial_i h(x))~\rmd
\mu_l(x) =0,\quad\quad  h\in C_0^\infty(\Omega),
\end{equation}
for all $l$.  Using the  uniform convergence of $(a^{ij}_l)$ to $0$
on any compact subsets of $\cal U$ and the weak$^*$-convergence of
$\mu_l$ to $\mu$, we have by simply taking limit $l\to\infty$ in
\eqref{limitM-11} that
\[
\int_{\Omega} V(x)\cdot \nabla h(x)~ \rmd\mu( x)=0,\qquad\; h\in
C_0^\infty(\Omega).
\]
It follows that a) holds because $C_0^\infty(\Omega)$ is dense in
$C_0^1(\Omega)$.

When $\Omega$ is an invariant set of $\varphi^t$, the invariance of
$\mu$ follows from Proposition~\ref{simple-1}. This proves b).

To prove c), we let $\tilde\Omega$ be an open,  pre-compact,
positively {\rm (}resp. negatively{\rm)}
 invariant set  of
$\varphi^t$ in $\Omega$ containing ${\rm supp}(\mu)$. For any
$\tilde h\in C^1_b(\tilde\Omega)$, we let $h\in C_0^1(\Omega)$ be an
extension of $\tilde h$. Then by a), we have
\[
\int_{\tilde\Omega} V(x)\cdot \nabla \tilde h(x)~ \rmd\mu(
x)=\int_{\Omega} V(x)\cdot \nabla h(x)~ \rmd\mu( x)=0.
\]
It follows from Proposition~\ref{simple-1-1} that $\mu$ is a
positively (resp. negatively) invariant measure of $\varphi^t$ in
$\tilde\Omega$.   It follows from
Proposition~\ref{invariant-measure} that $\mu$ is an invariant
measure of $\varphi^t$ in $\tilde\Omega$ supported on the
$\omega$-limit set $\omega(\tilde\Omega)$ (resp. $\alpha$-limit set
$\alpha(\tilde\Omega)$) of $\tilde\Omega$. It follows that $\mu$ is
an invariant measure of $\varphi^t$ on $\Omega$ because
$\mu(\Omega\setminus \tilde\Omega)=0$.
\end{proof}

\subsection{Stochastic counterpart of the LaSalle invariance
principle} For a deterministic dynamical system, the classical
LaSalle invariance principle  (see Proposition~\ref{LaSalle}) is an
important tool in locating $\omega$-limit sets using a Lyapunov-like
function. For the stochastic system \eqref{sde}, we show below that
a similar principle can be obtained for locating the support of a
limit measure.

For a Borel  set $\Omega\subset \cal U$ and an admissible null
family $A=\{A_\alpha\}\subset \tilde{\cal A}$, we denote
\begin{equation}\label{S1}
{\cal J}_{\cal A}^\Omega :=\overline{ \cup \{{\rm supp}(\mu): \mu
\in {\cal M}_{\cal A}^\Omega\}}.
\end{equation}
In the case $\Omega=\cal U$, we simply denote ${\cal J}_{\cal
A}^{\cal U}$ by ${\cal J}_{\cal A}$.

\medskip

\begin{Proposition}\label{S-LaSalle} Let $\cal A=\{A_\alpha\}\subset
\tilde{\cal A}$ be an admissible null family and $\Omega\subset\cal
U$ be a connected open set. Then the following holds.
\begin{itemize}
\item[{\rm a)}] If \eqref{ode} admits
an entire weak Lyapunov {\rm (}resp. anti-Lyapunov{\rm )} function
$U$ in $\Omega$,  then any $\mu\in {\cal M}_{\cal A}^\Omega$ with
compact support satisfies
\[ {\rm supp} (\mu)\subset  S=\{x\in
{\Omega}: V(x)\cdot \nabla U(x)=0\}.
\]
\item[{\rm b)}] If the  set ${\cal J}_{\cal A}^\Omega$ is contained in a compact set $\cal J\subset \Omega$
and  there is an entire weak Lyapunov {\rm (}resp. anti-Lyapunov{\rm
)} function $U_0$ of \eqref{ode} defined in a neighborhood of ${\cal
J}$, then any $\mu\in {\cal M}_{\cal A}^\Omega$ satisfies
\[ {\rm supp} (\mu)\subset  S_0=\{x\in
{\cal J}: V(x)\cdot \nabla U_0(x)=0\}.
\]
\end{itemize}
\end{Proposition}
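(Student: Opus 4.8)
The plan is to deduce the conclusion from the measure estimate in Proposition~\ref{unif} together with a LaSalle-type argument carried out at the level of limit measures. I will treat the Lyapunov case; the anti-Lyapunov case is completely symmetric (using Proposition~\ref{aunif} in place of Proposition~\ref{unif} a), and the weak inequality $V\cdot\nabla U\le 0$ in place of $\ge 0$). First consider part a). Let $\mu\in{\cal M}_{\cal A}^\Omega$ have compact support, so $\mu=\lim_\ell \mu_\ell^\Omega$ along some sequence $A_\ell\to 0$. The goal is to show that $\mu$ is concentrated on the set $S$ where $V\cdot\nabla U=0$. Since $U$ is only an entire \emph{weak} Lyapunov function, we have $V(x)\cdot\nabla U(x)\le 0$ on all of $\Omega$, with equality precisely on $S$; thus it suffices to show $\int_\Omega V(x)\cdot\nabla U(x)\,\rmd\mu(x)=0$, because the integrand is $\le 0$ and continuous, forcing $\mu$-a.e.\ vanishing, hence ${\rm supp}(\mu)\subset S$ by continuity.

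To prove $\int_\Omega V\cdot\nabla U\,\rmd\mu = 0$, I would test the defining relation \eqref{j2} for the stationary measure $\mu_\ell$ against $U$ (or a cutoff of $U$): formally
\[
0=\int_\Omega {\cal L}_{A_\ell} U\,\rmd\mu_\ell
=\int_\Omega a^{ij}_\ell\,\partial^2_{ij}U\,\rmd\mu_\ell+\int_\Omega V\cdot\nabla U\,\rmd\mu_\ell .
\]
On the compact set where the limit mass concentrates, $a^{ij}_\ell\to 0$ uniformly (Remark~\ref{null}) while $\partial^2_{ij}U$ is bounded there, so the first integral tends to $0$; meanwhile $V\cdot\nabla U$ is continuous and bounded on that compact set, so by weak$^*$-convergence the second integral tends to $\int_\Omega V\cdot\nabla U\,\rmd\mu$. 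Hence the limit is $0$, as desired. The care needed is that $U$ is a compact function, hence not itself in $C_0^\infty(\Omega)$ and possibly unbounded; I would handle this by multiplying $U$ by a smooth cutoff supported near ${\rm supp}(\mu)$ and using that $\mu(\Omega\setminus\Omega_\rho)$ and, via Proposition~\ref{unif} a) applied to $\mu_\ell$, also $\mu_\ell(\Omega\setminus\Omega_\rho)$ are uniformly small for $\rho$ close to $\rho_M$ — here is exactly where the Lyapunov estimate controls the tails of the $\mu_\ell$ and legitimizes the cutoff/limit interchange. (If ${\rm supp}(\mu)$ sits inside a fixed compact subset of the essential domain, the estimate is only needed to control where the $\mu_\ell$'s mass can escape.)

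Part b) follows from part a) by a localization. Since ${\cal J}_{\cal A}^\Omega$ lies in a compact set ${\cal J}\subset\Omega$ and $U_0$ is an entire weak Lyapunov function on a neighborhood $N$ of ${\cal J}$, for any $\mu\in{\cal M}_{\cal A}^\Omega$ we have ${\rm supp}(\mu)\subset{\cal J}_{\cal A}^\Omega\subset{\cal J}\subset N$, so $\mu$ has compact support contained in $N$. Running the argument of part a) with $\Omega$ replaced by the open set $N$ — the stationary-measure identity \eqref{j2} is tested against functions supported in $N$, and $\mu_\ell^\Omega$ restricted to $N$ still satisfies the needed relation up to the negligible mass outside (again controlled by the tail estimate) — yields $\int_N V\cdot\nabla U_0\,\rmd\mu=0$, hence ${\rm supp}(\mu)\subset\{x\in N: V(x)\cdot\nabla U_0(x)=0\}\cap{\cal J}=S_0$.

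The main obstacle I expect is purely technical: justifying that one may test \eqref{j2} against the (non-compactly-supported, possibly unbounded) Lyapunov function $U$ and then pass to the limit $\ell\to\infty$. This requires (i) an approximation of $U$ by $C_0^\infty$ functions adapted to the sublevel sets $\Omega_\rho$, (ii) a \emph{uniform-in-$\ell$} bound on the mass $\mu_\ell(\Omega\setminus\Omega_\rho)$ that goes to $0$ as $\rho\uparrow\rho_M$, which is supplied by Proposition~\ref{unif} a) once we know $U$ is a uniform Lyapunov function for $\{{\cal L}_{A_\ell}\}$ near $\partial\Omega$ (for the weak case one may instead invoke that ${\cal M}_{\cal A}^\Omega$ consists of compactly supported measures, restricting attention to a fixed compact neighborhood of ${\cal J}$), and (iii) controlling the second-derivative term $a^{ij}_\ell\partial^2_{ij}U$ on the (compact) region that matters. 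Once these are in place, the sign of $V\cdot\nabla U$ does the rest.
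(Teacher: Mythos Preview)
Your core strategy---show $\int_\Omega V\cdot\nabla U\,\rmd\mu=0$ and then use the sign of the integrand---is exactly the paper's. But the ``main obstacle'' you flag is illusory, and the tool you propose for it (Proposition~\ref{unif}~a)) is inapplicable here anyway, since $U$ is only an \emph{entire weak} Lyapunov function: the Lyapunov constant is $\gamma=0$, so that estimate gives you nothing.

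The point is that no tail control on the $\mu_\ell^\Omega$ is needed at all. Since ${\rm supp}(\mu)$ is compact in $\Omega$ and $U$ is a compact function, choose $\rho_0<\rho_M$ with ${\rm supp}(\mu)\subset\Omega_{\rho_0}$ and take a fixed $h\in C_0^\infty(\Omega)$ with $h\equiv U$ on $\Omega_{\rho_0}$. The stationary identity $\int_\Omega{\cal L}_{A_\ell}h\,\rmd\mu_\ell^\Omega=0$ holds exactly (Remark~\ref{tt}~4)). To pass to the limit you need only that $a^{ij}_\ell\to 0$ uniformly on the compact set ${\rm supp}(h)$ (so $\int a^{ij}_\ell\partial^2_{ij}h\,\rmd\mu_\ell^\Omega\to 0$) and that $V\cdot\nabla h\in C_0(\Omega)$ (so weak$^*$-convergence gives $\int V\cdot\nabla h\,\rmd\mu_\ell^\Omega\to\int V\cdot\nabla h\,\rmd\mu$). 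This yields $\int_\Omega V\cdot\nabla h\,\rmd\mu=0$, and since $\nabla h=\nabla U$ on ${\rm supp}(\mu)$ this is the desired $\int_\Omega V\cdot\nabla U\,\rmd\mu=0$. Where the mass of $\mu_\ell^\Omega$ sits outside $\Omega_{\rho_0}$ never enters.

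The paper does precisely this, but packages the limit step as Theorem~\ref{tight0-1}~a), which already gives $\int_\Omega V\cdot\nabla h\,\rmd\mu=0$ for every $h\in C_0^1(\Omega)$ and every $\mu\in{\cal M}_{\cal A}^\Omega$; one then just plugs in the cutoff $h$ above. Part~b) is, as you say, part~a) run in the neighborhood of ${\cal J}$, combined with ${\rm supp}(\mu)\subset{\cal J}_{\cal A}^\Omega\subset{\cal J}$.
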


\begin{proof} To prove a), we let $\rho_0>0$ be such that the  $\rho_0$-sublevel set
$\Omega_{\rho_0}$ of $U$ contains the support of $\mu$. Applying
Theorem~\ref{tight0-1} a) to a function $h\in C^1_0(\Omega)$
satisfying $h\equiv U$ on $\Omega_{\rho_0}$ yields that
\[
0=\int_{\Omega} V(x)\cdot \nabla h(x)~  \rmd\mu(
x)=\int_{\Omega_{\rho_0}} V(x)\cdot \nabla U(x)~\rmd \mu(
x)=\int_{\Omega_{\rho_0}\cap (\Omega\setminus S)} V(x)\cdot \nabla
U(x)~ \rmd\mu( x).
\]
Since $V(x)\cdot \nabla U(x)$ is  of constant sign on $
\Omega_{\rho_0}\cap (\Omega\setminus S)$, $\mu(\Omega_{\rho_0}\cap
(\Omega\setminus S))=0$, i.e., $ {\rm supp}(\mu) \subset
\Omega_{\rho_0}\cap S\subset S$.

To prove b), we note by a) that ${\rm supp}(\mu) \subset S_0=:\{x\in
{\Omega_0}: V(x)\cdot \nabla U_0(x)=0\}$, where $\Omega_0$ is a
neighborhood of $\cal J$ in which $U_0$ is defined. By the
definition of $\mathcal J_{\mathcal A}^\Omega$ in \eqref{S1}, we
also have ${\rm supp}(\mu) \subset \mathcal J$. It follows that
${\rm supp}(\mu) \subset \mathcal J \cap S_0$, from which b)
follows.
\end{proof}

\medskip

 We note that part a) of Proposition~\ref{S-LaSalle} with
$\Omega=\cal U$ is precisely  the part b) of
 Theorem~A.

\subsection{Some general properties on $\cal A$-stability}

 We first give an equivalent condition for a compact
invariant set of $\varphi^t$ to be relatively $\cal A$-stable or
$\cal A$-stable.
\medskip

\begin{Proposition}\label{stable} Let $\Omega\subset \cal U$ be  a connected open set,
$\cal J\subset \Omega$ {\rm (}resp. $\cal J\subset \cal U${\rm )} be
a compact invariant set  of $\varphi^t$,  and $\cal
A=\{A_\alpha\}\subset \tilde{\cal A}$ be an invariant
and admissible null family. Then $\cal J$ is relatively $\cal
A$-stable in $\Omega$ {\rm (}resp. $\cal A$-stable{\rm)} if and only
if the set $\{\mu_\alpha^\Omega\}$ of all normalized {\rm (}resp.
the set $\{\mu_\alpha\}$ of all {\rm
  )} stationary measures corresponding to
  $\{{\cal L}_{A_\alpha}\}$  in $\Omega$ {\rm(}resp. in $\cal U${\rm)}
  is $\cal A$-sequentially null compact in $M(\Omega)$ {\rm (}resp.
   in $M(\cal U)${\rm )} and  ${\cal J}_{\cal A}^\Omega\subset \cal J$ {\rm (}resp.
${\cal J}_{\cal A}\subset \cal J${\rm)}, i.e., ${\rm
supp}(\mu)\subset \cal J$ for all $\mu\in {\cal M}^\Omega_{\cal A}$
{\rm (}resp. $\mu\in {\cal M}_{\cal A}${\rm )}.
\end{Proposition}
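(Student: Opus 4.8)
\textbf{Proof plan for Proposition~\ref{stable}.}
The statement is an equivalence between relative $\cal A$-stability of a compact invariant set $\cal J$ and two conjoined conditions: $\cal A$-sequential null compactness of the normalized stationary measures together with ${\cal J}_{\cal A}^\Omega\subset\cal J$. The plan is to prove the two directions separately, working throughout with the sequential reformulation of the weak$^*$-topology (metrizability of $M(\Omega)$ and the Portmanteau-type characterization in Proposition~\ref{wt-kh}); I will only write out the relative case in $\Omega$, since the $\cal A$-stable case in $\cal U$ follows verbatim with $\Omega=\cal U$ and Remark~\ref{tt} 2) identifying $\mu_\alpha^{\cal U}$ with $\mu_\alpha$.

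\emph{Sufficiency.} Assume $\{\mu_\alpha^\Omega\}$ is $\cal A$-sequentially null compact in $M(\Omega)$ and ${\cal J}_{\cal A}^\Omega\subset\cal J$. To verify relative $\cal A$-stability, fix $\epsilon>0$ and an open neighborhood $W$ of $\cal J$ in $\cal U$, and suppose for contradiction that no $\delta$ works; then there is a sequence $A_\ell\to 0$ in $\cal A$ with $\mu_\ell^\Omega(\Omega\setminus W)\ge\epsilon$ for all $\ell$. By sequential null compactness, after passing to a subsequence $\mu_\ell^\Omega\to\mu$ for some $\mu\in{\cal M}_{\cal A}^\Omega$. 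Since $\Omega\setminus W$ is closed in $\Omega$, Proposition~\ref{wt-kh} 2) gives $\mu(\Omega\setminus W)\ge\limsup_\ell\mu_\ell^\Omega(\Omega\setminus W)\ge\epsilon>0$. But ${\rm supp}(\mu)\subset{\cal J}_{\cal A}^\Omega\subset\cal J\subset W$, so $\mu(\Omega\setminus W)=0$, a contradiction.

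\emph{Necessity.} Assume $\cal J$ is relatively $\cal A$-stable in $\Omega$. First, null compactness: by relative $\cal A$-stability, for every $\epsilon>0$ there is a $\delta>0$ and hence, choosing $W$ to be a fixed pre-compact open neighborhood of $\cal J$ with $\overline W\subset\cal U$, a tail of the net along which $\mu_\alpha^\Omega(\Omega\setminus W)<\epsilon$; since $\overline W\cap\cal U$ is compact this shows that any sequence $\{\mu_\ell^\Omega\}$ with $A_\ell\to 0$ is tight, hence relatively compact in $M(\Omega)$ by Prokhorov's theorem. Second, ${\cal J}_{\cal A}^\Omega\subset\cal J$: let $\mu\in{\cal M}_{\cal A}^\Omega$, say $\mu_\ell^\Omega\to\mu$ with $A_\ell\to 0$. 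For any open neighborhood $W$ of $\cal J$, relative $\cal A$-stability gives $\mu_\ell^\Omega(\Omega\setminus W)\to 0$, so by Proposition~\ref{wt-kh} 3) applied to the open set $\Omega\setminus\overline W\subset\Omega\setminus W$ (or directly 2) applied to the closed set $\Omega\cap\overline{W}^{\,c}$) we get $\mu(\Omega\setminus W)=0$, i.e. ${\rm supp}(\mu)\subset\overline W\cap\Omega$. Intersecting over a neighborhood basis $\{W_k\}$ of the compact set $\cal J$ and using $\bigcap_k\overline{W_k}=\cal J$ (taking $W_k$ shrinking to $\cal J$), we conclude ${\rm supp}(\mu)\subset\cal J$; taking the closed union over all such $\mu$ gives ${\cal J}_{\cal A}^\Omega\subset\cal J$.

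\emph{Main obstacle.} The routine topology is straightforward; the one point requiring care is the interplay between the net $\{\mu_\alpha^\Omega\}$ (used in the definition of relative $\cal A$-stability) and the sequences $\{\mu_\ell^\Omega\}$ (used in the definition of $\cal A$-limit measures and null compactness). One must make sure that ``$\mu_\alpha^\Omega(\Omega\setminus W)<\epsilon$ whenever $d(A_\alpha,0)<\delta$'' transfers correctly to sequences with $A_\ell\to 0$ — which it does, since $d(A_\ell,0)\to 0$ forces $d(A_\ell,0)<\delta$ eventually — and, in the necessity direction, that a single pre-compact $W$ suffices for tightness while a shrinking family $\{W_k\}$ is needed to pin the support down to $\cal J$ exactly. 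No deeper analytic input (no PDE regularity, no Harnack estimate) is needed here; the statement is essentially a packaging of Prokhorov's theorem and the Portmanteau characterization around the definitions in Section~2.
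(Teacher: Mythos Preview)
Your proof is correct and follows essentially the same approach as the paper: both directions use Prokhorov's theorem and the Portmanteau characterization (Proposition~\ref{wt-kh}), with the sufficiency argued by contradiction via a convergent subsequence and the necessity split into tightness plus a shrinking-neighborhood argument for the support. The only cosmetic differences are that the paper works with compact neighborhoods $W$ of $\cal J$ (so that $\Omega\setminus W$ is open and Proposition~\ref{wt-kh}~3) applies directly) and explicitly handles the finitely many initial measures when verifying tightness, whereas you use open $W$ and pass to $\overline W$; these are equivalent routine variations.
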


\begin{proof} To show the sufficiency, we suppose for contradiction
that $\cal J$ is not relatively  $\cal A$-stable in $\Omega$. Then
there exists an $\epsilon_0>0$, an open neighborhood $W$ of
$\mathcal{J}$ in $\Omega$ and a sequence $\{A_l\}\subset \cal A$
such that $A_l\to 0$ but $\mu^\Omega_{A_l}(\Omega\setminus W)\ge
\epsilon_0$ for all $l$. Since $\{\mu^\Omega_{A_l}\}$ is relatively
sequentially compact, we may assume without loss of generality that
$\mu^\Omega_{A_l}$ converges, say, to some $\mu\in {\cal
M}^\Omega_{\cal A}$. Since $\Omega\setminus W$ is a closed subset of
$\Omega$ under the restricted topology, Proposition \ref{wt-kh}
implies that $\mu(\Omega\setminus W)\ge \limsup_{l\rightarrow
\infty} \mu^\Omega_{A_l}(\Omega\setminus W)\ge \epsilon_0$. Thus,
$\text{supp}(\mu) \nsubseteq \cal J$, a contradiction to the fact
that ${\cal J}_{\cal A}^\Omega\subset \cal J$.

To show the necessity, we let $\{A_l\}\subset \cal A$ be a sequence such that $A_l\to
 0$.  Let $\epsilon>0$ be
given. We also take any  compact neighborhood $W$ of $\cal J$ in
$\Omega$. Since $\cal J$ is relatively $\cal A$-stable, there exists
a positive integer $N$ such that
\begin{equation} \label{T2}
\mu_{A_\ell}^\Omega(\Omega\setminus W)<\epsilon, \qquad\quad \ell\ge
N,
\end{equation}
 i.e., $\mu_{A_\ell}^\Omega(W)\ge 1-\epsilon$ for
all $\ell\ge N$. Since each $\mu_{A_\ell}^\Omega$ is a Borel regular
measure, there is a compact subset $B$ of $\Omega$ such that
$\mu_{A_\ell}^\Omega(B)\ge 1-\epsilon$ for all $1\le \ell \le N$.
 Let $K=B\cup W$. Then $K$ is a compact subset of $\Omega$ and  $\mu_{A_\ell}^\Omega(K)\ge 1-\epsilon$ for all $\ell$.
 This shows that $\{\mu_{A_\ell}^\Omega\}_{\ell=1}^\infty$ is tight, hence relatively  compact in
$M(\Omega)$.

Let $\mu\in {\cal M}^\Omega_{\cal A}$. Then there exists a sequence
$\{A_l\}\subset \cal A$ with $A_l\to 0$ such that
$\lim_{\ell\rightarrow \infty}\mu_{A_\ell}^\Omega=\mu$ under the
weak$^*$-topology. Again, using the relative $\cal A$-stability of
$\cal J$ in $\Omega$, we see that for any $\epsilon>0$ and any
compact neighborhood $W$ of $\mathcal{J}$ in $\Omega$, \eqref{T2}
holds with the present $\epsilon, W$, and $\{\mu_{A_\ell}^\Omega\}$.
  Since
$\Omega\setminus
 W$ is an open subset of $\Omega$, Proposition \ref{wt-kh} implies that
 $$\mu(\Omega\setminus
 W)\le \liminf_{\ell\rightarrow \infty}\mu_{A_\ell}^\Omega(\Omega\setminus
 W)\le \epsilon.$$
 Since $\epsilon$ and $W$ are arbitrary,  we have $\mu(\Omega\setminus \mathcal{J})=0$ by the continuity of probability measures,
 i.e., ${\rm supp}(\mu)\subset \cal J$.
\end{proof}

\begin{Corollary}\label{min-A-S} Assume $V\in C^1(\cal U,\R^n)$. Let  $\cal
A=\{A_\alpha\}\subset \tilde{\cal A}$ be an invariant
and admissible null family and $\Omega\subset \cal U$ be a
connected open set. If ${\cal J}_{\cal A}^\Omega$ {\rm (}resp.
${\cal J}_{\cal A}${\rm )} is a relatively $\cal A$-stable set in
$\Omega$ {\rm (}resp. $\cal A$-stable set{\rm )}, then it is the
smallest relatively $\cal A$-stable set in $\Omega$ {\rm (}resp.
$\cal A$-stable set{\rm )}, i.e., it contains no proper relatively
$\cal A$-stable subset in $\Omega$ {\rm (}resp. $\cal A$-stable
set{\rm )}.
\end{Corollary}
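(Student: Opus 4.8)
The plan is to read the corollary off the characterization of relative $\cal A$-stability proved in Proposition~\ref{stable}, since essentially all the content is the minimality assertion. First I would recall that, by Definition~\ref{2.5}, the phrase ``relatively $\cal A$-stable subset'' only makes sense for a compact invariant set of $\varphi^t$ (this is where the hypothesis $V\in C^1(\cal U,\R^n)$ is used: it ensures $\varphi^t$ is a genuine local flow, so such sets and Proposition~\ref{stable} are available), and that ${\cal J}_{\cal A}^\Omega$ is by \eqref{S1} the closure of $\bigcup\{{\rm supp}(\mu):\mu\in{\cal M}_{\cal A}^\Omega\}$.

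For the minimality I would argue by contradiction. Suppose $\cal K$ is a proper relatively $\cal A$-stable subset of ${\cal J}_{\cal A}^\Omega$ in $\Omega$, so $\cal K$ is a compact invariant set of $\varphi^t$ with $\cal K\subsetneq{\cal J}_{\cal A}^\Omega$. Applying Proposition~\ref{stable} with $\cal J=\cal K$, the relative $\cal A$-stability of $\cal K$ forces ${\cal J}_{\cal A}^\Omega\subset\cal K$. Together with $\cal K\subsetneq{\cal J}_{\cal A}^\Omega$ this is a contradiction, so no such $\cal K$ exists. The same application shows more generally that every relatively $\cal A$-stable set $\cal J$ in $\Omega$ satisfies ${\cal J}_{\cal A}^\Omega\subset\cal J$; hence, under the standing hypothesis that ${\cal J}_{\cal A}^\Omega$ is itself relatively $\cal A$-stable, it is the minimum of the family of all relatively $\cal A$-stable sets in $\Omega$, which is precisely the ``smallest'' assertion. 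The $\cal A$-stable case is identical after replacing $\Omega$ by $\cal U$, the normalized measures $\{\mu_\alpha^\Omega\}$ by the stationary measures $\{\mu_\alpha\}$, and ${\cal M}_{\cal A}^\Omega$, ${\cal J}_{\cal A}^\Omega$ by ${\cal M}_{\cal A}$, ${\cal J}_{\cal A}$.

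I do not anticipate any real obstacle here: once Proposition~\ref{stable} is in hand the argument is a one-line deduction together with the tautology ${\cal J}_{\cal A}^\Omega\subset{\cal J}_{\cal A}^\Omega$. The only delicate points are bookkeeping, namely being explicit that ``subset'' is understood to mean a compact invariant subset (the only setting in which Definition~\ref{2.5} gives the notion a meaning), and that the closure appearing in \eqref{S1} causes no difficulty since relatively $\cal A$-stable sets are compact, hence closed — neither of these requires further argument.
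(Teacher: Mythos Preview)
Your argument is correct and is exactly the approach the paper takes: the corollary is recorded as an immediate consequence of Proposition~\ref{stable}, and your write-up simply makes explicit the one-line deduction (any relatively $\cal A$-stable set $\cal K$ satisfies ${\cal J}_{\cal A}^\Omega\subset\cal K$) that the paper leaves implicit.
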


\begin{proof}
It follows from Proposition~\ref{stable} immediately.
\end{proof}

The following result establishes some connections between
(relatively) $\cal A$-stable sets and measures.
\medskip

\begin{Proposition}~\label{supp-mu} Let $\cal
A=\{A_\alpha\}\subset \tilde{\cal A}$ be an invariant
and admissible null family and $\Omega\subset \cal U$ be  a
connected open set. Then the following holds.
\begin{itemize}
\item[{\rm a)}] Let $\mu\in M(\Omega)$ {\rm (}resp. $\mu\in M(\cal
U)${\rm )} be an invariant measure of $\varphi^t$ with compact
support. If $\mu$ is relatively $\cal A$-stable in $\Omega$ {\rm
(}resp. $\cal A$-stable{\rm)}, then so is ${\rm supp} (\mu)$ as a
compact invariant set.
\item[{\rm b)}] Assume $V\in C^1(\cal U,\R^n)$. Let $\cal J\subset
\Omega$ {\rm(}resp. $\cal J\subset \cal U${\rm )} be a compact
invariant set of $\varphi^t$ which is uniquely ergodic and contained
in an open, pre-compact, positively or negatively invariant subset
of $\varphi^t$ in $\Omega$ {\rm (}resp. in $\cal U${\rm )}. If $\cal
J$ is relatively $\cal A$-stable in $\Omega$ {\rm (}resp. $\cal
A$-stable{\rm)}, then so is the unique ergodic measure on $\cal J$.
\end{itemize}
\end{Proposition}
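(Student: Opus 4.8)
The plan is to reduce both parts to the characterizations of relative $\cal A$-stability and of invariant/semi-invariant measures established earlier, namely Proposition~\ref{stable}, Theorem~\ref{tight0-1}, and Propositions~\ref{simple-1}, \ref{simple-1-1}, \ref{invariant-measure}. For part a), suppose $\mu$ is an invariant measure of $\varphi^t$ with compact support that is relatively $\cal A$-stable in $\Omega$, i.e., $\mu_\alpha^\Omega\to\mu$ in $M(\Omega)$ as $A_\alpha\to 0$. First I would observe that convergence of the entire net $\{\mu_\alpha^\Omega\}$ forces $\cal M_{\cal A}^\Omega=\{\mu\}$ and in particular $\{\mu_\alpha^\Omega\}$ is $\cal A$-sequentially null compact in $M(\Omega)$; hence $\cal J_{\cal A}^\Omega={\rm supp}(\mu)$. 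Since ${\rm supp}(\mu)$ is a compact invariant set (invariance of the support of an invariant measure is standard for a flow), Proposition~\ref{stable} applies verbatim with $\cal J={\rm supp}(\mu)$: the two hypotheses ``$\{\mu_\alpha^\Omega\}$ is $\cal A$-sequentially null compact'' and ``$\cal J_{\cal A}^\Omega\subset{\rm supp}(\mu)$'' both hold, so ${\rm supp}(\mu)$ is relatively $\cal A$-stable in $\Omega$. The $\cal A$-stable case $\Omega=\cal U$ is the special case with $\mu_\alpha^{\cal U}=\mu_\alpha$.

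For part b), assume $V\in C^1(\cal U,\R^n)$ and that $\cal J\subset\Omega$ is compact, invariant, uniquely ergodic with unique ergodic measure $\mu$, and contained in an open, pre-compact, positively or negatively invariant set $\tilde\Omega\subset\Omega$. Suppose $\cal J$ is relatively $\cal A$-stable in $\Omega$. By Proposition~\ref{stable}, $\{\mu_\alpha^\Omega\}$ is $\cal A$-sequentially null compact and $\cal J_{\cal A}^\Omega\subset\cal J$; in particular $\cal M_{\cal A}^\Omega$ is nonempty and every $\nu\in\cal M_{\cal A}^\Omega$ has ${\rm supp}(\nu)\subset\cal J\subset\tilde\Omega$. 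Now I would invoke Theorem~\ref{tight0-1}~c): since $\tilde\Omega$ is an open, pre-compact, positively (resp. negatively) invariant set containing ${\rm supp}(\nu)$, each such $\nu$ is an invariant measure of $\varphi^t$ on $\Omega$, supported in $\cal J$. By unique ergodicity of $\cal J$, there is only one $\varphi^t$-invariant Borel probability measure carried by $\cal J$, namely $\mu$; hence $\cal M_{\cal A}^\Omega=\{\mu\}$. Combined with $\cal A$-sequential null compactness of $\{\mu_\alpha^\Omega\}$, this gives that every subnet of $\{\mu_\alpha^\Omega\}$ has a further subnet converging to $\mu$, so the whole net converges: $\mu_\alpha^\Omega\to\mu$ in $M(\Omega)$. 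That is exactly the definition of $\mu$ being relatively $\cal A$-stable in $\Omega$. Again $\Omega=\cal U$ gives the $\cal A$-stable statement.

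The main obstacle I anticipate is the ``net versus sequence'' bookkeeping in deducing full net-convergence from sequential compactness plus uniqueness of the sequential limit point. The clean way to handle it is: $M(\Omega)$ is metrizable, and $\cal A$-sequential null compactness says every sequence $\{\mu_{A_\ell}^\Omega\}$ with $A_\ell\to 0$ is relatively compact; since all its sequential limit points lie in $\cal M_{\cal A}^\Omega=\{\mu\}$, every such sequence converges to $\mu$; and in a metric space, if every sequence extracted along $A_\alpha\to 0$ converges to $\mu$ then the net itself converges to $\mu$ (otherwise some $\epsilon$-ball around $\mu$ is exited cofinally, yielding a bad sequence). A secondary point to verify carefully in part~a) is that relative $\cal A$-stability of the \emph{measure} $\mu$ (net convergence $\mu_\alpha^\Omega\to\mu$) indeed implies $\cal M_{\cal A}^\Omega=\{\mu\}$ and sequential null compactness — this is immediate from the metrizability of $M(\Omega)$ and the fact that every sequence along $A_\ell\to 0$ is a sequence in a convergent net, hence converges to the same limit $\mu$, so it is in particular relatively compact. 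No genuinely hard estimate is needed; everything rests on the earlier structural results.
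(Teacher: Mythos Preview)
Your proposal is correct. For part~b) it is essentially the paper's argument, just organized as a direct proof rather than a contradiction: the paper assumes $\nu$ is not the limit, extracts via Proposition~\ref{stable} a convergent subsequence with limit $\nu^*\ne\nu$, and then uses Theorem~\ref{tight0-1}~c) plus unique ergodicity to force $\nu^*=\nu$; you instead show $\cal M_{\cal A}^\Omega=\{\mu\}$ up front and then pass from sequential compactness with a unique limit point to net convergence via metrizability---same content.

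For part~a) there is a small methodological difference worth noting. The paper does not route through Proposition~\ref{stable}; it argues directly from Definition~\ref{2.5}~1) using the Portmanteau theorem (Proposition~\ref{wt-kh}): for any open neighborhood $W$ of ${\rm supp}(\mu)$, since $\Omega\setminus W$ is closed one has $\limsup_{A_\alpha\to 0}\mu_\alpha^\Omega(\Omega\setminus W)\le \mu(\Omega\setminus W)=0$, which is exactly the $\epsilon$--$\delta$ condition for relative $\cal A$-stability of the compact invariant set ${\rm supp}(\mu)$. Your route---observe $\cal M_{\cal A}^\Omega=\{\mu\}$, hence $\cal J_{\cal A}^\Omega={\rm supp}(\mu)$, then invoke Proposition~\ref{stable}---is equally valid and perhaps conceptually cleaner since it identifies ${\rm supp}(\mu)$ as the \emph{minimal} relatively $\cal A$-stable set (cf.\ Corollary~\ref{min-A-S}), but the paper's direct Portmanteau computation is a couple of lines shorter and avoids the net/sequence bookkeeping you flag.
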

\begin{proof}   Denote $\{\mu_\alpha^\Omega\}$ as the set of all
normalized  stationary measures in $\Omega$ corresponding to the
family $\{{\cal L}_{A_\alpha}\}$.

a) We note   that
 ${\rm supp} (\mu)$ is a compact invariant set  of
$\varphi^t$ in $\Omega$. Since $\mu$ is relatively $\cal A$-stable
in $\Omega$, $\{\mu_\alpha^\Omega\}$  converges to $\mu$ in
$M(\Omega)$ as $A_\alpha \to 0$. Let $W$ be an open neighborhood of
$\text{supp}(\mu)$ in $\Omega$. Then it follows from Proposition
\ref{wt-kh} and the fact $\mu(\Omega\setminus \text{supp}(\mu))=0$
that
$$\limsup_{A_\alpha \to 0}\mu_\alpha^{\Omega}(\Omega\setminus W)\le \mu(\Omega\setminus W)=0.$$
 Thus for any $\epsilon>0$, there exists $\delta>0$ such that
$\mu_\alpha^\Omega(\Omega\setminus
 W)<\epsilon$  whenever $d(A_\alpha,0)<\delta$, i.e., ${\rm supp}
 (\mu)$ is relatively $\cal A$-stable in $\Omega$.

b) Let $\nu$ be the unique invariant measure of $\varphi^t$ on
$\mathcal{J}$. Suppose for contradiction that  $\nu$ is not
relatively  ${\cal A}$-stable in $\Omega$. Then there are sequences
 $\{A_l=(a^{ij}_l)\}_{\ell=1}^\infty\subset \cal A$ and
$\{\mu_l^\Omega\}\subset \{\mu_\alpha^\Omega\}$ such that $A_l\to 0$
but $\mu_l^\Omega$ is not convergent to $\nu$ in $M(\Omega)$, as
$l\to\infty$. Since $\cal J$ is relatively $\cal A$-stable in
$\Omega$, Proposition~\ref{stable} implies that
$\{\mu_{A_\ell}^\Omega\}$ is relatively sequentially compact.
Without loss of generality, we assume  $\lim_{\ell\to\infty}
\mu_{A_\ell}^\Omega=\nu^*\ne \nu$. By Theorem~\ref{tight0-1} c) and
Proposition~\ref{stable},   $\nu^*$ is an invariant measure of
$\varphi^t$ on $\cal J$. But since $\cal J$ is uniquely ergodic, we
must have $\nu^*=\nu$, a contradiction.
\end{proof}


We now consider the case when there exists a uniform Lyapunov
function corresponding to $\{{\cal L}_{A_\alpha}\}$
 in $\Omega$. In this case, we recall from
Proposition~\ref{UNIF} that $\varphi^t$ must be dissipative in
$\Omega$, in particular, $\Omega$ is a positively invariant set and
contains a maximal attractor of $\varphi^t$
(Proposition~\ref{omega-set1}).

\medskip
\begin{Lemma}\label{tight} Let  $\cal A=\{A_\alpha\}\subset \tilde{\cal A}$ be an admissible null family
 and $\Omega\subset\cal U$ be a connected open set. Suppose that
there is a  uniform Lyapunov function in $\Omega$ with respect to
$\{{\cal L}_{A_\alpha}\}$ with  essential lower bound $\rho_m$ and
upper bound $\rho_M$.  Then the following holds.

\begin{itemize}
\item[{\rm a)}] The family $\{\mu_\alpha^\Omega\}$ of all normalized stationary measures in $\Omega$
corresponding to
 $\{{\cal L}_{A_\alpha}\}$  is $\cal A$-sequentially null compact in $M({\Omega})$;

\item[{\rm b)}] ${\rm supp} (\mu)\subset
\bar\Omega_{\rho_m}$ for any  $\mu\in {\cal M}_{\cal A}^\Omega$,
where $\Omega_{\rho_m}$ denotes the $\rho_m$-sublevel set of $U$.
\end{itemize}
\end{Lemma}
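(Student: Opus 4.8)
\textbf{Proof plan for Lemma~\ref{tight}.}
The plan is to derive both statements from the quantitative measure estimate in Proposition~\ref{unif}~a), applied uniformly across the family $\{{\cal L}_{A_\alpha}\}$. First I would fix the uniform Lyapunov function $U$ in $\Omega$, with essential lower bound $\rho_m$, essential upper bound $\rho_M$, and uniform Lyapunov constant $\gamma>0$ (independent of $\alpha$, by Definition~\ref{Definition1200}). For each $\alpha$ and each stationary measure $\mu_\alpha$ corresponding to ${\cal L}_{A_\alpha}$, the normalized measure $\mu_\alpha^\Omega$ is itself a stationary measure corresponding to ${\cal L}_{A_\alpha}$ in $\Omega$ (Remark~\ref{tt}~4)), so Proposition~\ref{unif}~a) applies to it: for any $\rho_0\in(\rho_m,\rho_M)$ there is a constant $C_{\rho_m,\rho_0}>0$ depending only on $\rho_m,\rho_0$ with
\[
\mu_\alpha^\Omega(\Omega\setminus\Omega_{\rho_0})\le \gamma^{-1}C_{\rho_m,\rho_0}\,|A_\alpha|_{C(\Omega_{\rho_0}\setminus\Omega_{\rho_m})}\,|\nabla U|^2_{C(\Omega_{\rho_0}\setminus\Omega_{\rho_m})}\,\mu_\alpha^\Omega(\Omega_{\rho_0}\setminus\Omega_{\rho_m}).
\]
Since $\Omega_{\rho_0}\setminus\Omega_{\rho_m}$ is a compact subset of $\cal U$ and $|\nabla U|$ is continuous (hence bounded) on it, the only $\alpha$-dependent factor on the right that matters is $|A_\alpha|_{C(\Omega_{\rho_0}\setminus\Omega_{\rho_m})}$, and by Remark~\ref{null} this tends to $0$ as $A_\alpha\to 0$ along the null family. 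Bounding the last measure factor by $1$, I get $\mu_\alpha^\Omega(\Omega\setminus\Omega_{\rho_0})\to 0$ as $A_\alpha\to 0$, uniformly in the sense that it is controlled by $|A_\alpha|$ on the relevant compact set.

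For part a), let $\{A_\ell\}\subset\cal A$ with $A_\ell\to 0$. Given $\epsilon>0$, choose $\rho_0\in(\rho_m,\rho_M)$ arbitrarily (say $\rho_0=(\rho_m+\rho_M)/2$); the estimate above gives an $N$ such that $\mu_{A_\ell}^\Omega(\Omega\setminus\Omega_{\rho_0})<\epsilon$ for all $\ell\ge N$. For the finitely many $\ell<N$, each $\mu_{A_\ell}^\Omega$ is Borel regular, so there is a compact $B_\ell\subset\Omega$ with $\mu_{A_\ell}^\Omega(\Omega\setminus B_\ell)<\epsilon$. Setting $K=\overline{\Omega_{\rho_0}}\cup B_1\cup\cdots\cup B_{N-1}$ — a compact subset of $\Omega$, since $\overline{\Omega_{\rho_0}}$ is compact in $\cal U$ and contained in $\Omega$ — we obtain $\mu_{A_\ell}^\Omega(\Omega\setminus K)<\epsilon$ for all $\ell$. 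Hence $\{\mu_{A_\ell}^\Omega\}$ is tight, so relatively compact in $M(\Omega)$ by Prokhorov's theorem; this is exactly $\cal A$-sequential null compactness. This argument parallels the necessity direction of Proposition~\ref{stable}.

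For part b), let $\mu\in{\cal M}_{\cal A}^\Omega$, so $\mu=\lim_\ell \mu_{A_\ell}^\Omega$ in $M(\Omega)$ for some sequence $A_\ell\to 0$. Fix any $\rho_0\in(\rho_m,\rho_M)$. Since $\Omega\setminus\Omega_{\rho_0}$ is closed in $\Omega$, Proposition~\ref{wt-kh}~2) gives $\mu(\Omega\setminus\Omega_{\rho_0})\le\limsup_\ell\mu_{A_\ell}^\Omega(\Omega\setminus\Omega_{\rho_0})=0$, so $\mu(\Omega\setminus\Omega_{\rho_0})=0$, i.e.\ $\mu(\Omega_{\rho_0})=1$. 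Letting $\rho_0\downarrow\rho_m$ along a sequence and using continuity of $\mu$ from above (the sets $\Omega_{\rho_0}$ decrease to $\bigcap_{\rho_0>\rho_m}\Omega_{\rho_0}=\overline{\Omega_{\rho_m}}$, since $U$ is continuous), we conclude $\mu(\overline{\Omega_{\rho_m}})=1$; equivalently ${\rm supp}(\mu)\subset\overline{\Omega_{\rho_m}}$. The only mild technical point to check carefully is the set identity $\bigcap_{\rho_0>\rho_m}\{U<\rho_0\}=\{U\le\rho_m\}=\overline{\Omega_{\rho_m}}$, which follows from continuity of $U$, and the fact that $\overline{\Omega_{\rho_m}}$ is indeed compact and contained in $\cal U$ (so that all the measure-theoretic manipulations are legitimate); this is where I expect to spend the most care, though it is routine given that $U$ is a compact function.
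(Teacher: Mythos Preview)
Your proposal is correct and follows essentially the same approach as the paper: both derive everything from the quantitative estimate of Proposition~\ref{unif}~a), use it to get tightness along any sequence $A_\ell\to 0$ (handling the finitely many initial terms by Borel regularity), and then pass to the limit via Proposition~\ref{wt-kh}. The only cosmetic difference is that in part~b) the paper applies the open-set/liminf version of Portmanteau to $\Omega\setminus\bar\Omega_{\rho_0}$ while you apply the closed-set/limsup version to $\Omega\setminus\Omega_{\rho_0}$; these are equivalent, and your explicit flagging of the set identity $\bigcap_{\rho_0>\rho_m}\Omega_{\rho_0}=\{U\le\rho_m\}$ is in fact slightly more careful than the paper's one-line appeal to ``continuity of probability measures.''
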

\begin{proof} Denote $\gamma$ as the uniform Lyapunov constant of
$U$ and  $\Omega_\rho$ as the $\rho$-sublevel set of $U$ in $\Omega$
for each $\rho\in [\rho_m,\rho_M)$.

 To prove a), we let $\{A_l=(a^{ij}_l)\}\subset \cal A$ be a sequence with $A_l\to 0$ as
$l\to\infty$, and denote $\mu_l:=\mu_{\ell}^\Omega$ for all $l$.
 For a
fixed $\rho_0\in (\rho_m,\rho_M)$, we have by Proposition \ref{unif} a) that
\begin{equation}\label{Est1}
\mu_l(\Omega\setminus \bar\Omega_{\rho_0})\le\mu_{l}(\Omega\setminus
\Omega_{\rho_0})\le
\gamma^{-1}C_{\rho_m,\rho_0}|A_{l}|_{C(\Omega_{\rho_0}\setminus\Omega_{\rho_m})}
|\nabla
U|^2_{C(\Omega_{\rho_0}\setminus\Omega_{\rho_m})}\mu_{l}(\Omega_{\rho_0}\setminus\Omega_{\rho_m}),
\end{equation}
for all $l=1,2,\cdots$, where $C_{\rho_m,\rho_0}$ is a constant
depending only on $\Omega,\rho_0,\rho_m$. It
follows that for any $\epsilon>0$, there is a natural number $l_0$
such that
$$\mu_l(\Omega\setminus \bar\Omega_{\rho_0})<\epsilon, \qquad  l=l_0+1,\cdots.
$$  For each $l=1,\cdots,
l_0$, clearly we can find a compact set $S_l\subset \Omega$ such
that $\mu_l(\Omega\setminus S_l)<\epsilon$. Let $K_\epsilon :=
\bar\Omega_{\rho_0}\cup (\bigcup_{l=1}^{l_0} S_l)$. Then
$\mu_l({\Omega}\setminus K_\epsilon)<\epsilon$ for all $l$. This
shows that the sequence $\{\mu_l\}$ is tight, hence relatively
sequentially compact in $M({\Omega})$.

To show b), we note that ${\Omega}\setminus \bar\Omega_{\rho_0}$ is
open. Applying Proposition \ref{wt-kh}, we have by  taking limit
$l\to\infty$ in \eqref{Est1} that
\begin{equation}\label{t3}
0=\liminf_{l\to\infty} \mu_l ({\Omega}\setminus
\bar\Omega_{\rho_0})\ge \mu({\Omega}\setminus \bar\Omega_{\rho_0}).
\end{equation}
This implies ${\rm supp} (\mu)\subset \bar\Omega_{\rho_m}$ by the
continuity of probability measures and the arbitrariness of
$\rho_0\in (\rho_m,\rho_M)$.
\end{proof}
\medskip

Note that Theorem B a) is just  Lemma \ref{tight} a) when taking
$\Omega=\cal U$.

\medskip

\begin{Theorem}\label{t410} Assume $V\in C^1(\cal U,\R^n)$. Let  $\cal A=\{A_\alpha\}\subset \tilde{\cal A}$ be
an admissible null family
 and $\Omega\subset\cal U$ be a connected open set. If there is a
uniform Lyapunov function in $\Omega$ with respect to $\{{\cal
L}_{A_\alpha}\}$, then the following holds:
\begin{itemize}
\item[{\rm  a)}] ${\cal M}^\Omega_{\cal A}\ne\emptyset$ and each $\mu\in {\cal
M}_{\cal A}^\Omega$ is an invariant measure of $\varphi^t$ in
$\Omega$ supported on  $ \cal J$ - the maximal attractor of
$\varphi^t$ in $\Omega$.
\item[{\rm b)}] If $\cal A$ is invariant,
then ${\cal J}_{\cal A}^\Omega$ and $\cal J$ are the smallest and
largest relatively $\cal A$-stable sets in $\Omega$, respectively.
\end{itemize}
\end{Theorem}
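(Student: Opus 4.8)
The plan is to assemble the statement from the preparatory results already at hand. For part~a), I would first invoke Lemma~\ref{tight}~a) to see that $\{\mu_\alpha^\Omega\}$ is $\cal A$-sequentially null compact in $M(\Omega)$; since, as noted in Remark~\ref{tt}~1), every $\mu_\alpha(\Omega)>0$, the family of normalized stationary measures is genuinely non-empty, so ${\cal M}_{\cal A}^\Omega\ne\emptyset$. By Lemma~\ref{tight}~b), any $\mu\in {\cal M}_{\cal A}^\Omega$ has ${\rm supp}(\mu)\subset \bar\Omega_{\rho_m}$, which is a compact subset of $\Omega$ (the uniform Lyapunov function being a compact function). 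By Proposition~\ref{UNIF}, $U$ is a Lyapunov function of \eqref{ode} in $\Omega$, so by Proposition~\ref{dissip}/\ref{omega-set1} the flow $\varphi^t$ is dissipative in $\Omega$ with a maximal attractor $\cal J$, and the sublevel set $\bar\Omega_{\rho_m}$ is absorbed into a pre-compact positively invariant set lying in $\Omega$. This puts us exactly in the hypotheses of Theorem~\ref{tight0-1}~c): ${\rm supp}(\mu)$ lies inside an open, pre-compact, positively invariant set of $\varphi^t$ in $\Omega$, hence $\mu$ is an invariant measure of $\varphi^t$ on $\Omega$. Finally, since the only compact invariant set contained in the basin that the Lyapunov function governs is (contained in) the maximal attractor $\cal J$, and an invariant measure must be supported on a compact invariant set, we conclude ${\rm supp}(\mu)\subset \cal J$; this last inclusion uses the LaSalle-type argument (Proposition~\ref{S-LaSalle} or directly the structure of $\omega$-limit sets of $\bar\Omega_{\rho_m}$ in Proposition~\ref{omega-set1}).

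For part~b), assume in addition that $\cal A$ is invariant (and it is admissible by Remark~\ref{rk2.3}~1), since a uniform Lyapunov function exists). By part~a) together with Lemma~\ref{tight}~a), the hypotheses of Proposition~\ref{stable} are met with the role of the compact invariant set played by $\cal J$: the normalized stationary measures are $\cal A$-sequentially null compact and, by part~a), ${\cal J}_{\cal A}^\Omega\subset \cal J$. Hence $\cal J$ is relatively $\cal A$-stable in $\Omega$. That ${\cal J}_{\cal A}^\Omega$ is itself relatively $\cal A$-stable follows from Proposition~\ref{stable} again (it trivially contains its own support closure), and then Corollary~\ref{min-A-S} gives that ${\cal J}_{\cal A}^\Omega$ is the \emph{smallest} relatively $\cal A$-stable set in $\Omega$. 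For the ``largest'' claim, I would argue that any relatively $\cal A$-stable compact invariant set $\cal K\subset\Omega$ must, by Proposition~\ref{stable}, satisfy ${\cal J}_{\cal A}^\Omega\subset\cal K$; but every relatively $\cal A$-stable set must also be an invariant set on which the limit measures concentrate, and combined with part~a) (limit measures live on $\cal J$) plus the definition of the maximal attractor as the largest compact invariant set in $\Omega$, one gets $\cal K\subset \cal J$. So $\cal J$ is the largest.

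The main obstacle I anticipate is the final ``largest relatively $\cal A$-stable set'' assertion in~b): Proposition~\ref{stable} and Corollary~\ref{min-A-S} give minimality cleanly, but maximality requires knowing that \emph{every} relatively $\cal A$-stable set is in particular a compact invariant set contained in $\Omega$ and hence inside the maximal attractor $\cal J$ — this presumably follows from the definition of relative $\cal A$-stability (a relatively $\cal A$-stable set is by Definition~\ref{2.5} a compact invariant set of $\varphi^t$ in $\Omega$) combined with Proposition~\ref{omega-set1} characterizing $\cal J$ as the maximal compact invariant set of the dissipative flow in $\Omega$. The bookkeeping between ``set'' statements and ``measure'' statements, and making sure the pre-compact positively invariant neighborhood needed for Theorem~\ref{tight0-1}~c) genuinely sits inside $\Omega$ (which it does because $U$ is a compact function, so $\bar\Omega_\rho\Subset\Omega$ for $\rho<\rho_M$), are the points that need care but no new ideas.
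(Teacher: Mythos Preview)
Your proposal is correct and follows essentially the same route as the paper: Lemma~\ref{tight} for non-emptiness and support in $\bar\Omega_{\rho_m}$, Proposition~\ref{UNIF} to get that a sublevel set $\Omega_\rho$ ($\rho_m<\rho<\rho_M$) is an open, pre-compact, positively invariant set containing ${\rm supp}(\mu)$, then Theorem~\ref{tight0-1}~c) for invariance of $\mu$; for part~b), Proposition~\ref{stable}, Lemma~\ref{tight}~a), Corollary~\ref{min-A-S}, and the fact that $\cal J$ is the largest compact invariant set in $\Omega$. One small correction: the citation of Proposition~\ref{S-LaSalle} for ${\rm supp}(\mu)\subset\cal J$ is off, since that result only gives concentration on the zero set $\{V\cdot\nabla U=0\}$, not on $\cal J$; the paper instead invokes Proposition~\ref{invariant-measure} (a positively invariant measure on a pre-compact positively invariant set is supported on its $\omega$-limit set, here $\omega(\Omega_\rho)=\cal J$), which is exactly your alternative argument via ``${\rm supp}(\mu)$ is a compact invariant set, hence contained in the maximal one.''
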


\begin{proof} a) Denote by $\{\mu_\alpha^\Omega\}$ the set of all normalized stationary measures in $\Omega$ corresponding to
 $\{{\cal L}_{A_\alpha}\}$.  By Lemma~\ref{tight} a), ${\cal M}^\Omega_{\cal A}\ne\emptyset$. Let $\mu\in
{\cal M}_{\cal A}^\Omega$. Then there is a sequence
$\{\mu_l:=\mu_{A_\ell}^\Omega\}$ such  that $\mu=\lim_{l\to\infty}
\mu_{l}$, where  $\{A_l=(a^{ij}_l)\}\subset \cal A$ is a sequence
such that $A_l\to 0$ as $l\to\infty$. Denote $U$ as a uniform
Lyapunov function in $ \Omega$ with respect to $\{{\cal
L}_{A_\alpha}\}$, $\rho_m$ as an essential lower bound of $U$,
$\rho_M$ as the essential upper bound of $U$, and $\Omega_\rho$ as
the $\rho$-sublevel set of $U$ for each $\rho>0$. By Lemma
\ref{tight} b), we have ${\rm supp}(\mu)\subseteq
\bar\Omega_{\rho_m}$.

For fixed $\rho\in(\rho_m,\rho_M)$, by Proposition~\ref{UNIF},
$\Omega_{\rho}$ is an open, pre-compact, positively invariant set of
$\varphi^t$ in $\Omega$ containing ${\rm supp}(\mu)$, so it follows
from Theorem \ref{tight0-1} c) and Proposition
\ref{invariant-measure} that $\mu$ is an invariant measure of
$\varphi^t$ supported on the maximal attractor $\cal
J=\omega(\Omega_{\rho_m})$ of $\varphi^t$ in $\Omega_{\rho_m}$,
which is also the maximal attractor of $\varphi^t$ in $\Omega$.

b) We note by the invariance of any $\mu\in {\cal M}_{\cal
A}^\Omega$ that $\cal J^\Omega_{\cal A}$ is a compact invariant set
of $\varphi^t$ in $\Omega$ and by a) that $\cal J^\Omega_{\cal
A}\subset \cal J$. It follows from Proposition \ref{stable}, Lemma
\ref{tight} a) and Corollary \ref{min-A-S} that  both $\cal
J^\Omega_{\cal A}$ and $\cal J$ are relatively $\cal A$-stable sets
in $\Omega$ with $\cal J^\Omega_{\cal A}$ being the smallest
relatively $\cal A$-stable set  in
 $\Omega$. Since the maximal attractor $\cal J$ is the largest
 compact invariant set of $\varphi^t$ in
 $\Omega$, the proof is complete.
\end{proof}

Note that Theorem~\ref{t410} a) immediately implies
Theorem~B b) when $\Omega=\cal U$.


\medskip

\begin{Corollary}\label{LS} Assume  $V\in C^1(\cal U,\R^n)$. If \eqref{ode} admits  a $C^2$ Lyapunov function
  whose  second derivatives are bounded on $\cal U$, then the
global attractor $\cal J$ in $\cal U$ is $\cal A$-stable with
respect to any invariant, bounded null family $\cal
A$.
\end{Corollary}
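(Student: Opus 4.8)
The plan is to derive Corollary~\ref{LS} by assembling three earlier results: Proposition~\ref{UNIF-S}, which manufactures a uniform Lyapunov function from a deterministic one; Theorem~B (equivalently Theorem~\ref{t410}~a) with $\Omega=\cal U$), which forces every $\cal A$-limit measure onto the global attractor; and Theorem~\ref{t410}~b) (or directly Proposition~\ref{stable} together with Corollary~\ref{min-A-S}), which turns such concentration into $\cal A$-stability. Concretely, let $U$ be the given $C^2$ Lyapunov function of \eqref{ode} on $\cal U$ with bounded second derivatives, and let $\cal A=\{A_\alpha\}\subset\tilde{\cal A}$ be an arbitrary invariant, bounded null family. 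The first step is to note that, since $\cal A$ is a bounded null family, we have $|A_\alpha|\to 0$, so after discarding a cofinal-complementary initial segment of the net we may assume $|A_\alpha|\ll 1$; by Proposition~\ref{UNIF-S}, $U$ is then a uniform Lyapunov function in $\cal U$ with respect to $\{{\cal L}_{A_\alpha}\}$. (One should remark that passing to a cofinal sub-net does not change the set ${\cal M}_{\cal A}$ of $\cal A$-limit measures nor the $\cal A$-stability notion, since both are defined via sequences $A_\ell\to 0$.)

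The second step is to invoke Remark~\ref{rk2.3}~1): the existence of a uniform Lyapunov function with respect to $\cal A$ makes $\cal A$ admissible, so stationary measures $\{\mu_\alpha\}$ corresponding to $\{{\cal L}_{A_\alpha}\}$ exist (Proposition~\ref{exist}), and Definition~\ref{2.5} (relative) $\cal A$-stability is meaningful. The third step applies Theorem~\ref{t410} with $\Omega=\cal U$: the uniform Lyapunov function hypothesis is satisfied, $V\in C^1(\cal U,\R^n)$ by assumption, so part a) gives that ${\cal M}_{\cal A}\ne\emptyset$ and every $\mu\in{\cal M}_{\cal A}$ is an invariant measure of $\varphi^t$ supported on $\cal J$, the maximal attractor of $\varphi^t$ in $\cal U$ — which is exactly the global attractor. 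Moreover $\varphi^t$ is dissipative on $\cal U$ by Proposition~\ref{UNIF}, so $\cal J$ is indeed the global attractor in the sense of the Appendix. Then part b) of Theorem~\ref{t410}, using the invariance of $\cal A$, asserts precisely that $\cal J$ (being the largest compact invariant set in $\cal U$) is a relatively $\cal A$-stable set in $\cal U$, i.e., $\cal A$-stable. This completes the argument.

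I expect the only genuine subtlety — hardly an obstacle — to be the bookkeeping around ``$|A_\alpha|\ll 1$'': Proposition~\ref{UNIF-S} only yields a uniform Lyapunov function once the bounded null family has small enough sup-norm, so one must be careful that replacing $\cal A$ by its tail sub-net $\{A_\alpha : |A_\alpha|\le\delta_0\}$ (for a suitable threshold $\delta_0$ coming from the bound on $D^2U$ and $\gamma$) is harmless. This is immediate from the definition of a null family as a directed net converging to $0$: the tail is cofinal, hence still a null family, with the same limit measures and the same stability status. Everything else is a direct citation: there is no new estimate to prove, and the calculation content is entirely inside the already-proved Proposition~\ref{UNIF-S}, Proposition~\ref{unif}, Lemma~\ref{tight}, and Theorem~\ref{t410}. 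Thus the corollary is a one-paragraph consequence once the hypotheses of Theorem~\ref{t410}~(b) are checked.
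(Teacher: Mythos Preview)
Your proposal is correct and follows essentially the same route as the paper: the paper's own proof is the single sentence ``It follows from Proposition~\ref{UNIF-S}, Remark~\ref{rk2.3}~1), and Theorem~\ref{t410} with $\Omega=\cal U$,'' which is precisely the three-step assembly you describe. Your additional remarks on the ``$|A_\alpha|\ll 1$'' tail sub-net are a harmless elaboration that the paper leaves implicit.
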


\begin{proof}
It follows from Proposition~\ref{UNIF-S}, Remark~\ref{rk2.3} 1), and Theorem~\ref{t410} with $\Omega=\cal U$.
\end{proof}


\subsection{Local concentration of limit measures} For a given admissible null family
$\mathcal{A}=\{A_\alpha\}\subset \tilde{\cal A}$, if there exists a
uniform Lyapunov  function with respect to $\{{\cal
L}_{A_\alpha}\}$, then  when $\Omega=\cal U$ Theorem~\ref{t410}
asserts that any limit measure $\mu\in {\cal M}_{\cal A}$ is an
invariant measure of $\varphi^t$ supported on the global attractor
$\cal J$ of $\varphi^t$. In this case, possible local concentration
of $\mu$ is clear because any invariant measure of a flow defined on
a compact metric space is supported on the closure of the recurrent
set of the flow (\cite{Walter}).

In case that a uniform Lyapunov function is not known on $\cal U$,
information may still be obtained about  possible local
concentration of a limit measure  $\mu\in {\cal M}_{\cal A}$  on a
compact invariant set ${\cal E}\subset\cal U$ of $\varphi^t$, i.e.,
there is a neighborhood $\cal W$ of $\cal E$ in $\cal U$ such that
$\mu(\cal W\setminus \cal E)=0$, provided that a uniform Lyapunov or
anti-Lyapunov function is known on $\cal W$.

We first consider   a pre-compact set $\Omega\subset\cal U$ in which
a uniform anti-Lyapunov function with respect to $\{{\cal
L}_{A_\alpha}\}$ exists. We note that  ${\cal M}_{\cal A}^{\bar
\Omega}$ is always non-empty (see Remark~\ref{tt}) even if ${\cal
M}_{\cal A}^\Omega$ is empty. We also note by Proposition~\ref{UNIF}
that, with the existence of a uniform anti-Lyapunov function in
$\Omega$ with respect to $\{{\cal L}_{A_\alpha}\}$, $\varphi^t$ must
be anti-dissipative in $\Omega$. In particular, $\Omega$ contains a
maximal repeller of $\varphi^t$.

\medskip

\begin{Lemma}\label{tight1} Let $\cal A=\{A_\alpha\}\subset
\tilde{\cal A}$ be a null family. Suppose that  $\Omega$ is a
connected,  open,  and pre-compact subset of $\cal U$
 and that there is a uniform anti-Lyapunov function $U$ with
respect to $\{{\cal L}_{A_\alpha}\}$ in $\Omega$.  Then the
following holds.
\begin{itemize}
\item[{\rm a)}]  The family $\{\mu_\alpha^\Omega\}$ of normalized stationary
measures in $\Omega$  corresponding to $\{{\cal L}_{A_\alpha}\}$  is
relatively sequentially compact in $M(\bar{\Omega})$.
\item[{\rm b)}] For any $\mu\in {\cal M}^{\bar\Omega}_{\cal A}$,
${\rm supp} (\mu)\subset  \bar\Omega_{\rho_m}\cup
\partial{\Omega}$, where $\rho_m$ is the essential lower bound of $U$
and $\Omega_{\rho_m}$ is the $\rho_m$-sublevel set of $U$.
\item[{\rm c)}] If $V\in C^1(\cal U,\R^n)$, then for any $\mu\in {\cal
M}^{\bar\Omega}_{\cal A}$, ${\rm supp} (\mu)\subset \cal
R\cup
\partial{\Omega}$, where $\cal R$ is the maximal repeller of
$\varphi^t$ in $\Omega$. Moreover, if $\mu(\cal R)\ne 0$, then
$\frac{\mu\big |_{\cal R}}{\mu(\cal R)}$ is an invariant measure of
$\varphi^t$ on $\cal R$, and consequently, $\mu$ is an invariant
measure of $\varphi^t$ in $\Omega$ if $\mu(\partial\Omega)=0$.
\end{itemize}
\end{Lemma}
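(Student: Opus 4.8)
\textbf{Proof proposal for Lemma~\ref{tight1}.}

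The plan is to mirror the structure of the Lyapunov-based Lemma~\ref{tight} and Theorem~\ref{t410}, replacing the ``escape to the boundary'' estimate by the ``escape from the interior repeller'' estimate of Proposition~\ref{aunif}. First, for part a), since $\Omega$ is pre-compact, $\bar\Omega$ is compact, so by the regularity theorem of \cite{BKR} each stationary measure corresponding to ${\cal L}_{A_\alpha}$ is regular and each normalized measure $\mu_\alpha^\Omega$ extends to a Borel probability measure on $\bar\Omega$; then $\{\mu_\alpha^\Omega\}\subset M(\bar\Omega)$ is automatically tight (any subset of $M$ of a compact space is tight), hence relatively sequentially compact by Prokhorov. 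This is the easy part.

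For part b), fix $\mu\in {\cal M}_{\cal A}^{\bar\Omega}$ with $\mu=\lim_\ell \mu_\ell^\Omega$ along some $A_\ell\to 0$. Let $U$ be the uniform anti-Lyapunov function with essential lower bound $\rho_m$, upper bound $\rho_M$, and uniform anti-Lyapunov constant $\gamma$. Since $\bar\Omega$ is compact and $U\in C^2$, conditions \eqref{t1},\eqref{t2} hold with $H(\rho)=\sup_\alpha\, a^{ij}_\alpha\partial_i U\,\partial_j U$ on the level sets, which tends to $0$ uniformly as $A_\alpha\to 0$ by Remark~\ref{null}; in fact one can bound $H(t)\le C|A_\ell|_{C(\bar\Omega)}$ for a constant $C$ depending only on $U$ and $\Omega$. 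Proposition~\ref{aunif} then gives, for $\rho_m<\rho_0<\rho<\rho_M$,
\[
\mu_\ell^\Omega(\Omega_{\rho_0}\setminus \Omega^*_{\rho_m})\le \mu_\ell^\Omega(\Omega_\rho\setminus\Omega^*_{\rho_m})\,\rme^{-\gamma\int_{\rho_0}^\rho \frac{1}{H(t)}\,\rmd t}.
\]
As $A_\ell\to 0$, $H(t)\to 0$, so the exponential factor tends to $0$; since $\mu_\ell^\Omega(\Omega_\rho\setminus\Omega^*_{\rho_m})\le 1$, we get $\mu_\ell^\Omega(\Omega_{\rho_0}\setminus\Omega^*_{\rho_m})\to 0$. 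Because $\Omega_{\rho_0}\setminus\bar\Omega_{\rho_m}$ is a relatively open subset of $\bar\Omega$ (up to the boundary level set, which is handled by shrinking $\rho_0\downarrow\rho_m$), Proposition~\ref{wt-kh} 3) yields $\mu(\Omega_{\rho_0}\setminus\bar\Omega_{\rho_m})=0$ for every $\rho_0>\rho_m$. Letting $\rho_0\uparrow\rho_M$ and using continuity of measures gives $\mu\big((\Omega\setminus\partial\Omega)\setminus\bar\Omega_{\rho_m}\big)=0$, i.e.\ ${\rm supp}(\mu)\subset\bar\Omega_{\rho_m}\cup\partial\Omega$. One subtlety to watch: the sets $\Omega_{\rho_0}$ are open \emph{in} $\Omega$, and near $\partial\Omega$ the sublevel sets fill up $\Omega$, so the estimate only controls mass away from $\partial\Omega$; this is exactly why $\partial\Omega$ appears in the conclusion.

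For part c), assume $V\in C^1$. By Proposition~\ref{UNIF}, $U$ being a uniform anti-Lyapunov function makes it an anti-Lyapunov function of \eqref{ode} in $\Omega$, so $\varphi^t$ is anti-dissipative there and $\bar\Omega_{\rho_m}$ contains (and by the appendix results, retracts to) the maximal repeller ${\cal R}=\alpha(\Omega\setminus\bar\Omega_{\rho_m})$ of $\varphi^t$ in $\Omega$. To upgrade ``${\rm supp}(\mu)\subset\bar\Omega_{\rho_m}\cup\partial\Omega$'' to ``${\rm supp}(\mu)\subset{\cal R}\cup\partial\Omega$,'' apply Theorem~\ref{tight0-1} a): $\mu\in{\cal M}_{\cal A}^{\bar\Omega}$ satisfies $\int_\Omega V\cdot\nabla h\,\rmd\mu=0$ for $h\in C_0^1(\Omega)$ — this needs a small argument since $\mu$ is a limit of normalized measures in $\Omega$ (rather than in $\bar\Omega$), but the weak identity \eqref{limitM-11} passes to the limit on any $h$ supported in the open set $\Omega$, because $a^{ij}_\ell\to 0$ uniformly on ${\rm supp}(h)$ and $\mu_\ell^\Omega\to\mu$ weakly$^*$ on $\bar\Omega$. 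Now the portion of $\mu$ sitting on $\bar\Omega_{\rho_m}$: restrict attention to a fixed sublevel set $\Omega_\rho$ with $\rho_m<\rho<\rho_M$, which is an open, pre-compact, negatively invariant set of $\varphi^t$. If $\mu({\cal R})\ne 0$, the conditional measure $\mu|_{\cal R}/\mu({\cal R})$ — after noting $\mu$ gives no mass to $(\Omega\setminus\partial\Omega)\setminus{\cal R}$ once we intersect with $\Omega_\rho$ — is, by Proposition~\ref{simple-1-1} applied on $\Omega_\rho$ (via an extension of test functions, exactly as in the proof of Theorem~\ref{tight0-1} c)), a negatively invariant measure of $\varphi^t$ on $\Omega_\rho$, hence by Proposition~\ref{invariant-measure} an invariant measure supported on ${\cal R}=\alpha(\Omega_\rho)$. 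Finally, if in addition $\mu(\partial\Omega)=0$, then $\mu$ is entirely supported on ${\cal R}$ and equals $\mu|_{\cal R}$, so $\mu$ itself is invariant. The main obstacle is the bookkeeping around $\partial\Omega$: one must consistently separate the ``interior'' mass (controlled by the anti-Lyapunov estimate and the invariance identity) from the ``boundary'' mass (over which we have no control), and invoke the invariance characterizations only on pre-compact invariant sublevel sets strictly inside $\Omega$, taking care that every test-function extension stays in $C_0^1(\Omega)$.
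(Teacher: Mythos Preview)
Your overall strategy matches the paper's: tightness on the compact $\bar\Omega$ for a), Proposition~\ref{aunif} plus Proposition~\ref{wt-kh} for b), and a negatively-invariant-set argument for c). However, as written there are two genuine gaps.

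\medskip
\textbf{Part b): the set $\Omega^*_{\rho_m}$ versus $\bar\Omega_{\rho_m}$.} Proposition~\ref{aunif} gives an estimate on $\Omega_{\rho_0}\setminus\Omega^*_{\rho_m}$ with $\Omega^*_{\rho_m}=\{U\le\rho_m\}$, whereas the conclusion of b) involves $\bar\Omega_{\rho_m}$, the closure of $\{U<\rho_m\}$. In general $\bar\Omega_{\rho_m}\subsetneq\Omega^*_{\rho_m}$, and ``shrinking $\rho_0\downarrow\rho_m$'' does not bridge this gap. The paper handles this by first letting $A_l\to 0$ in ${\cal L}_{A_l}U\ge\gamma$ to obtain $V\cdot\nabla U\ge\gamma$ on $\{U\ge\rho_m\}$, so $\nabla U\ne 0$ on each level set $U^{-1}(\rho)$ for $\rho\in[\rho_m,\rho_M)$; the Implicit Function Theorem then gives $\partial\Omega_\rho=U^{-1}(\rho)$, hence $\Omega^*_{\rho_m}=\bar\Omega_{\rho_m}$. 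You need this step (or an equivalent argument) before invoking Proposition~\ref{wt-kh}.

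\medskip
\textbf{Part c): circularity in the conditioning.} You write ``the conditional measure $\mu|_{\cal R}/\mu({\cal R})$ --- after noting $\mu$ gives no mass to $(\Omega\setminus\partial\Omega)\setminus{\cal R}$ \ldots'' --- but that vanishing is precisely what has to be proved; at this stage you only know ${\rm supp}(\mu)\subset\bar\Omega_{\rho_m}\cup\partial\Omega$. The correct object to normalize is $\mu|_{\Omega}$ (or $\mu|_{\Omega_{\rho_0}}$ for some $\rho_0\in(\rho_m,\rho_M)$), whose support lies in $\bar\Omega_{\rho_m}$ by b). Then your weak identity $\int_\Omega V\cdot\nabla h\,\rmd\mu=0$ for $h\in C_0^1(\Omega)$, together with the extension argument into a pre-compact negatively invariant sublevel set $\Omega_{\rho_*}\supset\bar\Omega_{\rho_m}$ and Propositions~\ref{simple-1-1}, \ref{invariant-measure}, yields that this normalized measure is invariant and supported on $\alpha(\Omega_{\rho_*})={\cal R}$; only then can you identify it with $\mu|_{\cal R}/\mu({\cal R})$. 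The paper reaches the same conclusion by a slightly different route: it renormalizes the \emph{sequence} $\mu_l$ on $\Omega_{\rho_0}$, uses $\mu(\partial\Omega_{\rho_0})=0$ (from b)) to pass to the limit, and then applies Theorem~\ref{tight0-1} c) directly on $\Omega_{\rho_0}$.
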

\begin{proof} a) holds because $\{\mu_\alpha^\Omega\}$, as a family of Borel
probability measures on the compact set $\bar\Omega$, is tight.

To prove b), we let $\{A_l=(a^{ij}_l)\}\subset \cal A$ and
$\{\mu_l:=\mu_l^\Omega\}$ be  sequences such that $A_l\to 0$ and $
\mu_l\to \mu\in M(\bar {\Omega})$ as $l\to\infty$. Denote $ \rho_M$
as the essential upper bound of $U$, $\gamma$
 as the uniform anti-Lyapunov constant of $U$, and
$\Omega_\rho$ as the  $\rho$-sublevel set of $U$ in $\Omega$ for
each $\rho\in [\rho_m, \rho_M)$.

For each $\rho\in [\rho_m,\rho_M)$, we  have
$$ a^{ij}_l(x)
\partial_i U(x) \partial_j U(x)\le |A_l|_{C(\bar{\Omega})}|\nabla
U|^2_{C(\bar\Omega_{\rho})},\qquad x\in \partial \Omega_\rho.
$$
Thus, for a given $\rho_0\in (\rho_m,\rho_M)$, it follows from
Proposition \ref{aunif} that
\begin{equation}\label{anti-ineq}
\mu_{l}(\Omega_{\rho_0}\setminus \Omega_{\rho_m}^*)\le
\mu_{l}(\Omega_{\rho}\setminus
\Omega_{\rho_m}^*)\rme^{-\frac{\gamma C_\rho}{a_l}}\le
\rme^{-\frac{\gamma C_\rho}{a_l}},
\end{equation}
where $\Omega_{\rho_m}^*=\Omega_{\rho_m}\cup U^{-1}(\rho_m)$,
$a_l=|A_l|_{C(\bar{\Omega})}$, and
$C_\rho=\frac{\rho-\rho_0}{|\nabla U|^2_{C(\bar\Omega_{\rho})}}$,
$\rho\in (\rho_0,\rho_M)$.

 Since $A_l\to 0$ and ${\cal L}_{A_l} U(x)\geq \gamma$ for all $l$ and
$x\in \tilde{\Omega}=\Omega\setminus \bar\Omega_{\rho_m}$, taking
limit $l\to\infty$ yields that $V(x)\cdot\nabla U(x)\ge \gamma$ in
$\tilde \Omega$. Hence $V(x)\cdot\nabla U(x)\ge \gamma$ on the the
closure of $\tilde\Omega$ in $\Omega$, i.e., on the set
$\{x\in\Omega: U(x)\ge \rho_m\}$.  For any $\rho\in[\rho_m,\rho_M)$
and $x\in\partial \Omega_{\rho}$,  the Implicit Function Theorem
implies that in the vicinity of $x$, $\partial \Omega_\rho$ is a
local $C^2$ hypersurface which coincides with a portion of the level
surface $U^{-1}(\rho)$. Since $\partial \Omega_\rho$ is a compact
set, it consists of finitely many $C^2$ components, each of which is
oriented. Thus, $\Omega_\rho$ is a $C^2$ domain and
\begin{equation}\label{nonde}
\partial\Omega_\rho = U^{-1}(\rho), \qquad \hbox{for any } \rho\in[\rho_m,\rho_M).
\end{equation}
In particular, $\Omega_{\rho_m}^*=\bar\Omega_{\rho_m}$. Since
$a_l\to 0$, an application of Proposition~\ref{wt-kh} to
\eqref{anti-ineq} yields that $\mu(\Omega_{\rho_0}\setminus
\bar\Omega_{\rho_m})=0$ for any $\rho_0\in (\rho_m,\rho_M)$. It
follows that $\mu(\Omega\setminus \bar\Omega_{\rho_m})=0$. Hence
${\rm supp}(\mu)\subset \bar\Omega_{\rho_m}\cup
\partial{\Omega}$.

To prove c), we observe that if $\mu(\bar\Omega_{\rho_m})=0$, then
by b),  ${\rm supp}(\mu)\subset
\partial{\Omega}$. Now suppose  $\mu(\bar\Omega_{\rho_m})\ne 0$ and let $\rho_*,\rho_0\in (\rho_m,\rho_M)$
be fixed such that $\rho_*<\rho_0$. We have by b) that
$\mu(\partial\Omega_{\rho_0})=0$. It follows from Proposition
\ref{wt-kh} that
$\lim_{l\to\infty}\mu_l(\Omega_{\rho_0})=\mu(\Omega_{\rho_0})$, and
therefore
$$\tilde\mu_l=:\frac
{\mu_l|_{\Omega_{\rho_0}}}{\mu_l(\Omega_{\rho_0})}
$$
is a sequence of stationary measures corresponding to $\{{\cal
L}_{A_l}\}$ in $\Omega_{\rho_0}$ which converges to
$$\tilde\mu=:\frac
{\mu|_{\Omega_{\rho_0}}}{\mu(\Omega_{\rho_0})}
$$
as $l\to\infty$.  We clearly have ${\rm supp}(\tilde\mu)\subset
\Omega_{\rho_*}$ because $\rho_m < \rho_*$. Since $V(x)\cdot\nabla
U(x)\ge \gamma$ in $\{x\in\Omega: U(x)\ge \rho_m\}$,
$\Omega_{\rho_*}$ is a negatively invariant, pre-compact open subset
of $\Omega_{\rho_0}$. It follows from  Theorem \ref{tight0-1} c)
with $\Omega$ in place of $\cal U$ and $\Omega_{\rho_0}$ in place of
$\Omega$ that $\tilde\mu$ is an invariant measure of $\varphi^t$ on
$\Omega_{\rho_0}$. Moreover, by Proposition~\ref{invariant-measure},
$\tilde\mu$ is supported on $\alpha(\Omega_{\rho_0})=\cal R$ - the
maximal repeller of $\varphi^t$ in $\Omega$. Therefore, ${\rm supp}
(\tilde\mu) \subset \cal R$. By b), ${\rm supp} (\mu) \subset \cal
R\cup
\partial{\Omega}$. Since $\frac{\mu\big |_{\cal R}}{\mu(\cal R)}=\tilde\mu\big |_{\cal
R}$, the proof is complete.
\end{proof}
\medskip

\begin{Theorem}\label{anti-L}  Assume $V\in C^1(\cal U,\R^n)$. Let $\cal A=\{A_\alpha\}\subset
\tilde{\cal A}$ be an admissible null family, $\mu\in {\cal M}_{\cal
A}$, and $\Omega\subset\cal U$ be a connected open set. Then the
following holds.
\begin{itemize}
\item[{\rm a)}] If there exists a uniform Lyapunov function in $\Omega$  with respect
to $\{{\cal L}_{A_\alpha}\}$, then $\mu(\Omega\setminus\cal J)=0$,
where $\cal J$ is the maximal attractor of $\varphi^t$ in $\Omega$.
\item[{\rm b)}] If $\Omega$ is  pre-compact in $\cal U$ and there exists a uniform anti-Lyapunov
function in $\Omega$  with respect to $\{{\cal L}_{A_\alpha}\}$,
then $\mu(\Omega\setminus \cal R)=0$,  where $\cal R$ is the maximal
repeller of $\varphi^t$ in $\Omega$.
\end{itemize}
\end{Theorem}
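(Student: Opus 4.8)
The plan is to localize the global $\cal A$-limit measure $\mu$ to a pre-compact sublevel set of the given (anti-)Lyapunov function $U$, where Theorem~\ref{t410} (resp.\ Lemma~\ref{tight1}) applies directly. Fix $\mu\in{\cal M}_{\cal A}$, write $\mu=\lim_\ell\mu_\ell$ in $M(\cal U)$ with $\{A_\ell\}\subset\cal A$, $A_\ell\to0$, and $\mu_\ell$ the stationary measure corresponding to ${\cal L}_{A_\ell}$, and let $U$ have essential bounds $\rho_m<\rho_M$, uniform constant $\gamma$, and $\rho$-sublevel sets $\Omega_\rho$.

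\emph{Step 1 (concentration of $\mu$ near $\bar\Omega_{\rho_m}$).} Apply the measure estimates of Section~2 to each $\mu_\ell$ and let $\ell\to\infty$. In case a), Proposition~\ref{unif} a) (together with the trivial bound $\mu_\ell(\Omega_{\rho_0}\setminus\Omega_{\rho_m})\le1$) gives, for fixed $\rho_0\in(\rho_m,\rho_M)$,
\[
\mu_\ell(\cal U\setminus\Omega_{\rho_0})\le\gamma^{-1}C_{\rho_m,\rho_0}\,|A_\ell|_{C(\bar\Omega_{\rho_0})}\,|\nabla U|^2_{C(\bar\Omega_{\rho_0})}\longrightarrow0,
\]
since $\bar\Omega_{\rho_0}$ is compact in $\cal U$ and $A_\ell\to0$ uniformly on it (Remark~\ref{null}); Proposition~\ref{wt-kh} applied to the open set $\cal U\setminus\bar\Omega_{\rho_0}$ then forces $\mu(\cal U\setminus\bar\Omega_{\rho_0})=0$ for every such $\rho_0$. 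In case b), one first lets $\ell\to\infty$ in ${\cal L}_{A_\ell}U\ge\gamma$ to obtain $V\cdot\nabla U\ge\gamma$ on $\Omega\setminus\bar\Omega_{\rho_m}$, whence (as in the proof of Lemma~\ref{tight1}) $\nabla U\neq0$ on $U^{-1}(\rho)$, $\partial\Omega_\rho=U^{-1}(\rho)$, $\Omega^*_{\rho_m}=\bar\Omega_{\rho_m}$ for $\rho\in[\rho_m,\rho_M)$; then Proposition~\ref{aunif} applied to $\mu_\ell$ with $H_\ell(\rho)=|A_\ell|_{C(\bar\Omega)}|\nabla U|^2_{C(\bar\Omega_\rho)}$ (here $\bar\Omega$ is compact in $\cal U$, so $|A_\ell|_{C(\bar\Omega)}\to0$) gives $\mu_\ell(\Omega_{\rho_0}\setminus\bar\Omega_{\rho_m})\le\rme^{-\gamma\int_{\rho_0}^{\rho}H_\ell(t)^{-1}\rmd t}\to0$ for fixed $\rho_m<\rho_0<\rho<\rho_M$, and letting $\ell\to\infty$ yields $\mu(\Omega\setminus\bar\Omega_{\rho_m})=0$. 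In both cases, $\mu(\partial\Omega_{\rho_0})=0$ for every $\rho_0\in(\rho_m,\rho_M)$; moreover $\mu(\Omega)=\mu(\Omega_{\rho_0})=1$ in case a), while $\mu(\Omega_{\rho_0})=\mu(\bar\Omega_{\rho_m})$ in case b).

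\emph{Step 2 (localization).} Fix $\rho_0\in(\rho_m,\rho_M)$; in case b) assume $\mu(\bar\Omega_{\rho_m})>0$ (otherwise $\mu(\Omega)=0$ by Step~1 and there is nothing to prove). Then $\Omega_{\rho_0}$ is open and pre-compact in $\Omega$, $U|_{\Omega_{\rho_0}}$ is again a uniform Lyapunov (resp.\ anti-Lyapunov) function in $\Omega_{\rho_0}$ with respect to $\{{\cal L}_{A_\alpha}\}$ with the same $\rho_m,\gamma$, and by the (anti-)dissipative structure from Proposition~\ref{UNIF} the maximal attractor (resp.\ repeller) of $\varphi^t$ in $\Omega_{\rho_0}$ equals the maximal attractor $\cal J$ (resp.\ repeller $\cal R$) of $\varphi^t$ in $\Omega$. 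Since any $f\in C_0^\infty(\Omega_{\rho_0})$ extends by zero to $\cal U$, the normalized restriction $\mu_\ell^{\Omega_{\rho_0}}:=\mu_\ell|_{\Omega_{\rho_0}}/\mu_\ell(\Omega_{\rho_0})$ is a normalized stationary measure corresponding to ${\cal L}_{A_\ell}$ in $\Omega_{\rho_0}$, and $\mu(\partial\Omega_{\rho_0})=0$ together with Proposition~\ref{wt-kh} 4) gives $\mu_\ell(\Omega_{\rho_0})\to\mu(\Omega_{\rho_0})$ and $\mu_\ell^{\Omega_{\rho_0}}\to\tilde\mu:=\mu|_{\Omega_{\rho_0}}/\mu(\Omega_{\rho_0})$ in $M(\bar\Omega_{\rho_0})$, hence also in $M(\Omega_{\rho_0})$ by the tightness coming from Step~1. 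In case a), $\tilde\mu\in{\cal M}_{\cal A}^{\Omega_{\rho_0}}$, so Theorem~\ref{t410} a) shows $\tilde\mu$ is supported on $\cal J$; since $\mu(\Omega)=\mu(\Omega_{\rho_0})=1$ this is exactly $\mu(\Omega\setminus\cal J)=0$. In case b), $\tilde\mu\in{\cal M}_{\cal A}^{\bar\Omega_{\rho_0}}$, so Lemma~\ref{tight1} c) shows ${\rm supp}(\tilde\mu)\subset\cal R\cup\partial\Omega_{\rho_0}$; since $\mu(\partial\Omega_{\rho_0})=0$ this upgrades to $\mu(\Omega_{\rho_0}\setminus\cal R)=0$, and combined with $\mu(\Omega\setminus\bar\Omega_{\rho_m})=0$ and $\bar\Omega_{\rho_m}\subset\Omega_{\rho_0}$ this gives $\mu(\Omega\setminus\cal R)=0$.

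The real difficulty is Step~2 rather than the essentially bookkeeping Step~1: one has to check that the weak$^*$-limit $\mu$, which a priori only lives on $\cal U$, descends to a bona fide normalized $\cal A$-limit measure on the pre-compact set $\bar\Omega_{\rho_0}$. This uses the ``no-boundary-mass'' identity $\mu(\partial\Omega_{\rho_0})=0$ — exactly what the quantitative estimates of Step~1 produce — to push the weak$^*$-limit through the normalization, and the identification of the maximal attractor (resp.\ repeller) inside $\Omega_{\rho_0}$ with the one in $\Omega$, a routine consequence of the dissipative (resp.\ anti-dissipative) structure forced by the uniform (anti-)Lyapunov function via Proposition~\ref{UNIF}. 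Everything else is standard: the Section~2 estimates control the mass escaping the sublevel sets of $U$, and Proposition~\ref{wt-kh} converts those bounds into support statements for $\mu$.
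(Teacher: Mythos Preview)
Your proof is correct and follows essentially the same approach as the paper's own proof: first use the measure estimates (Propositions~\ref{unif} and \ref{aunif}) to show that $\mu$ puts no mass on the annulus $\Omega\setminus\bar\Omega_{\rho_m}$ and in particular none on $\partial\Omega_{\rho_0}$, then normalize to a pre-compact sublevel set $\Omega_{\rho_0}$ and invoke Theorem~\ref{t410} (resp.\ Lemma~\ref{tight1}~c)). The only cosmetic difference is that the paper first normalizes $\mu_\ell$ to $\Omega$ before applying the estimates and then lets $\rho_*\to\rho_M$ at the end, whereas you apply Proposition~\ref{unif}~a) directly on $\cal U$ (which in case~a) incidentally yields the slightly stronger conclusion $\mu(\Omega)=1$) and handle the remainder of $\Omega$ via the Step~1 concentration; both variants are equivalent.
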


\begin{proof} Let $\{A_l=(a^{ij}_l)\}\subset \cal A$
 be a sequence such that $A_l\to
0$ and $\{\mu_l\}$ be a sequence of stationary measures
 corresponding to $\{{\cal L}_{A_l}\}$ such that $ \mu_l\to
\mu\in M(\cal U)$ as $l\to\infty$.   Also let $U$ be a uniform
Lyapunov function in $\Omega$ in the case a) and a uniform
anti-Lyapunov function in $\Omega$ in the case b), with essential
lower bound $\rho_m$ and upper bound $\rho_M$. For each $\rho\in
[\rho_m,\rho_M)$, denote $\Omega_\rho$ as the $\rho$-sublevel set of
$U$.

If $\mu(\Omega)=0$, then the theorem holds automatically. We now
suppose that  $\mu(\Omega)\ne 0$. For any $\rho_0\in
(\rho_m,\rho_M)$, applications of \eqref{Est1}, \eqref{anti-ineq},
\eqref{nonde} to
\[
\tilde\mu_l=:\displaystyle \frac
{\mu_{l}|_{\Omega}}{\mu_{l}(\Omega)}
\]
in place of $\mu_l$ for the case a), b) respectively yield that
\begin{eqnarray}
&& \mu(\Omega\setminus\bar\Omega_{\rho_0})=0,\;\hbox{in the case
a)},\label{case1}\\
&&\mu(\Omega_{\rho_0}\setminus\bar\Omega_{\rho_m})}=0,\;\hbox{in the
case b),\label{case2}
\end{eqnarray}
for any $\rho_0\in (\rho_m,\rho_M)$. Now let $\rho_*\in
(\rho_m,\rho_M)$ be fixed. Then  \eqref{case1} and \eqref{case2}
imply that
\[
 \mu(\partial \Omega_{\rho_*})=0.
\]
Hence it follows from Proposition \ref{wt-kh} that
\[
\mu_l^*=:\displaystyle \frac
{\mu_{l}|_{\Omega_{\rho_*}}}{\mu_{l}(\Omega_{\rho_*})}
\]
is a sequence of stationary measures corresponding to $\{{\cal
L}_{A_l}\}$ in $\Omega_{\rho_*}$ which converges to
\[
\mu^*=:\displaystyle \frac
{\mu|_{\Omega_{\rho_*}}}{\mu(\Omega_{\rho_*})}.
\]
We note that  $U$ is also a uniform Lyapunov function  in
$\Omega_{\rho_*}$ in the case a) and a uniform anti-Lyapunov
function in $\Omega_{\rho_*}$ in the case b), with respect to
$\{{\cal L}_{A_\alpha}\}$.

In the case a), it follows from Theorem~\ref{t410} with $\Omega$ in
place of $\cal U$ and $\Omega_{\rho_*}$ in place of $\Omega$ that
$\mu^*(\Omega_{\rho_*}\setminus \cal J)=0$, implying that
$\mu(\Omega_{\rho_*}\setminus \cal J)=0$. In the case b), since
$\mu^*(\partial \Omega_{\rho_*})=0$, we have by Lemma~\ref{tight1}
c)  with $\Omega$ in place of $\cal U$ and $\Omega_{\rho_*}$ in
place of $\Omega$ that $\mu^*(\Omega_{\rho_*}\setminus \cal R)=0$,
implying that $\mu(\Omega_{\rho_*}\setminus \cal R)=0$. This
completes the proof because $\rho_*\in  (\rho_m,\rho_M)$ is
arbitrary.
\end{proof}
\medskip

 Part a) of the following result is precisely part c) of
Theorem~A.
\medskip

\begin{Corollary}~\label{LS2}  Assume $V\in C^1(\cal U,\R^n)$ and
$\varphi^t$ admits a strong local attractor $\cal J_0$ {\rm (}resp.
strong local repeller $\cal R_0${\rm )} with an isolating
neighborhood ${\cal W}_{{\cal J}_0}$ {\rm (}resp. ${\cal W}_{\cal
R_0}${\rm )}. Then the following holds for any admissible null
family  $\cal A=\{A_\alpha\}\subset \tilde{\cal A}$.
\begin{itemize}
\item[{\rm a)}] For any $\mu\in {\cal M}_{\cal A}$,    $\mu({\cal
W}_{\cal J_0}\setminus \cal J_0)=0$ {\rm (}resp. $\mu({\cal W}_{\cal
R_0}\setminus \cal R_0)=0${\rm )}.
\item[{\rm b)}] $\cal J_0$ is relatively $\cal A$-stable in ${\cal
W}_{\cal J_0}$ if $\cal A$ is invariant.
\end{itemize}
\end{Corollary}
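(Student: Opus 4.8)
The plan is to deduce Corollary~\ref{LS2} directly from Theorem~\ref{anti-L} together with the appropriate dynamical characterizations of strong local attractors/repellers reviewed in the Appendix. First I would recall (from Section~6) that a strong local attractor $\cal J_0$ with isolating neighborhood $\cal W_{\cal J_0}$ admits, on a possibly smaller neighborhood, a $C^2$ Lyapunov function $U$ for $\varphi^t$ with $U\equiv 0$ exactly on $\cal J_0$ and $V\cdot\nabla U<0$ off $\cal J_0$, with bounded second derivatives on that neighborhood; dually, a strong local repeller $\cal R_0$ admits such an anti-Lyapunov function. The point is that such a Lyapunov (resp. anti-Lyapunov) function for the deterministic flow, having bounded second derivatives on a pre-compact neighborhood $\Omega$ of $\cal J_0$ (resp. $\cal R_0$) contained in the isolating neighborhood, becomes—by Proposition~\ref{UNIF-S}—a \emph{uniform} Lyapunov (resp. anti-Lyapunov) function in $\Omega$ with respect to $\{\cal L_{A_\alpha}\}$, provided we first pass to a tail of the null family where $|A_\alpha|\ll 1$; since limit measures only depend on such tails, this restriction is harmless.

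Next, for part a), I would apply Theorem~\ref{anti-L}a) in the attractor case: taking $\Omega$ as above, any $\mu\in\cal M_{\cal A}$ satisfies $\mu(\Omega\setminus\cal J)=0$ where $\cal J$ is the maximal attractor of $\varphi^t$ in $\Omega$. Because $\cal W_{\cal J_0}$ is an isolating neighborhood of $\cal J_0$ and $\Omega$ is chosen inside it containing $\cal J_0$, the maximal attractor in $\Omega$ is exactly $\cal J_0$; hence $\mu(\Omega\setminus\cal J_0)=0$, and in particular $\mu(\cal W_0\setminus\cal J_0)=0$ for $\cal W_0:=\Omega$. The repeller case is symmetric, using Theorem~\ref{anti-L}b) (which requires $\Omega$ pre-compact, already arranged), giving $\mu(\Omega\setminus\cal R_0)=0$ since the maximal repeller in $\Omega$ is $\cal R_0$ by the isolating property. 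This proves a) with $\cal W_0$ the chosen neighborhood.

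For part b), assuming $\cal A$ invariant, I would invoke Theorem~\ref{t410}b) (or equivalently Proposition~\ref{stable} combined with Lemma~\ref{tight}) applied with $\Omega=\cal W_{\cal J_0}$ (or the slightly smaller $\Omega$ where the uniform Lyapunov function lives): there the family of normalized stationary measures is $\cal A$-sequentially null compact, and the maximal attractor in $\Omega$, namely $\cal J_0$, together with $\cal J^\Omega_{\cal A}$, are the largest and smallest relatively $\cal A$-stable sets in $\Omega$. Since $\cal J^\Omega_{\cal A}\subset\cal J_0$ by part a) and $\cal J_0$ is itself the maximal compact invariant set there, $\cal J_0$ is relatively $\cal A$-stable in $\cal W_{\cal J_0}$. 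The main obstacle I anticipate is purely bookkeeping: making sure the neighborhood on which Proposition~\ref{UNIF-S} produces a uniform Lyapunov function is pre-compact, sits inside the isolating neighborhood, and still has $\cal J_0$ as its maximal attractor—so that Theorem~\ref{anti-L} and Theorem~\ref{t410} apply cleanly—and then transferring the conclusion on $\Omega$ back to the originally given $\cal W_{\cal J_0}$; one must also keep track that the Lyapunov constant and essential lower bound can be taken uniform over the tail of $\cal A$, which is exactly what Proposition~\ref{UNIF-S} guarantees once $|A_\alpha|\ll1$.
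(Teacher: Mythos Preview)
Your proposal is correct and follows essentially the same route as the paper: obtain a deterministic $C^2$ Lyapunov (resp.\ anti-Lyapunov) function from the strong local attractor/repeller characterization (Proposition~\ref{attractor-repeller}), upgrade it via Proposition~\ref{UNIF-S} to a uniform one for the tail of $\cal A$, and then invoke Theorem~\ref{anti-L} for part~a) and Theorem~\ref{t410} for part~b). The only difference worth noting is that the paper works directly on the given isolating neighborhood ${\cal W}_{\cal J_0}$ itself---the proof of Proposition~\ref{attractor-repeller} produces the Lyapunov function on all of $\cal W$, and since $\cal W$ is pre-compact in $\cal U$ the second derivatives are automatically bounded there---so the ``bookkeeping obstacle'' you anticipate (shrinking to a smaller $\Omega$ and then transferring back) simply does not arise.
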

\begin{proof}  We note by the proof of
Proposition~\ref{attractor-repeller} that \eqref{ode} admits a $C^2$
Lyapunov (resp. anti-Lyapunov) function $U$ in $\cal W=:{\cal
W}_{\cal J_0},{\cal W}_{\cal R_0}$.  Since $\cal W$ is a pre-compact
subset of $\cal U$, all second derivatives of $U$ are bounded in
$\cal W$. Let $\cal A=\{A_\alpha\}\subset \tilde{\cal A}$ be any
null family. Without loss of generality, we assume $d(A_\alpha,0)\ll
1$. It follows from Proposition~\ref{UNIF-S}  that $U$
 is a uniform Lyapunov (resp. anti-Lyapunov) function in $\cal W$  with respect to
 $\{{\cal L}_{A_\alpha}\}$.  By Theorem~\ref{anti-L},
$\mu(\cal W\setminus \cal J_0)=0$ {\rm (}resp. $\mu(\cal W\setminus
\cal R_0)=0${\rm )}. This proves a).

Using the uniform Lyapunov function $U$ in ${\cal W}_{\cal J_0}$
with respect to
 $\{{\cal L}_{A_\alpha}\}$ and the fact that $\cal J_0$ is the maximal attractor of $\varphi^t$ in ${\cal W}_{\cal J_0}$, b) follows from
Theorem~\ref{t410} with $\Omega={\cal W}_{\cal J_0}$.
\end{proof}

\section{Stabilization and  de-stabilization via multiplicative noises}

Throughout the section, we assume  $V\in C^1(\cal U,\R^n)$ and that
\eqref{ode} admits a $C^2$ Lyapunov function with bounded second
derivatives on $\cal U$. For any bounded null family $\cal
A=\{A_\alpha\}\subset \tilde {\cal A}$, it follows from
Proposition~\ref{UNIF-S}, Remark~\ref{rk2.3} 1), and
Theorem~\ref{t410} that $\cal A$ is admissible, $\cal
M_{\cal A}\neq\emptyset$,  and any limit measure $\mu\in {\cal
M}_{\cal A}$ is an invariant measure of $\varphi^t$ supported on the
global attractor $\cal J$ of $\varphi^t$, i.e., $\cal
J$ is $\cal A$-stable if $\cal A$ is also invariant. Moreover, by
Corollary~\ref{LS2}, if $\cal J$ contains a strong local attractor
$\cal J_0$ (resp. repeller $\cal R_0$), then $\mu$ may be locally
concentrated on $\cal J_0$ (resp. $\cal R_0$). In fact, we believe
that local concentration on a strong local repeller can be expected
only when it further contains a sub-local attractor, as suggested by
Theorem~\ref{equili} below.

In this section, we will demonstrate  an important role played by
multiplicative noise perturbations. We will show that if $\cal J$
contains a strong local attractor  $\cal J_0$ of $\varphi^t$, then
one can design a particular  invariant,  bounded null family $\cal
A=:\{A_\alpha\}\subset \tilde {\cal A}$ by adding stronger noise
away from the local attractor such that any $\mu\in {\cal M}_{\cal
A}$ is (globally) concentrated on $\cal J_0$, i.e., $\cal J_0$ is
$\cal A$-stable. On the contrary, if $\cal J$ contains a strong
local repeller $\cal R_0$ of $\varphi^t$, then one can design a
particular  invariant,  bounded null family $\cal
A=:\{A_\alpha\}\subset \tilde{\cal A}$ by adding stronger noise on
and near the repeller such that any $\mu\in {\cal M}_{\cal A}$ is
(globally) concentrated away from $\cal R_0$,  i.e., $\cal R_0$ is
strongly $\cal A$-unstable.

In the case that $\cal R_0$ is a strongly repelling equilibrium, we
will show that it is strongly $\cal A$-unstable with respect to any
invariant,  normal null family $\cal A\subset\tilde{\cal A}$.
\medskip

\subsection{Stabilizing a local attractor and de-stabilizing a local repeller} Let ${\cal E}_0$ be
either a strong local attractor or a strong local repeller of
$\varphi^t$ in $\cal U$. We denote the isolating  neighborhood of
${\cal E}_0$  by ${\cal W}_0$. Then by the proof of
Proposition~\ref{attractor-repeller} there is a positive function
$U_0\in C^2(\cal U)$ and constants $\gamma_0,\tilde{\rho}>0$ such
that $\partial{\cal W}_0=\{U_0(x)=\tilde{\rho}\}$ and
\begin{equation}\label{t4}
|V(x)\cdot \nabla U_0(x)|>\gamma_0 |\nabla U_0(x)|,\qquad\qquad x\in
\partial{\cal W}_0=\partial{\Omega^0_{\tilde\rho}},
\end{equation}
where $\Omega^0_\rho$ denotes the  $\rho$-sublevel set of $U_0$ for
each $\rho\ge 0$. By modifying $U_0$ away from  $\partial{\cal
W}_0$, we can assume without loss of generality that $U_0(x)<\tilde
\rho$ for all $x\in {\cal W}_0$, so that $U_0$ becomes a compact
function in ${\cal W}_0$. We note that $\bar {\cal
W}_0=\bar\Omega^0_{\tilde\rho}=\Omega^{0}_{\tilde \rho}\cup
U_0^{-1}(\tilde \rho)$.
\medskip

\begin{Lemma}\label{SAR} Let ${\cal E}_0$, ${\cal W}_0$, $U_0$,
$\tilde \rho$, $\gamma_0$, $\Omega^0_\rho$ be as in the above, and
$\cal A=\{A_\alpha\}\subset \tilde{\cal A}$ be a null family on
$\cal U$. Then there are positive constants $\rho_*,\rho^*$,
depending only on ${\cal W}_0, U_0$, with $\rho_*< \tilde \rho<
\rho^*$, such that as $d(A_\alpha,0) \ll 1$,
\begin{eqnarray*}
&& \mu_\alpha(\Omega^0_{\rho^*}\setminus {\cal
W}_0)=\mu_\alpha(\bar\Omega^0_{\rho^*}\setminus {\cal W}_0)\le
\rme^{-\frac{C_1}{a_1(\alpha)}}, \qquad\quad \hbox{when ${\cal E}_0$
is a
local attractor},\\
&&\mu_\alpha({\cal W}_0\setminus
\bar\Omega^0_{\rho_*})=\mu_\alpha(\bar{\cal W}_0\setminus
\Omega^0_{\rho_*})\le \rme^{-\frac{C_2}{a_2(\alpha)}}, \qquad\quad
\hbox{when ${\cal E}_0$ is a local repeller}
\end{eqnarray*}
for all  stationary measures $\{\mu_\alpha\}$ corresponding to
$\{{\cal L}_{A_\alpha}\}$, where
\begin{eqnarray*}
&& C_1= \frac{\gamma_0(\tilde\rho-\rho_*)\min_{x\in
\partial{\Omega^0_{\tilde\rho}}}|\nabla U_0(x)|}{ 2\max_{\rho_*\le U_0(x)\le \tilde\rho}
|\nabla U_0(x)|^2},\\
&& a_1(\alpha)=\max_{\rho_*\le U_0(x)\le \tilde\rho} |A_\alpha(x)|,\\
&& C_2= \frac{\gamma_0(\rho^*-\tilde\rho)\min_{x\in
\partial{\Omega^0_{\tilde\rho}}}|\nabla U_0(x)|}{2\max_{\tilde\rho\le U_0(x)\le \rho^*} |\nabla
U_0(x)|^2},\\
&& a_2(\alpha)= \max_{\tilde\rho\le U_0(x)\le \rho^*} |A_\alpha(x)|.
\end{eqnarray*}
\end{Lemma}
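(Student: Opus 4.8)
The plan is to use $U_0$ itself as a Lyapunov function, when ${\cal E}_0$ is a local attractor, or as an anti-Lyapunov function, when ${\cal E}_0$ is a local repeller, for the operator ${\cal L}_{A_\alpha}$ on a sublevel set $\Omega^0_{\rho^*}$ of $U_0$ slightly larger than ${\cal W}_0$, and then to read the two asserted exponential estimates directly off Proposition~\ref{unif} b) and Proposition~\ref{aunif} respectively.

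To set up the geometry, note that by \eqref{t4} one has $\nabla U_0\neq 0$ on $\partial{\cal W}_0=U_0^{-1}(\tilde\rho)$, so $\tilde\rho$ is a regular value of $U_0$ and $m_0:=\min_{x\in\partial{\cal W}_0}|\nabla U_0(x)|>0$; moreover $V\cdot\nabla U_0<-\gamma_0 m_0$ on $\partial{\cal W}_0$ when ${\cal E}_0$ is a local attractor and $V\cdot\nabla U_0>\gamma_0 m_0$ on $\partial{\cal W}_0$ when ${\cal E}_0$ is a local repeller, the sign being forced by whether trajectories cross $\partial{\cal W}_0$ inward or outward. Using continuity of $V\cdot\nabla U_0$ and of $\nabla U_0$ near $\partial{\cal W}_0$ together with Sard's theorem, I would choose regular values $\rho_*<\tilde\rho<\rho^*$ of $U_0$, close enough to $\tilde\rho$ and depending only on ${\cal W}_0$ and $U_0$ (the field $V$ being fixed), so that $\Omega^0_{\rho^*}$ is a connected, pre-compact subset of $\cal U$ on which $U_0$ is a compact function and so that $-V\cdot\nabla U_0\ge\gamma_0 m_0$ (attractor case), resp. $V\cdot\nabla U_0\ge\gamma_0 m_0$ (repeller case), holds throughout the closed shell $\{\rho_*\le U_0\le\rho^*\}$. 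Since $\bar\Omega^0_{\rho^*}$ is compact, Remark~\ref{null} gives $A_\alpha\to 0$ uniformly on it, so for $d(A_\alpha,0)\ll 1$ the diffusion term obeys $|a^{ij}_\alpha\partial^2_{ij}U_0|\le\frac{\gamma_0 m_0}{2}$ on $\Omega^0_{\rho^*}$; combined with the drift bound this gives ${\cal L}_{A_\alpha}U_0\le-\frac{\gamma_0 m_0}{2}=:-\gamma$ (attractor), resp. ${\cal L}_{A_\alpha}U_0\ge\gamma$ (repeller), on $\{\rho_*\le U_0\le\rho^*\}\supseteq\Omega^0_{\rho^*}\setminus\bar\Omega^0_{\rho_*}$. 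Hence $U_0$ is a Lyapunov, resp. anti-Lyapunov, function in $\Omega^0_{\rho^*}$ with respect to ${\cal L}_{A_\alpha}$, with essential lower bound $\rho_*$, essential upper bound $\rho^*$, and (anti-)Lyapunov constant $\gamma=\frac{\gamma_0}{2}\min_{\partial{\cal W}_0}|\nabla U_0|$; condition \eqref{t1} holds at almost every level by Sard, and \eqref{t2} holds with $H$ taken equal to the constant $a_1(\alpha)\max_{\rho_*\le U_0\le\tilde\rho}|\nabla U_0|^2$ on $[\rho_*,\tilde\rho]$ in the attractor case, resp. $a_2(\alpha)\max_{\tilde\rho\le U_0\le\rho^*}|\nabla U_0|^2$ on $[\tilde\rho,\rho^*]$ in the repeller case (and any fixed positive constant elsewhere on $[\rho_*,\rho^*)$), since $a^{ij}_\alpha\partial_iU_0\partial_jU_0\le|A_\alpha(x)|\,|\nabla U_0(x)|^2$ on every level set.

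In the attractor case, Proposition~\ref{unif} b) applied to $\mu_\alpha$ with $\rho=\tilde\rho$ then gives $\mu_\alpha({\cal U}\setminus\Omega^0_{\tilde\rho})\le\rme^{-\gamma\int_{\rho_*}^{\tilde\rho}\frac{1}{H(t)}\rmd t}=\rme^{-C_1/a_1(\alpha)}$ after the elementary integration, and since $\Omega^0_{\rho^*}\setminus{\cal W}_0=\Omega^0_{\rho^*}\setminus\Omega^0_{\tilde\rho}\subseteq{\cal U}\setminus\Omega^0_{\tilde\rho}$ the first asserted estimate follows. In the repeller case, Proposition~\ref{aunif} applied to $\mu_\alpha$ with $\rho_0=\tilde\rho$ and $\rho\uparrow\rho^*$ gives $\mu_\alpha(\Omega^0_{\rho^*}\setminus\bar\Omega^0_{\rho_*})\ge\mu_\alpha({\cal W}_0\setminus\bar\Omega^0_{\rho_*})\,\rme^{\gamma\int_{\tilde\rho}^{\rho^*}\frac{1}{H(t)}\rmd t}=\mu_\alpha({\cal W}_0\setminus\bar\Omega^0_{\rho_*})\,\rme^{C_2/a_2(\alpha)}$, and bounding the left-hand side by $1$ yields $\mu_\alpha({\cal W}_0\setminus\bar\Omega^0_{\rho_*})\le\rme^{-C_2/a_2(\alpha)}$. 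The two equalities with $\bar\Omega^0_{\rho^*}$ and $\bar{\cal W}_0$ hold because $\rho_*,\tilde\rho,\rho^*$ are regular values of $U_0$, so $U_0^{-1}(\rho_*)$, $U_0^{-1}(\tilde\rho)$ and $U_0^{-1}(\rho^*)$ are Lebesgue-null, hence $\mu_\alpha$-null, each $\mu_\alpha$ being absolutely continuous with respect to Lebesgue measure since $A_\alpha=\frac12 GG^\top$ with $G$ of rank $n$ is everywhere positive definite and the regularity theorem of \cite{BKR} applies.

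I expect the step requiring the most care to be turning $U_0$ into a (anti-)Lyapunov function for ${\cal L}_{A_\alpha}$ with \emph{exactly} the right constant: the factor $2$ in $C_1$ and $C_2$ is precisely the cost of sacrificing half of the deterministic drift $V\cdot\nabla U_0$ to dominate the diffusion term $a^{ij}_\alpha\partial^2_{ij}U_0$ on the compact shell, and this must be tracked carefully in the choice of $\rho_*,\rho^*$ and of the threshold for $d(A_\alpha,0)$. A secondary subtlety is the repeller case, where the wanted \emph{upper} bound on the mass of the inner shell ${\cal W}_0\setminus\bar\Omega^0_{\rho_*}$ is obtained from the \emph{lower} bound that Proposition~\ref{aunif} provides for the mass of the outer shell $\Omega^0_{\rho^*}\setminus\bar\Omega^0_{\rho_*}$, using only that the total mass of a probability measure equals $1$.
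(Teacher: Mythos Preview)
Your proposal is correct and follows essentially the same approach as the paper's own proof: choose $\rho_*<\tilde\rho<\rho^*$ close to $\tilde\rho$ so that on the shell $\{\rho_*\le U_0\le\rho^*\}$ one has $\nabla U_0\neq 0$ and $|{\cal L}_{A_\alpha}U_0|\ge\frac{\gamma_0}{2}\min_{\partial{\cal W}_0}|\nabla U_0|$ for $d(A_\alpha,0)\ll 1$, then read off the estimates from Proposition~\ref{unif}~b) in the attractor case and Proposition~\ref{aunif} (followed by bounding the resulting lower bound by the total mass $1$) in the repeller case, with the constant $H$ you indicate. The only cosmetic differences are that the paper obtains $\nabla U_0\neq 0$ on the whole shell by continuity (rather than invoking Sard for a.e.\ levels), and that it arranges the level-set geometry via the Implicit Function Theorem so that $\bar\Omega^0_\rho=\Omega^0_\rho\cup U_0^{-1}(\rho)$ for every $\rho\in(\rho_*,\rho^*)$, which gives the stated equalities of measures for the same reason you cite (the level sets are Lebesgue-null hypersurfaces and each $\mu_\alpha$ has a density by \cite{BKR}).
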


\begin{proof} By \eqref{t4},
there are $\rho_*,\rho^*$ with $\rho_*<\tilde\rho<\rho^*$ such that,
for all $ x\in \bar\Omega^0_{\rho^*}\setminus \Omega^0_{\rho_*}$,
$\nabla U_0(x)\neq 0$ and
\[
|{\cal L}_{A_\alpha} U_0(x)|= |a^{ij}_\alpha\partial^2_{ij} U_0(x)
+V(x)\cdot \nabla U_0(x)|\ge  \frac{\gamma_0 \min_{x\in
\partial{\Omega^0_{\tilde\rho}}} |\nabla U_0(x)|}2,
\]
as $d(A_\alpha,0)\ll 1$. Since $\mathcal{W}_0$ is connected, by
making $\rho_*,\rho^*$ sufficiently close to $\tilde\rho$ if
necessary, we may assume without loss of generality that
$\Omega_\rho^0$ is a connected open set for each $\rho\in
(\rho_*,\rho^*)$.  As in the proof of Lemma \ref{tight1}, the
Implicit Function Theorem implies that $\Omega^0_\rho$ is a $C^2$
domain and $\bar\Omega^0_\rho =\Omega_\rho^{0}\cup U_0^{-1}(\rho)$
for each $\rho\in (\rho_*,\rho^*)$.


In the case that ${\cal E}_0$ is a strong local attractor, the
remaining part of the lemma follows from Proposition \ref{unif} b)
with $\Omega_{\rho^*}^0$ in place of $\Omega$ and
$$\rho_M=:\rho^*,\;\;
\rho_m=\rho_*, \;\; \rho=:\tilde\rho, \; \;\gamma=:\frac{\gamma_0
\min_{x\in
\partial{\Omega^0_{\tilde\rho}}} |\nabla U_0(x)|}2, \;\;
H=a_1(\alpha)\max_{\rho_*\le U_0(x)\le \tilde\rho} |\nabla
U_0(x)|^2.
$$

In the case that ${\cal E}_0$ is a strong local repeller, the
remaining part of the  lemma follows from  Proposition \ref{aunif}
with ${\cal W}_0$ in place of $\Omega$ and
$$\rho=:\rho^*,\;
\rho_m=\rho_*, \;\; \rho_0=:\tilde\rho,\; \; \gamma=:\frac{\gamma_0
\min_{x\in
\partial{\Omega^0_{\tilde\rho}}} |\nabla U_0(x)|}2, \;\; H=a_2(\alpha)\max_{\tilde\rho\le U_0(x)\le \rho^*} |\nabla
U_0(x)|^2.
$$
\end{proof}
\medskip

Theorem~C a), b) follow from the following result.

\medskip

\begin{Theorem}\label{thm43} Assume  $V\in C^1(\cal U,\R^n)$ and that
\eqref{ode} admits a $C^2$  Lyapunov function with bounded second
derivatives on $\cal U$. Then the following holds.
\begin{itemize}

\item[{\rm a)}]
For any strong local attractor  ${\cal J}_0$ lying in $\cal U$,
there is a normal null family $\cal A\subset \tilde{\cal A}$ on
$\cal U$ which is both invariant and admissible such
that $\cal M_{\cal A}\neq \emptyset$ and each limit measure $\mu\in
{\cal M}_{\cal A}$ is an invariant measure of $\varphi^t$ supported
on ${\cal J}_0$. Consequently, ${\cal J}_0$ is $\cal A$-stable.
\item[{\rm b)}]
For any strong local  repeller ${\cal R}_0$ lying in $\cal U$, there
is a normal null family $\cal A\subset \tilde{\cal A}$
which is both invariant and admissible such that  $\cal M_{\cal A}\neq \emptyset$ and each limit measure $\mu\in
{\cal M}_{\cal A}$ is an invariant measure of $\varphi^t$ supported
 on the maximal attractor ${\cal J}_*$ of $\varphi^t$ in
 $\cal J\setminus {\cal W}_0$, where  $\cal J$ is  the global attractor  of
$\varphi^t$ and ${\cal W}_0$ is an isolating neighborhood of ${\cal
R}_0$. Consequently, ${\cal J}_*$ is $\cal A$-stable and ${\cal
R}_0$ is strongly $\cal A$-unstable.
\end{itemize}
\end{Theorem}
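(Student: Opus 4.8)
The plan is to deduce both parts from the machinery of \S2--\S3 once a suitable explicit null family has been produced. First I would record the part that needs no design: since \eqref{ode} admits a $C^2$ Lyapunov function with bounded Hessian on $\cal U$, Proposition~\ref{UNIF-S}, Remark~\ref{rk2.3}~1) and Theorem~\ref{t410} (with $\Omega=\cal U$) show that for every bounded null family $\cal A$ with $|A_\alpha|\ll1$ the family is admissible, $\cal M_{\cal A}\neq\emptyset$, and each $\mu\in\cal M_{\cal A}$ is $\varphi^t$-invariant and supported on the global attractor $\cal J$; if moreover $A_\alpha(x)\to0$ as $x\to\partial\cal U$, then $\cal A$ is invariant (Remark~\ref{well-post}~2)). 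Writing $\cal E_0$ for $\cal J_0$ in a) and $\cal R_0$ in b), Corollary~\ref{LS2}~a) already gives $\mu(\cal W_0\setminus\cal E_0)=0$, where $\cal W_0$ is the isolating neighborhood and $U_0\in C^2(\cal W_0)$ the (anti-)Lyapunov function of Proposition~\ref{attractor-repeller} satisfying \eqref{t4}. So everything reduces to producing a \emph{normal}, invariant null family for which in addition $\mu(\cal J\setminus\cal W_0)=0$ in case a), and $\mu(\cal W_0)=0$ together with $\mathrm{supp}\,\mu\subset\cal J_*$ in case b).

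For the family, I would fix a smooth $G_0$ of rank $n$ with $\Theta_0:=\tfrac12 G_0G_0^\top$ uniformly positive definite on compacta, with bounded first and second derivatives, and $\Theta_0(x)\to0$ as $x\to\partial\cal U$, together with a smooth, uniformly positive weight $\theta:\cal U\to(0,1]$ whose spatial profile is adapted to $\cal E_0$: made as small as the explicit thresholds below require on a shell just inside $\partial\cal W_0$ and on a neighborhood of the exceptional invariant set (the complementary repeller $\cal J\setminus B(\cal J_0)$ in case a), the set $\cal R_0$ in case b)), while bounded below elsewhere. Setting $A_\alpha:=\alpha\,\theta\,\Theta_0$ for $0<\alpha\le\alpha_0$ yields a normal null family (bounded condition number on pre-compact sets), invariant (degeneracy at $\partial\cal U$), and, for $\alpha_0\ll1$, admissible with limit measures on $\cal J$ as above. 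The thresholds on $\theta$ are exactly the ``explicit conditions'' promised in the statement, read off from the constants $C_1,C_2$ of Lemma~\ref{SAR} and from $H$ in Propositions~\ref{unif}--\ref{aunif}.

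In case a) I would combine three inputs about a limit measure $\mu$. (i) Lemma~\ref{SAR} (attractor case) with $U_0$ and Proposition~\ref{wt-kh} give $\mu(\Omega^0_{\rho^*}\setminus\bar{\cal W}_0)=0$, a barrier just outside $\cal W_0$. (ii) On a pre-compact isolating neighborhood $\cal W^*$ of the complementary compact invariant set $\cal J\setminus B(\cal J_0)$, its anti-Lyapunov function is a uniform anti-Lyapunov function with respect to $\{{\cal L}_{A_\alpha}\}$ for $\alpha\le\alpha_0$ (Proposition~\ref{UNIF-S}), so Theorem~\ref{anti-L}~b) gives $\mu|_{\cal W^*}$ supported on $\cal J\setminus B(\cal J_0)$, while the exponentially blowing-up lower bound of Proposition~\ref{aunif}, together with the Harnack inequality Proposition~\ref{Harn} applied on balls of radius comparable to $\lambda_\alpha$ (whose constant stays bounded precisely because $\cal A$ is normal and $|\nabla A_\alpha|\to0$), rules out a limiting atom on $\cal J\setminus B(\cal J_0)$; hence $\mu(\cal W^*)=0$. (iii) $\cal J\setminus(\cal W_0\cup\cal W^*)$ consists of non-recurrent connecting orbits, so the invariant measure $\mu$ charges it zero. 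Putting these together with the first paragraph forces $\mu(\cal J\setminus\cal W_0)=0$, hence $\mathrm{supp}\,\mu\subset\cal J_0$ by Corollary~\ref{LS2}~a); invariance is Theorem~\ref{tight0-1}/\ref{t410}, and $\cal A$-stability of $\cal J_0$ follows from Proposition~\ref{stable}.

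Case b) is dual: for $\alpha_0\ll1$, $U_0$ is a uniform anti-Lyapunov function on $\cal W_0$ (Proposition~\ref{UNIF-S}), so Theorem~\ref{anti-L}~b) gives $\mu(\cal W_0\setminus\cal R_0)=0$; Lemma~\ref{SAR} (repeller case) with $U_0$ and Proposition~\ref{wt-kh} give $\mu(\cal W_0\setminus\bar\Omega^0_{\rho_*})=0$; and Proposition~\ref{aunif} on a small isolating neighborhood of $\cal R_0$ together with the Harnack inequality (again controlled by normality) removes the residual mass, so $\mu(\cal R_0)=0$ and $\mu(\cal W_0)=0$. Then $\mu$ is supported on $\cal J\setminus\cal W_0$, and Theorem~\ref{t410} applied on a uniform-Lyapunov neighborhood of $\cal J\setminus\cal W_0$ identifies the support with the maximal attractor $\cal J_*$ there; $\cal A$-stability of $\cal J_*$ and strong $\cal A$-instability of $\cal R_0$ then follow. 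I expect the main obstacle to be precisely the joint requirement on $\theta$ — simultaneously making the auxiliary functions genuine uniform (anti-)Lyapunov functions with the explicitly required constants, keeping $\cal A$ a normal null family, and degenerating at $\partial\cal U$ to secure invariance — together with the delicate limiting step, where the Harnack inequality on $\lambda_\alpha$-scale balls (whose constant is bounded only because of normality and because $|\nabla A_\alpha|\to0$) is what excludes a limiting atom on the exceptional invariant set.
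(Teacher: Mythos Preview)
Your overall strategy diverges from the paper's and has a genuine gap at the core step. The paper does \emph{not} pass through an attractor--repeller decomposition of $\cal J$; it never needs the complementary set $\cal J\setminus B(\cal J_0)$ to be a strong local repeller, nor an anti-Lyapunov function on it, and none is assumed. Instead the paper fixes a pre-compact neighborhood $\Omega_0$ of $\cal J$ and in each case singles out three sets: the ``bad'' region $D$ (equal to $\bar\Omega_0\setminus\Omega^0_{\tilde\rho^*}$ in a), $\bar\Omega^0_{\tilde\rho_*}$ in b)), a thin shell $D^*$ adjacent to $D$ on which Lemma~\ref{SAR} yields $\mu_\alpha(D^*)\le \rme^{-C^*/\Lambda_\alpha(D_*)}$, and a disjoint shell $D_*$. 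It then covers $D$ by a \emph{fixed} finite collection of balls of fixed radius $R$ and chains the Harnack inequality over these balls, obtaining $\sup_{D_i}u_\alpha\le C_3\,\rme^{C_*/\lambda_\alpha(\Omega)}\,\mu_\alpha(D^*)/|D_i\cap D^*|$. The whole point is the explicit competition: one designs the normal null family so that $\lambda_\alpha(\Omega)/\Lambda_\alpha(D_*)>(C_*+1)/C^*$, i.e.\ the noise is \emph{large} on a neighborhood $\Omega$ of $D$ and \emph{small} on $D_*$; then $\mu_\alpha(D)\to 0$ and the rest follows from Proposition~\ref{invariant-measure}.

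Your design of $\theta$ is oriented the wrong way for this mechanism: you make $\theta$ small on a neighborhood of the ``exceptional invariant set'' (the complementary repeller in a), $\cal R_0$ in b)), whereas the paper needs the noise comparatively large precisely there (that set lies in $D\subset\Omega$) and small only on the shell $D_*$. Separately, your Harnack step on balls of radius $\sim\lambda_\alpha$ keeps the constant bounded per ball but forces $O(\lambda_\alpha^{-n})$ balls to cover a macroscopic region, so the chained constant is not controlled; the paper avoids this by using a fixed cover and accepting the factor $\rme^{C_*/\lambda_\alpha(\Omega)}$, which is then beaten by $\rme^{-C^*/\Lambda_\alpha(D_*)}$ via the design condition. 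Finally, your step ``Proposition~\ref{aunif} $+$ Harnack rules out a limiting atom on the exceptional set'' is not a proof: Proposition~\ref{aunif} only bounds mass in annuli, not on the core invariant set itself, and for a general (non-point) $\cal R_0$ or complementary repeller there is no reason the residual mass should be atomic. The missing idea is exactly the Harnack-versus-Lemma~\ref{SAR} balance on the sets $D,D^*,D_*$ with the ratio condition $\lambda_\alpha(\Omega)/\Lambda_\alpha(D_*)>(C_*+1)/C^*$.
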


\begin{proof} Let ${\cal E}_0$ be either ${\cal J}_0$ or ${\cal R}_0$,
and ${\cal W}_0$ be an isolating neighborhood of ${\cal E}_0$. We
note that since $\cal U$ is dissipative, ${\cal E}_0$ must lie in
the global attractor ${\cal J}$ of $\varphi^t$.

First of all, we fix a pre-compact,  connected open neighborhood
$\Omega_0$ of $\cal J$ in $\cal U$. Let $\cal
A=\{A_\alpha\}=\{(a^{ij}_\alpha)\}\subset \tilde{\cal A}$ be any
bounded null family. We have by Proposition~\ref{UNIF-S} that there
is a uniform Lyapunov function with respect to $\{{\cal
L}_{A_\alpha}\}$ in $\cal U$. Hence by
Remark~\ref{rk2.3} 1) $\cal A$ is admissible. Denote
 $\{\mu_\alpha\}$ as the set of all   stationary
measures  corresponding to $\{{\cal L}_{A_\alpha}\}$. It follows
from Lemma~\ref{tight} a) that $\{\mu_\alpha\}$ is $\cal
A$-sequentially null compact in $M(\cal U)$ and hence $\cal M_{\cal
A}\neq\emptyset$. By Theorem~\ref{t410}, any $\mu\in \cal M_{\cal
A}$ is an invariant measure of $\varphi^t$ supported on the global
attractor $\cal J$.

If $\cal U=\R^n$, then $\cal A$ is clearly invariant. Otherwise, one
can modify each $A\in \cal A$ near $\partial{\cal U}$ such that
$A(x)\to 0$ as $x\to \partial{\cal U}$. Since \eqref{ode} admits a
Lyapunov function in $\cal U$, it is not hard to see that the
modified normal null family $\cal A$ becomes invariant.

Let $\rho_*$, $\rho^*$, $\tilde\rho$, $\Omega^0_\rho$,
$\rho\in [\rho_*,\rho^*]$, be  as in Lemma~\ref{SAR}.  Recall that
$\rho_*<\tilde\rho<\rho^*$ and $\bar{\cal W}_0=\bar\Omega^0_{\tilde
\rho}$. By taking $\rho^*$ further small if necessary, we can assume
 $\bar\Omega^0_{\rho^*}\subset \Omega_0$.

For fixed $\tilde{\rho}^*\in (\tilde\rho,\rho^*)$,
$\tilde{\rho}_*\in (\rho_*,\tilde\rho)$, we consider sets
\begin{eqnarray*}
D&=&\left\{\begin{array}{ll} \bar\Omega_{0}\setminus
\Omega^0_{{\tilde\rho}^*},&\hbox{when ${\cal E}_0$ is a local
attractor},\\
 \bar\Omega^0_{{\tilde \rho}_*},& \hbox{when ${\cal E}_0$ is a local
repeller},
\end{array}\right.\\
D^*&=&\left\{\begin{array}{ll} \bar\Omega^0_{\rho^*}\setminus
\Omega^0_{\tilde\rho},& \hbox{when ${\cal E}_0$ is a local
attractor},\\
 \bar\Omega^0_{\tilde \rho}\setminus
\Omega^0_{\rho_*},& \hbox{when ${\cal E}_0$ is a local
repeller}, \end{array}\right.\\
D_*&=&\left\{\begin{array}{ll} \bar\Omega^0_{\tilde\rho}\setminus
\Omega^0_{\rho_*},& \hbox{when ${\cal E}_0$ is a local
attractor},\\
 \bar\Omega^0_{\rho^*}\setminus
\Omega^0_{\tilde\rho},& \hbox{when ${\cal E}_0$ is a local
repeller}.
\end{array}\right.
\end{eqnarray*}

Let  $\Omega$ be a fixed small  neighborhood   of $D$ in $\cal U$
with $\bar\Omega\cap D_*=\emptyset$. Then there are small balls
$\{B_R^k(y_k)\}_{k=1}^N$ of radius $R$ centered at $y_k\in D$,
$k=1,\cdots,N$, whose union  covers $D$, and,
$$\bigcup_{k=1}^NB_{4R}^k(y_k)\subset \Omega.$$ Using the facts that $\bar\Omega^0_{\rho^*}\subset \Omega_0$,
$\Omega_0$ is connected, and $\Omega^0_\rho$ is a connected open set
with oriented $C^2$ boundary for each $\rho\in (\rho_*,\rho^*)$, we
see that $D$ has finitely many components, say,
$D_1,D_2,\cdots,D_I$. Since $\partial \Omega^0_{{\tilde \rho}^*}
\subset \text{int}(\bar\Omega^0_{\rho^*}\setminus
\Omega^0_{\tilde\rho})$ and $\partial \Omega^0_{{\tilde \rho}_*}
\subset
\text{int}(\Omega^0_{\tilde\rho}\setminus\bar\Omega^0_{\rho_*} )$,
it is not hard to see that $|D_i\cap D^*|>0$ for all
$i=1,2,\cdots,I$.

We now restrict $\{A_\alpha\}$ to be a normal null
family satisfying $A_\alpha\in C^1(\cal U,GL(n,\R))$ for each
$\alpha$ and
\begin{equation}\label{restr1}
\sup_\alpha \tilde\Lambda_\alpha(\Omega)<1,
\end{equation}
 where, for each $\alpha$,
\[
\tilde \Lambda_\alpha(\Omega)=\sup_{x\in \Omega}|\partial
A_\alpha(x)|.
\]
For each  $\alpha$, denote $u_\alpha$ as the weak stationary
solution corresponding to ${\cal L}_{A_\alpha}$ that is associated
with  the measure $\mu_\alpha$. We also let $\Lambda_\alpha,
\lambda_\alpha$ be the quantities defined as in
Definition~\ref{nullD} on each pre-compact open subset of $\cal U$
for the restricted family $\{A_\alpha\}$.

 Note that the above neighborhood $\Omega$ of $D$ can be chosen
such that the number of components of $\Omega$ is at most $I$.
Therefore,  applying Proposition~\ref{Harn} in $\Omega$ with
$b^i=\partial_j a^{ij}_\alpha-V^i$, $c^i=d\equiv 0$, $i=1,\cdots,n$,
we have
\[
\sup_{B_R(y_i)}u_{\alpha}(x)\le C_0^{(C_1+\frac {C_2
R}{\lambda_\alpha(\Omega)})}\inf_{B_R(y_i)} u_{\alpha}(x),\quad
i=1,\cdots,N,
\]
where $C_0\ge 1$ is a positive constant depending only on $n$ and
$\Omega$, $C_1=\sup_\alpha
\frac{\Lambda_\alpha(\Omega)}{\lambda_\alpha(\Omega)}<\infty$,
 and
$C_2=2n(\sup_\alpha\tilde\Lambda_\alpha(\Omega)+\sup_{x\in
\Omega}|V(x)|)\le 2n(1+\sup_{x\in \Omega}|V(x)|)<\infty$.

Let $C_3=C_0^{NC_1}$ and $C_*=NC_2R\ln C_0$. Then for each
 $i=1,2,\cdots,I$,
\begin{eqnarray}
\sup_{D_i}u_{\alpha}(x)&\le& C_3 \rme^{\frac
{C_*}{\lambda_\alpha(\Omega)}}\inf_{D_i} u_{\alpha}(x)\le C_3
\rme^{\frac {C_*}{\lambda_\alpha(\Omega)}}\inf_{D_i\cap D^*}
u_{\alpha}(x)\nonumber\\
&\le& \frac{C_3}{|D_i\cap D^*|} \rme^{\frac
{C_*}{\lambda_\alpha(\Omega)}}\mu_\alpha(D_i\cap D^*)\le
\frac{C_3}{|D_i\cap D^*|} \rme^{\frac
{C_*}{\lambda_\alpha(\Omega)}}\mu_\alpha(D^*).\label{u-est}
\end{eqnarray}

By Lemma~\ref{SAR}, there is a positive constant $C^*$ depending
only on $U_0$ and $D_*$ such that
\begin{equation}\label{DD}
\mu_\alpha(D^*)\le \rme^{-\frac{C^*}{\Lambda_\alpha(D_*)}},
\end{equation}
as $|A_\alpha|\ll 1$.

It now follows from \eqref{u-est} and \eqref{DD} that
\[
\mu_\alpha(D)\le \sum \limits_{i=1}^I |D_i|\sup_{D_i}
u_\alpha(x)\le \sum \limits_{i=1}^I
\frac{|D_i|C_3}{|D_i\cap D^*|} \rme^{-\frac
{C^*}{\Lambda_\alpha(D_*)}+\frac {C_*}{\lambda_\alpha(\Omega)}}.
\]
Therefore, if we further restrict $\{A_\alpha\}$ to be such that
\begin{equation}\label{restr2}
\frac{ \lambda_\alpha(\Omega)}{\Lambda_\alpha(D_*)}> \frac{C_*+1}{C^*}
\end{equation}
for all $\alpha$, then $\mu_\alpha(D)\to 0$ as $A_\alpha\to 0$.
Since $D=\bar\Omega_0\setminus\Omega^0_{\tilde\rho^*}$ when $\cal
E_0$ is a local attractor  and $D=\bar\Omega^0_{\tilde\rho_*}$ when
$\cal E_0$ is a local repeller, it follows  that
\begin{equation}\label{Din}
\left\{
\begin{array}{ll}
  \mu_\alpha(\Omega_0\setminus\bar\Omega^0_{\tilde\rho^*})\to 0, & \hbox{when }\cal E_0 \hbox{ is a local attractor}, \\
  \mu_\alpha(\Omega^0_{\tilde\rho_*})\to 0, &  \hbox{when }\cal E_0 \hbox{ is a local
  repeller},
\end{array}\right.
\end{equation}
 as $A_\alpha\to 0$.
As $\Omega$ and $D_*$ are separated by a positive distance,  it is
easy to construct a null families, still denoted by $\cal A$, such
that both \eqref{restr1} and \eqref{restr2} hold simultaneously.

Let $\mu\in {\cal M}_{\cal A}$. Then it follows from Proposition
\ref{wt-kh} and \eqref{Din} that $\mu$ is supported on
$\bar\Omega^0_{\tilde\rho^*}$ in the case a) and on
$\Omega_0\setminus \Omega^0_{\tilde\rho_*}$ in the case b), both of
which are positively invariant set of $\varphi^t$.  It follows from
Proposition \ref{invariant-measure} that $\mu$ is actually supported
on $\omega(\bar\Omega^0_{\tilde\rho^*})$ which equals ${\cal J}_0$
in the case a) and on $\omega(\Omega_0\setminus
\Omega^0_{\tilde\rho_*})$ which equals ${\cal J}_*$ in the case b).
By Proposition~\ref{stable}, both ${\cal J}_0$ and ${\cal J}_*$ are
$\cal A$-stable.
\end{proof}
\medskip

The above theorem naturally applies to families of the form
$\{\epsilon A\}\subset \cal A$, where $\epsilon>0$ is a small
parameter and $A$ is a bounded, everywhere positive definite,
$n\times n$ matrix-valued $C^1$ function on $\cal U$.

\begin{Corollary}
Assume  $V\in C^1(\cal U,\R^n)$ and that \eqref{ode} admits a $C^2$
Lyapunov function with bounded second derivatives on $\cal U$. Then
there are bounded, everywhere positive definite, $n\times n$
matrix-valued $C^1$ functions $A_i$, $i=1,2$, on $\cal U$,  such
that {\rm a)}, {\rm b)} of {\rm Theorem~\ref{thm43}} hold
respectively for the family $\{\epsilon A_i\}$, $i=1,2$, as
$\epsilon\to 0$.
\end{Corollary}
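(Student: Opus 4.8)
The plan is to revisit the proof of Theorem~\ref{thm43} and observe that the null family $\cal A$ produced there was only required to satisfy finitely many explicit conditions, each of which can be met by a family of the scalar form $\{\epsilon A_i\}$ with $A_i=\psi_i\,I$, where $I$ is the $n\times n$ identity and $\psi_i\in C^1(\cal U)$ is a suitable positive bounded scalar function. First I would run the construction of the proof of Theorem~\ref{thm43}, separately for the strong local attractor ${\cal J}_0$ (case a)) and for the strong local repeller ${\cal R}_0$ (case b)), up to the point where the geometric data are fixed: the isolating neighborhood ${\cal W}_0$, the function $U_0$ with levels $\tilde\rho,\rho_*,\rho^*$, the neighborhood $\Omega_0$ of the global attractor, the auxiliary levels $\tilde\rho^*,\tilde\rho_*$, the sets $D,D^*,D_*$, a finite ball cover $\{B^k_R(y_k)\}_{k=1}^N$ of $D$, and an open set $\Omega\supset D$ with $\bar\Omega\cap D_*=\emptyset$; all of these depend only on $V$, $\cal U$, the fixed $C^2$ Lyapunov function of \eqref{ode}, and the set ${\cal E}_0\in\{{\cal J}_0,{\cal R}_0\}$ under consideration, and not on the noise. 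In particular $\bar\Omega$ and $D_*$ are disjoint compact subsets of $\cal U$ separated by a positive distance. If one further arranges $\psi_i$ to be constant on a neighborhood of $\bar\Omega$ (see below), then the constants $C_0=C_0(n,\Omega)$, $C_2=2n\sup_\Omega|V|$, $C_*=NC_2R\ln C_0$ of that proof and the constant $C^*=C^*(U_0,D_*)$ of Lemma~\ref{SAR} are fixed positive numbers, so $\kappa_i:=(C_*+1)/C^*$ is a fixed positive number determined by the geometry alone.

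Next I would pick, for $i=1,2$, a bounded, everywhere positive definite, $n\times n$ matrix-valued $C^1$ function $A_i=\psi_i\,I$, with $\psi_i\in C^1(\cal U)$, $\psi_i>0$ on $\cal U$, $\psi_i\equiv M_i$ on an open neighborhood of $\bar\Omega$, $\psi_i\equiv\eta_i$ on an open neighborhood of $D_*$, $\psi_i$ bounded on $\cal U$ with $\|\psi_i\|_\infty$ as small as we like, and $\psi_i(x)\to0$ as $x\to\partial\cal U$; here $M_i,\eta_i>0$ are constants with $M_i/(\sqrt n\,\eta_i)>\kappa_i$. Such $\psi_i$ exists because $\bar\Omega$ and $D_*$ are disjoint compacta in the connected open set $\cal U$, which can be exhausted from inside by pre-compact open sets. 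Then $\{\epsilon A_i\}$ is a bounded null family in $\tilde{\cal A}$ --- indeed $\epsilon A_i=\frac12 G_iG_i^\top$ with $G_i=\sqrt{2\epsilon\psi_i}\,I$ of rank $n$ and $\sqrt{2\epsilon\psi_i}\in C^1(\cal U)\subset W^{1,2p}_{loc}(\cal U)$ since $\psi_i>0$; it is normal, because for any pre-compact open $\Omega'\subset\cal U$ one has, in the notation of Definition~\ref{nullD}, $\Lambda_{\epsilon A_i}(\Omega')/\lambda_{\epsilon A_i}(\Omega')=\sqrt n\,\sup_{\Omega'}\psi_i/\inf_{\Omega'}\psi_i$, finite and independent of $\epsilon$ as $\inf_{\Omega'}\psi_i>0$; and it is invariant, automatically when $\cal U=\R^n$ and otherwise by Remark~\ref{well-post}~2), since $A_i(x)\to0$ as $x\to\partial\cal U$ and \eqref{ode} admits a Lyapunov function in $\cal U$. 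For all $\epsilon$, Proposition~\ref{UNIF-S} and Remark~\ref{rk2.3}~1) give admissibility (using that the $C^2$ Lyapunov function of \eqref{ode} has bounded second derivatives and $\|\psi_i\|_\infty$ is small), and the smallness conditions $d(A_\alpha,0)\ll1$, $|A_\alpha|\ll1$ used in Lemma~\ref{SAR}, Lemma~\ref{tight} and Theorem~\ref{t410} hold along the family.

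Finally I would verify the two remaining requirements \eqref{restr1}, \eqref{restr2}. Since $\psi_i$ is constant on a neighborhood of $\bar\Omega$, $\partial(\epsilon A_i)\equiv0$ on $\Omega$, hence $\tilde\Lambda_{\epsilon A_i}(\Omega)=\sup_\Omega|\partial(\epsilon A_i)|=0<1$, which is \eqref{restr1}; this also makes $C_2$, and hence $C_*$ and $\kappa_i$, free of any dependence on $\psi_i$, so the choice of $M_i,\eta_i$ can indeed be made after $\kappa_i$ has been fixed. Likewise $\lambda_{\epsilon A_i}(\Omega)=\epsilon M_i$ and $\Lambda_{\epsilon A_i}(D_*)=\epsilon\sqrt n\,\eta_i$, so $\lambda_{\epsilon A_i}(\Omega)/\Lambda_{\epsilon A_i}(D_*)=M_i/(\sqrt n\,\eta_i)>\kappa_i=(C_*+1)/C^*$ for every $\epsilon$, which is \eqref{restr2} (while $\Lambda_{\epsilon A_i}(\Omega)/\lambda_{\epsilon A_i}(\Omega)=\sqrt n$ keeps the prefactor $C_3=C_0^{NC_1}$ finite). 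Therefore $\{\epsilon A_1\}$ meets all hypotheses imposed in case a) of Theorem~\ref{thm43} and $\{\epsilon A_2\}$ those of case b), and the conclusions of Theorem~\ref{thm43} follow verbatim for these two families as $\epsilon\to0$. The only delicate point --- bookkeeping rather than a genuine obstacle --- is checking that none of the constants entering \eqref{restr2} depends on the profile $\psi_i$, which is exactly why $\psi_i$ is chosen constant near $\bar\Omega$; once this is clear, the argument is a direct specialization of Theorem~\ref{thm43}.
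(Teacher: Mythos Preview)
Your proposal is correct and follows essentially the same approach as the paper's own proof: both go back into the proof of Theorem~\ref{thm43}, isolate the two conditions \eqref{restr1} and \eqref{restr2} that the null family must satisfy, and then observe that a family of the form $\{\epsilon A\}$ with a well-chosen fixed $A$ meets them (since $\epsilon$ cancels in the ratio $\lambda_\alpha(\Omega)/\Lambda_\alpha(D_*)$). The paper's version is terser --- it simply states the ratio condition $\inf_\Omega\lambda_A/\max_{D_*}\Lambda_A>(C_*+1)/C^*$ and leaves the construction of $A$ implicit --- whereas you give the explicit scalar construction $A_i=\psi_i I$ with $\psi_i$ constant on neighborhoods of $\bar\Omega$ and $D_*$. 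Your extra precaution of making $\psi_i$ constant near $\bar\Omega$ so that $\tilde\Lambda_\alpha(\Omega)=0$ and hence $C_2=2n\sup_\Omega|V|$ is clean but not strictly necessary: in the paper's proof of Theorem~\ref{thm43}, once \eqref{restr1} holds one already has the $A$-independent bound $C_2\le 2n(1+\sup_\Omega|V|)$, so $C_*$ can be fixed before choosing $A$ in any case.
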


\begin{proof}
For any bounded, everywhere positive definite, $n\times n$
matrix-valued $C^1$ function $A$ on $\cal U$, $\{\epsilon A\}$ is
clearly a normal null family.  In the case that $\cal
U\ne \R^n$, the invariance of $\{\epsilon A\}$ is guaranteed if
$A(x)\to 0$ as $x\to \partial{\cal U}$. The corollary now follows
from Theorem~\ref{thm43}.

In fact, by the proof of Theorem~\ref{thm43}, we just need to choose
$A=(a^{ij})$ such that
$$
\frac{\inf_\Omega \lambda_A (x)}{\max_{D_*}\Lambda_A(x)}>\frac{C_*+1}{C^*},
$$
where $\Omega, D_*, C_*,C^*$ are as in the proof of
Theorem~\ref{thm43}, $\Lambda_A(x)=\sqrt{\sum_{i,j} |a^{ij}(x)|^2}$,
and $\lambda_A(x)$ is the smallest eigenvalue of $A(x)$ for each
$x\in \cal U$.
\end{proof}

\subsection{Uniform de-stabilization of a locally repelling
equilibrium} We note  that if ${\cal R}_0$ is a strong local
repeller, then in general not every normal null family $\cal
A\subset \tilde{\cal A}$  can de-stabilize ${\cal R}_0$. This is
because ${\cal R}_0$ may be further decomposed to contain a strong
local attractor for which Theorem~\ref{thm43} a) is applicable.

We now show that if  ${\cal R}_0$ is a strongly repelling
equilibrium (see Definition~\ref{D220}), then it is de-stabilized by
any normal null family $\cal A\subset \tilde{\cal A}$. This
particularly implies part c) of Theorem~C.

\begin{Theorem}\label{equili}  Assume  $V\in C^1(\cal U,\R^n)$ and that
\eqref{ode} admits a $C^2$  Lyapunov function with bounded second
derivatives on $\cal U$. Let $x_0$ be a strongly repelling
equilibrium of $\varphi^t$ contained in the global attractor $\cal
J$ of $\varphi^t$ and ${\cal W}$ be an  isolating neighborhood  of
$x_0$. Then with respect to any  normal null family $\cal
A=\{A_\alpha\}\subset \tilde{\cal A}$,  $\cal M_{\cal
A}\neq\emptyset$, and moreover, each $\mu\in {\cal M}_{\cal A}$ is
supported on the maximal attractor ${\cal J}_*$ of $\varphi^t$ in
$\cal J\setminus \cal W$. Consequently,  with respect to any
invariant, normal null family $\cal A$, ${\cal J}_*$ is $\cal
A$-stable and $\{x_0\}$ is strongly $\cal A$-unstable.
\end{Theorem}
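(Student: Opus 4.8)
The plan is to reduce, via the uniform-Lyapunov theory of Sections~2--3, to a single concentration statement at $x_0$, and then prove it by a localized anti-Lyapunov estimate reinforced by the Harnack inequality. For the reduction: since \eqref{ode} admits a $C^2$ Lyapunov function with bounded second derivatives on $\cal U$, Proposition~\ref{UNIF-S} supplies a uniform Lyapunov function in $\cal U$ with respect to $\{{\cal L}_{A_\alpha}\}$ once $d(A_\alpha,0)\ll1$, which holds along the tail of any sequence $A_\alpha\to0$; hence $\cal A$ is admissible (Remark~\ref{rk2.3}~1)), $\{\mu_\alpha\}$ is $\cal A$-sequentially null compact in $M(\cal U)$ (Lemma~\ref{tight}~a) with $\Omega=\cal U$), so $\cal M_{\cal A}\ne\emptyset$, and each $\mu\in\cal M_{\cal A}$ is an invariant measure of $\varphi^t$ supported on the global attractor $\cal J$ (Theorem~\ref{t410}~a)). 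It therefore suffices to show that $x_0\notin{\rm supp}(\mu)$ for every $\mu\in\cal M_{\cal A}$: then ${\rm supp}(\mu)$ is a compact invariant subset of $\cal J$ disjoint from the repeller $\{x_0\}$, hence contained in the complementary attractor ${\cal J}_*$ (the maximal attractor of $\varphi^t$ in $\cal J\setminus\cal W$) by the attractor--repeller theory recalled in Section~6, and the $\cal A$-stability of ${\cal J}_*$ and the strong $\cal A$-instability of $\{x_0\}$ follow from Proposition~\ref{stable}, Lemma~\ref{tight}~a), and Definition~\ref{2.5}.

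First I would install a local anti-Lyapunov function. By Definition~\ref{D220} and the construction behind Proposition~\ref{attractor-repeller}, on some ball $B_{r_0}(x_0)$ with $\bar B_{r_0}(x_0)\subset\cal U$ the flow admits a positive-definite quadratic anti-Lyapunov function; a linear change of coordinates lets us take it to be $U_0(x)=|x-x_0|^2$, so that $V(x)\cdot\nabla U_0(x)\ge 2c\,U_0(x)$ on $B_{r_0}(x_0)$ for some $c>0$, while $V(x_0)=0$ and $V\in C^1$ give $|V(x)|\le C_V|x-x_0|$ there. For $0<\rho_m<\rho_0<\rho<r_0^2$ the sublevel sets $\Omega^0_\rho=B_{\sqrt\rho}(x_0)$ are smooth with $\partial\Omega^0_\rho=U_0^{-1}(\rho)$, and since $a^{ij}_\alpha\partial^2_{ij}U_0=2\,{\rm tr}\,A_\alpha\ge0$, $U_0$ is an anti-Lyapunov function with respect to each ${\cal L}_{A_\alpha}$ with $\alpha$-independent constant $\gamma=2c\rho_m$ and $a^{ij}_\alpha\partial_iU_0\partial_jU_0\le H(\rho):=4\Lambda_\alpha(B_{r_0})\rho$; Proposition~\ref{aunif} then gives, for every stationary measure $\mu_\alpha$,
\[ \mu_\alpha\bigl(\Omega^0_{\rho_0}\setminus\bar\Omega^0_{\rho_m}\bigr)\ \le\ \bigl(\rho_0/\rho\bigr)^{\,c\rho_m/(2\Lambda_\alpha(B_{r_0}))}. \]
As $\Lambda_\alpha(B_{r_0})\to0$ along $A_\alpha\to0$ (Remark~\ref{null}), the right side $\to0$ for fixed $\rho_m,\rho_0,\rho$; by Proposition~\ref{wt-kh} any $\mu\in\cal M_{\cal A}$ has $\mu(\Omega^0_{\rho_0}\setminus\bar\Omega^0_{\rho_m})=0$, and letting $\rho_m\downarrow0$ gives $\mu(\Omega^0_{\rho_0}\setminus\{x_0\})=0$. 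Thus $\mu$ coincides with $\mu(\{x_0\})\,\delta_{x_0}$ on a ball about $x_0$, and only $\mu(\{x_0\})=0$ remains.

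The delicate step, and the main obstacle, is killing this atom, since the anti-Lyapunov estimate controls only annular shells around $x_0$, not the core. To remove it I would bound $\mu_\alpha(B_r(x_0))$ for one fixed small $r\in(0,r_0/4)$, via the Harnack inequality for the positive density $u_\alpha\in W^{1,p}_{loc}$ of $\mu_\alpha$ (which exists by the regularity theorem of \cite{BKR}, each $A_\alpha$ being everywhere positive definite). Writing the stationary equation as $\partial_i(a^{ij}_\alpha\partial_ju_\alpha+b^i_\alpha u_\alpha)=0$ with $b^i_\alpha=\partial_ja^{ij}_\alpha-V^i$, dividing by $\lambda_\alpha(B_{r_0})>0$, and applying Proposition~\ref{Harn} on $B_{4r}(x_0)$ --- in the version valid for first-order coefficients in $L^p$, $p>n$; a general normal family need not be $C^1$, so one uses that version or mollifies $A_\alpha$ and passes to the limit, exploiting $\|\partial A_\alpha\|_{L^p(B_{r_0})}\to0$ --- one obtains $\sup_{B_r(x_0)}u_\alpha\le C_\alpha\inf_{B_r(x_0)}u_\alpha$ with
\[ C_\alpha\ \le\ C_0(n,p)^{\,K+\lambda_\alpha(B_{r_0})^{-1}(\,\delta_\alpha\,r^{1-n/p}+C'_V\,r^{2}\,)}, \]
where $K=\sup_\alpha\Lambda_\alpha(B_{r_0})/\lambda_\alpha(B_{r_0})<\infty$ by normality, $C'_V$ depends only on $C_V,n,p$, and $\delta_\alpha=\|\partial A_\alpha\|_{L^p(B_{4r}(x_0))}\to0$. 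Bounding $\inf_{B_r(x_0)}u_\alpha$ by the $u_\alpha$-average over the shell $\{r/4\le|x-x_0|\le 3r/4\}$ and feeding in the previous estimate with $\rho_m=r^2/16$, $\rho_0=9r^2/16$, $\rho=r_0^2$ (which contributes a factor $\exp(-c'(r)\,r^2/\lambda_\alpha(B_{r_0}))$ with $c'(r)=\tfrac{c}{32K}\ln\tfrac{16r_0^2}{9r^2}\to\infty$ as $r\downarrow0$), one gets
\[ \mu_\alpha(B_r(x_0))\ \le\ C\exp\!\Bigl(\lambda_\alpha(B_{r_0})^{-1}\bigl[\,(\delta_\alpha r^{1-n/p}+C'_Vr^2)\ln C_0-c'(r)\,r^2\,\bigr]\Bigr). \]
The crux is that near the equilibrium $V$ vanishes, so the Harnack exponent grows only like $r^2/\lambda_\alpha$, whereas the anti-Lyapunov repulsion yields a gain of order $\ln(1/r)\cdot r^2/\lambda_\alpha$: fix $r$ so small that $c'(r)>C'_V\ln C_0$, then $\delta_\alpha\to0$ makes the bracket eventually negative, so $\mu_\alpha(B_r(x_0))\to0$, whence $\mu(\{x_0\})\le\liminf_\alpha\mu_\alpha(B_r(x_0))=0$ by Proposition~\ref{wt-kh}, closing the reduction. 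Apart from this balancing of two competing $\lambda_\alpha^{-1}$-rates (and the minor matter of an $L^p$ rather than bounded first-order coefficient), everything else is a routine repetition of the estimates behind Lemma~\ref{SAR} and Theorem~\ref{thm43}.
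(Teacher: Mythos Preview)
Your reduction to showing $\mu(\{x_0\})=0$, and the subsequent appeal to Proposition~\ref{invariant-measure} and Proposition~\ref{stable}, matches the paper. But your route to $\mu(\{x_0\})=0$ is substantially more complicated than necessary. You use only the drift to produce the anti-Lyapunov constant $\gamma=2c\rho_m$ (treating $a^{ij}_\alpha\partial^2_{ij}U_0\ge0$ as a throwaway), which forces $\rho_m>0$ and leaves the atom at $x_0$ unaddressed; you then bring in the Harnack inequality and a balance of two competing $\lambda_\alpha^{-1}$-rates. The paper instead exploits condition \eqref{P1} of Definition~\ref{D220} --- positive definiteness of $D^2U$ --- to let the \emph{diffusion term itself} supply the anti-Lyapunov constant: ${\cal L}_{A_\alpha}U\ge{\rm tr}(A_\alpha D^2U)\ge\lambda_\alpha^{\bar\rho}\lambda_U^{\bar\rho}=:\gamma_\alpha>0$ on all of $\Omega_{\bar\rho}$, i.e.\ with essential lower bound $\rho_m=0$. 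With $|\nabla U|^2\le C_2U$ one takes $H^\alpha(\rho)=C_2\Lambda_\alpha\rho$, and Proposition~\ref{aunif} (plus regularity of $\mu_\alpha$, so $\mu_\alpha(\{x_0\})=0$) yields $\mu_\alpha(\Omega_{\rho_0})\le(\rho_0/\bar\rho)^{\lambda_U^{\bar\rho}\lambda_\alpha^{\bar\rho}/(C_2\Lambda_\alpha)}$; normality makes the exponent $\ge C>0$ uniformly in $\alpha$, so $\mu(\{x_0\})\le\lim_{\rho_0\to0}(\rho_0/\bar\rho)^C=0$ directly. No Harnack, no $L^p$-coefficient issue, no rate balancing; Theorem~\ref{anti-L}~b) then handles the rest of $\cal W$.

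There is also a small gap in your setup: the inequality $V\cdot\nabla U_0\ge 2cU_0$ amounts to positive definiteness of the symmetric part of $DV(x_0)$ in the adapted coordinates, which \eqref{P3} alone does not guarantee --- it only gives strict positivity on the punctured neighborhood, possibly of higher than quadratic order at $x_0$. In that degenerate case your annular decay exponent becomes $o(r^2)/\lambda_\alpha$ and can lose to the Harnack growth $\sim r^2/\lambda_\alpha$, so the balancing step would fail. The paper's argument is immune to this, since its $\gamma_\alpha$ comes from \eqref{P1}, not from \eqref{P3}.
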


\begin{proof}  Let $\cal
A=\{A_\alpha\}=\{(a^{ij}_\alpha)\}\subset \tilde{\cal A}$ be any
bounded null family such that $|A_\alpha|\ll 1$. By
Proposition~\ref{UNIF-S}, $\{{\cal L}_{A_\alpha}\}$ admits a uniform
Lyapunov function in $\cal U$.   By Proposition~\ref{exist}, for
each $\alpha$, there exists a stationary measure  $\mu_\alpha$
corresponding to $\{{\cal L}_{A_\alpha}\}$  which is also regular,
i.e., $\rmd\mu_\alpha( x)=u_{\alpha}(x)\rmd x $ for some stationary
solution $u_{\alpha}\in W^{1,{p}}_{loc}(\cal U)$ corresponding to
${\cal L}_{A_\alpha}$ in $\cal U$.

Without loss of generality, we assume  $x_0=0$. In virtue of
Definition~\ref{D220}, let $U$ be an entire weak anti-Lyapunov
function associated with $\cal W$ that satisfies
\eqref{P1}-\eqref{P3}. We note  that since ${\cal W}\subset \cal J$,
it is a pre-compact set in $\cal U$. For each $\rho\in [0,\rho_M)$,
where $\rho_M$ is the essential upper bound of $U$, we denote
$\Omega_\rho$ as the $\rho$-sublevel set of $U$.  We note by
\eqref{P3} that $\partial\Omega_\rho=U^{-1}(\rho)$ and hence
$\bar\Omega_{\rho}=\{x\in\cal W: U(x)\le \rho\}$ for any
$\rho\in(0,\rho_M)$.

Fix $\bar\rho\in(0,\rho_M)$. Let $\lambda_\alpha^{\bar\rho}
:=\inf_{x\in {\Omega_{\bar\rho}}} \lambda_\alpha(x)$,
 $\lambda_U^{\bar\rho} :=\inf_{x\in {\Omega_{\bar\rho}}}\lambda_U(x)$, where
 $\lambda_\alpha(x)$, $\lambda_U(x)$ denote the smallest eigenvalues
 of $A_\alpha(x)$, $D^2 U(x)$, respectively. Then for each
 parameter $\alpha$,
  \begin{align*}
 {\cal L}_{ A_\alpha} U(x)&=a^{ij}_\alpha(x)\partial^2_{ij}U(x)+V(x)\cdot\nabla
 U(x)\\
 &\ge a^{ij}_\alpha(x)\partial^2_{ij}U(x)=\text{tr}\Big(A_\alpha(x)D^2U(x)\Big)
 \ge \lambda_U^{\bar\rho} \lambda_\alpha^{\bar\rho} =:\gamma_\alpha^{\bar\rho},\qquad  x\in
\Omega_{\bar\rho},
\end{align*}
i.e., $U$ is an anti-Lyapunov function with respect to $\{{\cal
L}_{A_\alpha}\}$ on $\Omega_{\bar{\rho}}$ with the essential lower
bound $\rho_m=0$.

 By \eqref{P1}, \eqref{P2} and the Taylor's expansion of $U$ at the
point $0$, it is easy to see that there is a constant
 $C_2>0$ and $\rho_* < \bar\rho$ such that
 \[
|\nabla U(x)|^2 \le C_2 U(x),\qquad x\in \bar\Omega_{\rho_*}.
 \]
Since $U$ is of the class $C^1$ and $U(x)>0$ on the compact set
$\bar\Omega_{\bar\rho}\setminus\Omega_{\rho_*}$, we can make  $C_2$
larger if necessary so that
\[
|\nabla U(x)|^2 \le C_2 U(x),\qquad x\in
\bar\Omega_{\bar\rho}\setminus\Omega_{\rho_*},
\]
i.e., there exists a constant $C_2=C_2(\bar\rho)>0$ such that
\[
|\nabla U(x)|^2 \le C_2 U(x),\qquad x\in \bar\Omega_{\bar\rho}.
\]
It follows that
 \[
a^{ij}_\alpha(x)\partial_i U(x)\partial_j U(x)\le C_2
\Lambda_\alpha\rho =: H^\alpha(\rho),\qquad\; x\in
\Omega_\rho,\;\rho\in [0,\bar\rho],
 \]
where $ \Lambda_\alpha=\Lambda_\alpha(\bar\rho):=\sup_{x\in
{\Omega_{\bar\rho}}}  |A_\alpha(x)|$.  We note that, since each
$\mu_\alpha$ is a regular stationary measure and
$\bar\Omega_{\rho_m}=\{0\}$, $\mu_\alpha(\bar\Omega_{\rho_m})=0$.
Applying Proposition \ref{aunif} with respect to the stationary
measure
 \[
 \tilde\mu_\alpha=\displaystyle\frac{\mu_\alpha|_{{\Omega_{\bar\rho}}}} {\mu_\alpha({\Omega_{\bar\rho}})}
 \]
 corresponding to ${\cal L}_{A_\alpha}$ in ${\cal W}$,  we have
 \begin{equation}\label{P4}
\mu_\alpha(\Omega_{\rho_0})=\mu_\alpha(\Omega_{\rho_0}\setminus
\bar\Omega_{\rho_m})\le \rme^{-C\int_{\rho_0}^{\bar\rho}\frac
{1}{\rho}\rmd \rho}, \qquad \rho_0\in (0,\bar\rho),
\end{equation}
where
$C=C(\bar\rho)=\frac{\lambda_U^{\bar\rho}}{C_2(\bar\rho)}\inf_{\alpha}
\frac {\lambda_\alpha^{\bar\rho}}{\Lambda_\alpha(\bar\rho)}$, which
is a  positive constant since $\cal A=\{A_\alpha\}$ is a normal null
family.

It follows from Theorem~\ref{t410} that $\cal M_{\cal
A}\neq\emptyset$, and that any ${\cal A}$-limit measure is an
invariant measure of $\varphi^t$  supported on the global attractor
$\cal J$. Let $\mu\in {\cal M}_{\cal A}$. Since $\Omega_{\rho_0}$ is
open, we have by applying Proposition \ref{wt-kh} to \eqref{P4} and
taking limit $A_\alpha\to 0$ that
 \[
\mu(\Omega_{\rho_0})\le  \rme^{-C\int_{\rho_0}^{\bar\rho}\frac
{1}{\rho}\rmd\rho }, \qquad \rho_0\in (0,\bar\rho).
\]
By letting $\rho_0\to 0$ in the above, we have
\begin{equation}\label{mu00}
\mu(\{0\})=0.
\end{equation}

 For any given $\rho\in (0,\bar\rho)$, we have by \eqref{P3} and
Proposition~\ref{UNIF-S} that $\{{\cal L}_{A_\alpha}\}$ admits a
uniform anti-Lyapunov function in   $\Omega_\rho$. It also follows
from \eqref{P3} that $\{0\}$ is the maximal repeller of $\varphi^t$
in $\Omega_\rho$. Applying Theorem~\ref{anti-L} b) and \eqref{mu00},
we have
\[
\mu(\Omega_\rho)=\mu(\Omega_\rho\setminus \{0\})=0.
\]
Since $\rho$ is arbitrary, $\mu({\Omega_{\bar\rho}})=0$. Since
$\bar\rho\in(0,\rho_M)$ is also arbitrary, $\mu(\cal W)=0$.

Since ${\rm supp}(\mu)\subset\cal J$ and  $\cal J\setminus {\cal W}$
is a positively invariant set of $\varphi^t$, it follows from
Proposition \ref{invariant-measure} that $\mu$ is actually supported
on the $\omega$-limit set ${\cal J}_*=\omega(\cal J\setminus {\cal
W})$. The $\cal A$-stability of ${\cal J}_*$ follows from
Proposition~\ref{stable}, Remark~\ref{rk2.3} 1), and
Lemma~\ref{tight} a).
\end{proof}


\begin{Corollary}\label{cor42}
Assume  $V\in C^1(\cal U,\R^n)$ and that \eqref{ode} admits a $C^2$
Lyapunov function with bounded second derivatives on $\cal U$. Let
$x_0\in\cal U$ be an equilibrium of $\varphi^t$ such that all
eigenvalues of $DV(x_0)$ have positive real parts. Then $x_0$ is a
strongly repelling equilibrium, and consequently the conclusion of
{\rm Theorem~\ref{equili}} holds for $x_0$.
\end{Corollary}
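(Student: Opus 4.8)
The plan is to reduce the claim to a direct verification of Definition~\ref{D220}: it suffices to produce an isolating neighborhood $\cal W$ of $x_0$ together with an entire weak anti-Lyapunov function associated with $\cal W$ and satisfying \eqref{P1}--\eqref{P3}, for then the conclusion of Theorem~\ref{equili} is immediate. First I would record that $x_0$ does lie in the global attractor: since \eqref{ode} admits a Lyapunov function in $\cal U$, $\varphi^t$ is dissipative (cf.\ Proposition~\ref{UNIF}), so its global attractor $\cal J$ exists and contains every compact invariant set, in particular the equilibrium $\{x_0\}$.

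Next I would build the anti-Lyapunov function from the linearization. Assume $x_0=0$ and write $B=DV(0)$, $V(x)=Bx+R(x)$ with $R(x)=o(|x|)$ as $x\to0$. Since every eigenvalue of $B$ has positive real part, $-B$ is Hurwitz, so the Lyapunov equation $B^\top P+PB=I$ has a symmetric positive definite solution $P$; set $U(x)=x^\top Px$. This $U$ is $C^2$, $U(0)=0<U(x)$ for $x\neq0$, $\nabla U(x)=2Px$ (hence $\nabla U(0)=0$ and $\nabla U(x)\neq0$ for $x\neq0$), $D^2U(0)=2P$ is positive definite, and a one-line computation using the symmetry of $P$ gives
\[
V(x)\cdot\nabla U(x)=x^\top(PB+B^\top P)x+2x^\top PR(x)=|x|^2+o(|x|^2),
\]
so there is $r>0$ with $V(x)\cdot\nabla U(x)\ge\tfrac12|x|^2>0$ whenever $0<|x|\le r$. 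In particular $\tfrac{d}{dt}U(\varphi^t x)$ is bounded below by a positive multiple of $U(\varphi^t x)$ along orbits that remain in $\{|x|\le r\}$, so $U$ is strictly increasing along every nonconstant orbit segment there, and every backward semiorbit entering that region converges to $0$.

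Then I would package $U$ into the required object: fix $\tilde\rho>0$ small enough that $\{U\le\tilde\rho\}\subset\{|x|\le r\}$ and take $\cal W=\{U<\tilde\rho\}$. Then $U$ is a $C^2$ compact function on $\cal W$ with essential lower bound $0$ and essential upper bound $\tilde\rho$; on $\cal W\setminus\{0\}$ one has $V\cdot\nabla U>0$ and $\nabla U\neq0$, so $\partial\{U<\rho\}=U^{-1}(\rho)$ for each $\rho\in(0,\tilde\rho)$; and by the strict monotonicity and backward convergence just noted, $\{0\}$ is the only compact invariant subset of $\bar{\cal W}$ (so $\cal W$ is an isolating neighborhood of $x_0$) and is the maximal repeller of $\varphi^t$ in each sublevel set $\{U<\rho\}$, $\rho\in(0,\tilde\rho)$. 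These properties are exactly \eqref{P1}--\eqref{P3}, so $x_0=0$ is a strongly repelling equilibrium. Applying Theorem~\ref{equili} with this $\cal W$ then yields the conclusion: $\cal M_{\cal A}\neq\emptyset$ for any normal null family $\cal A\subset\tilde{\cal A}$, every $\mu\in\cal M_{\cal A}$ is supported on the maximal attractor of $\varphi^t$ in $\cal J\setminus\cal W$, and with respect to any invariant normal null family that maximal attractor is $\cal A$-stable while $\{x_0\}$ is strongly $\cal A$-unstable. The only point requiring a little care is matching the construction to the precise statements \eqref{P1}--\eqref{P3} and confirming that $\{0\}$ is genuinely the maximal repeller in the small sublevel sets; both follow from the strict monotonicity of $U$ along orbits near the source, and everything else is routine.
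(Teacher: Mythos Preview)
Your proposal is correct and follows essentially the same approach as the paper: both construct the quadratic function $U(x)=(x-x_0)^\top P(x-x_0)$ where $P$ solves the Lyapunov equation $(DV(x_0))^\top P+P\,DV(x_0)=I$, and then verify \eqref{P1}--\eqref{P3} in a small neighborhood of $x_0$. The paper's proof is a two-line sketch invoking standard Lyapunov theory, while you spell out the computation $V\cdot\nabla U=|x|^2+o(|x|^2)$ and the resulting dynamical consequences; the extra dynamical checks you add (maximal repeller in sublevel sets, isolating neighborhood) are not strictly required by Definition~\ref{D220} but do no harm, and your noting that $D^2U\equiv 2P$ (not just at the origin) is what secures \eqref{P1}.
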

\begin{proof} By standard theory of dissipative dynamical systems \cite[Sections 17 and 18]{LaL}, $x_0$
is a strongly repelling equilibrium. In fact,  there is a positive
definite $n\times n$ matrix  $B$ such that
\[
{(DV(x_0))}^\top B+B(DV(x_0))=I.
\]
Let $U(x)=(x-x_0)^\top B(x-x_0)$. Then $U$ is a desired entire weak
anti-Lyapunov function satisfying \eqref{P1}-\eqref{P3} in the
vicinity of $x_0$.
\end{proof}

\begin{Corollary}\label{cor43}
Assume  $V\in C^1(\cal U,\R^n)$ and that \eqref{ode} admits a $C^2$
Lyapunov function with bounded second derivatives on $\cal U$. Then
{\rm Theorem~\ref{equili}} and {\rm Corollary~\ref{cor42}} hold for
any family of the form $\{\epsilon A\}$ when $\epsilon\to 0$, where
$A\in \tilde{\cal A}$ is such that $\sup_{x\in \cal
U} |A(x)|<\infty$ and $A(x)\to 0$ as $x\to
\partial{\cal U}$ when $\cal U\ne \R^n$.
\end{Corollary}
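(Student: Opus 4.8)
The plan is to reduce the statement to Theorem~\ref{equili} and Corollary~\ref{cor42} by verifying that, for $A$ as in the hypothesis, the one-parameter family $\cal A=\{\epsilon A\}_{0<\epsilon\le1}$ is an invariant, normal null family on $\cal U$. Once this is established, Theorem~\ref{equili} applies directly with $\cal A=\{\epsilon A\}$ and gives the assertion; and since the proof of Corollary~\ref{cor42} uses the hypotheses on $V$ and $x_0$ only to produce a positive definite matrix $B$ with $(DV(x_0))^\top B+B(DV(x_0))=I$, and hence the entire weak anti-Lyapunov function $U(x)=(x-x_0)^\top B(x-x_0)$ satisfying \eqref{P1}--\eqref{P3}, all of which is independent of the noise family, Corollary~\ref{cor42} follows as well the moment Theorem~\ref{equili} is available for $\{\epsilon A\}$. (That $\epsilon A\in\tilde{\cal A}$ for $0<\epsilon\le 1$ is clear, since $A=GG^\top/2$ forces $\epsilon A=(\sqrt\epsilon G)(\sqrt\epsilon G)^\top/2$ with $\sqrt\epsilon G$ of the same rank as $G$.)

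First I would check that $\{\epsilon A\}$ is a \emph{bounded} null family: since $A\in W^{1,2p}_{loc}(\cal U)$, we have $\epsilon A\to 0$ in $W^{1,p}$ on every compact subset of $\cal U$ as $\epsilon\to 0$, which is the topology defining a null family; and $\sup_{x\in\cal U}|A(x)|<\infty$ gives $|\epsilon A|=\epsilon\sup_{\cal U}|A|\to 0$, so the $L^\infty$-convergence on $\cal U$ required of a bounded null family also holds. Next I would verify normality. Fix a pre-compact open $\Omega\subset\cal U$. Because $A=GG^\top/2$ with ${\rm Rank}(G)\equiv n$, $A(x)$ is everywhere positive definite, and since $2p>n$ the function $A$ has a continuous representative on $\cal U$ by the Sobolev embedding, so both the smallest eigenvalue $\lambda_A(x)$ of $A(x)$ and the quantity $\Lambda_A(x)=\sqrt{\sum_{i,j}|a^{ij}(x)|^2}$ are continuous on the compact set $\bar\Omega$; hence $\lambda_A(\Omega)=\inf_{\bar\Omega}\lambda_A>0$ and $\Lambda_A(\Omega)=\sup_{\bar\Omega}\Lambda_A<\infty$. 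For $A_\epsilon=\epsilon A$ one has $\lambda_{A_\epsilon}(x)=\epsilon\lambda_A(x)$ and $\Lambda_{A_\epsilon}(x)=\epsilon\Lambda_A(x)$, so the scaling cancels in the ratio and
\[
\sup_{0<\epsilon\le 1}\frac{\Lambda_{A_\epsilon}(\Omega)}{\lambda_{A_\epsilon}(\Omega)}=\frac{\Lambda_A(\Omega)}{\lambda_A(\Omega)}<\infty ,
\]
which is exactly the normality condition \eqref{normal}.

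It remains to check invariance. If $\cal U=\R^n$, any null family is automatically invariant by Remark~\ref{well-post}~2). If $\cal U\ne\R^n$, the hypothesis $A(x)\to 0$ as $x\to\partial\cal U$ gives $\epsilon A(x)\to 0$ as $x\to\partial\cal U$ for each $\epsilon$; combined with the assumed $C^2$ Lyapunov function of \eqref{ode} in $\cal U$, Remark~\ref{well-post}~2) again yields that $\{\epsilon A\}$ is invariant. With $\{\epsilon A\}$ now known to be an invariant, normal null family, and noting that $|\epsilon A|\ll 1$ automatically for all small $\epsilon$ as required in their proofs, Theorem~\ref{equili} and Corollary~\ref{cor42} apply verbatim. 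The only mildly delicate point in the whole argument is the normality estimate above, i.e.\ confirming $\lambda_A(\Omega)>0$ on every pre-compact $\Omega$, which is where the everywhere positive-definiteness of $A\in\tilde{\cal A}$ together with the continuity furnished by the Sobolev embedding enter; everything else is a direct appeal to the cited remarks and theorems.
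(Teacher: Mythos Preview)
Your proof is correct and takes essentially the same approach as the paper: the paper's proof is the single sentence ``$\{\epsilon A\}$ is an invariant normal null family,'' and you simply fill in the verification of each of these properties before invoking Theorem~\ref{equili} and Corollary~\ref{cor42}. Your check of normality is in fact more careful than the paper's passing justification in Remark~\ref{rk2.3}~2), which assumes uniform positive-definiteness of $A$ on $\cal U$---a hypothesis incompatible with $A(x)\to 0$ at $\partial\cal U$---whereas you correctly observe that the normality condition \eqref{normal} is only required on pre-compact $\Omega\subset\cal U$, where continuity of $A$ via Sobolev embedding and pointwise positive-definiteness suffice to give $\lambda_A(\Omega)>0$.
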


\begin{proof} {\rm Theorem~\ref{equili}} and {\rm Corollary~\ref{cor42}} clearly hold because
 $\{\epsilon A\}$ is an invariant normal null family.
\end{proof}

\section{An example of stochastically stable limit cycles}

We demonstrate by an example the application of our results  in
obtaining stochastically stable limit cycles, in connection with
stochastic Hopf bifurcations.

Consider stochastic differential equations
\begin{equation}\label{Hopf}
\left\{\begin{array}{l}\rmd x=(bx-y-x(x^2+y^2))\rmd t+ \sqrt{\epsilon}
 g^{11}(x,y)\rmd W_1+ \sqrt{\epsilon} g^{12}(x,y)\rmd W_2,\\
\rmd y=(x+by-y(x^2+y^2))\rmd t+\sqrt{\epsilon}  g^{21}(x,y)\rmd W_1+
\sqrt{\epsilon} g^{22}(x,y)\rmd W_2,\end{array}\right.
\end{equation}
where $b,\epsilon$ are parameters, $g^{ij}\in W^{1,2\bar
p}_{loc}(\R^2)$, $i,j=1,2$, for some ${p}>2$, and
$G(x,y)=(g^{ij}(x,y))$ is everywhere non-singular and  bounded  on
$\R^2$. Denote $V_b(x,y)=(bx-y-x(x^2+y^2),x+by-y(x^2+y^2))^\top$
 and $A(x,y)=\frac {G(x,y) G^\top(x,y)}2$.
Also denote the adjoint Fokker-Planck operators associated with
\eqref{Hopf} by ${\cal L}_{V_b,\epsilon A}$.

Let $U(x,y)=x^2+y^2$. Then it is clear that
\begin{equation}\label{Vb}
V_b\cdot \nabla U=2U(b-U)
\end{equation}
which approaches to $ -\infty$ as $x^2+y^2\to\infty$. It follows
that, for each $b$, the unperturbed system of \eqref{Hopf} is
dissipative; in particular, it generates a (positive) semiflow
$\varphi^t_b$ on $\R^2$ which  admits a global attractor ${\cal
J}_b$. Moreover, since $A(x,y)$ is bounded, Proposition~\ref{UNIF-S}
implies that $U$ is in fact a uniform Lyapunov function in $\R^2$
with respect to the family $\{{\cal L}_{V_b,\epsilon A}\}$ when
$0<\epsilon\ll 1$.  We have by Proposition \ref{exist} that each
${\cal L}_{V_b,\epsilon A}$ admits a unique stationary measure
$\mu_{b,\epsilon A}$,  by Lemma \ref{tight} a) that the set of limit
measures of $\{\mu_{b,\epsilon A}\}$ as $\epsilon\to 0$ is
non-empty, and by Theorem~\ref{t410} that each limit measure is an
invariant  measure of $\varphi^t_b$ supported on  ${\cal J}_b$.

When $b\le 0$, ${\cal J}_b=\{0\}$. Hence $\mu_{b,\epsilon A}\to
\delta_0$ as $\epsilon\to 0$. Consequently, $\{0\}$ and $\delta_0$
are $\{\epsilon A\}$-stable.

When $b>0$, ${\cal J}_b=\bar \Omega_b$ - the closed disk of radius
$\sqrt b$ centered at the origin. We note that $U$ is also an entire
weak anti-Lyapunov function in $\Omega_b$ satisfying
\eqref{P1}-\eqref{P3} for $x_0=0$. An application of Corollary
\ref{cor43} in the open disk $\Omega_b$ yields that each limit
measure of $\{\mu_{b,\epsilon A}\}$ is supported on
$\partial\Omega_b=: C_b$ - the circle with radius $\sqrt b$, as
$\epsilon\to 0$. Since $C_b$ is a periodic invariant cycle of
$\varphi^t_b$, it is uniquely ergodic with the Haar measure $\mu_b$
as the only invariant measure. Therefore, $\mu_{b,\epsilon A} \to
\mu_b$ as $\epsilon\to 0$. Consequently, $C_b$ and $\mu_b$ become
$\{\epsilon A\}$-stable in this case.

This example thus demonstrates  an interesting  phenomenon of
stochastic Hopf bifurcation in which the repelling equilibrium
$\{0\}$  is invisible when $b>0$ under noise perturbations.

\section{Appendix}

In this  section, we will summarize some general notions and
fundamental properties of dynamics of  ordinary differential
equations including dissipation and invariant measures.  Some
materials concerning dissipation are taken from the Appendix of
\cite{JM2} for a general domain $\cal U\subset \R^n$.

Consider \eqref{ode} for a continuous $V$. Throughout the section,
we assume that solutions of \eqref{ode} are locally unique (e.g.,
$V$ is locally Lipschitz continuous). Then \eqref{ode} generates a
(continuous) {\em local flow} $\varphi^t: \cal U\to \cal U$, where
for each $\xi\in \cal U$, $\varphi^t(\xi)$ is the solution of
\eqref{ode} for $t$ lying in the maximal interval ${\cal I}_\xi$ of
existence in $\cal U$.  A set  $B\subset \cal U$ is called an {\em
invariant set} (resp. {\em positively invariant set}, resp. {\em
negatively invariant set}) of $\varphi^t$ if ${\cal I}_\xi=\R$
(resp. ${\cal I}_\xi \supset\R_+$, resp. ${\cal I}_\xi
\supset\R_{-}$) for all $\xi\in B$, and $\varphi^t(B)=B$ for all
$t\in \R$ (resp. $\varphi^t(B)\subset B$ for all $t\in \R_+$, resp.
$\varphi^t(B)\subset B$ for all $t\in \R_{-}$). If $\cal U$ itself
is an {\em invariant set} (resp. {\em positively invariant set},
resp. {\em negatively invariant set}) of $\varphi^t$, $\varphi^t$ is
called a {\em flow} (resp. {\em positive semiflow}, resp. {\em
negative semiflow}). We note that if $\varphi^t$ is a flow, then for
each $t\in \R$, $\varphi^t: \cal U\to \cal U$ is a homeomorphism.

\subsection{Dissipation v.s. anti-dissipation}
 For any
set $B\subset \mathcal{U}$ such that $\cup_{t\in\R_+}\varphi^t(B)$
(resp. $\cup_{t\in\R_-}\varphi^t(B)$) exists and is pre-compact in
$\mathcal{U}$, we recall that the {\em $\omega$-limit set of $B$}
(resp. {\em $\alpha$-limit set of $B$}) is defined as
\[
\omega(B)=\cap_{\tau\ge 0}\overline{\cup_{t\ge \tau}\varphi^t(B)}
\qquad\qquad {\rm (resp.} \; \alpha(B)=\cap_{\tau\le
0}\overline{\cup_{t\le \tau}\varphi^t(B)}{\rm )}.
\\
\\
\]

\begin{Definition} {\em Let  $\Omega\subset\cal U$ be a  connected  open set.

1) $\varphi^t$ is said to be {\em dissipative} (resp. {\em
anti-dissipative}) {\em in $\Omega$ } if $\Omega$ is a positively
(resp. negatively) invariant set of $\varphi^t$ and there exists a
compact subset $B$ of $\Omega$ with the property that for any
$\xi\in \Omega$ there exists a $t_0(\xi)>0$ such that
$\varphi^t(\xi)\in B$ as $t\ge t_0(\xi)$ (resp. $t\le -t_0(\xi)$).

2) When $\Omega$ is a positively (resp. negatively) invariant set of
$\varphi^t$, the {\em maximal attractor $\cal J$} (resp. {\em
maximal repeller $\cal R$}) {\em of $\varphi^t$ in $\Omega$} is a
compact invariant set of $\varphi^t$ which attracts (resp. repels)
any pre-compact subset $K$ of $\Omega$, i.e., $\omega(K)\subset\cal
J$ (resp. $\alpha(K)\subset\cal R$), or equivalently,
$\lim_{t\to+\infty} {\rm dist} (\varphi^t(K),\cal J)= 0$ (resp.
$\lim_{t\to -\infty} {\rm dist} (\varphi^t(K),\cal R)= 0$),  where
for any two subsets $A,B$ of $\R^n$, ${\rm dist} (A, B)$ denotes the
Hausdorff semi-distance from  $A$ to $B$.

3) Suppose that  the  maximal attractor $\cal J$   (resp. maximal
repeller $\cal R$) of $\varphi^t$ exists in $\Omega$.  If
$\Omega=\cal U$, then we call $\cal J$ (resp. $\cal R$) the {\em
global attractor} (resp. {\em global repeller}) {\em of
$\varphi^t$}. Otherwise, it is called a {\em local attractor} (resp.
{\em local repeller}) {\em of $\varphi^t$}.}
\end{Definition}

It is clear that  maximal attractor or repeller of $\varphi^t$  in
$\Omega$, if exists, must be unique. Consequently, global attractor
or repeller of $\varphi^t$  is unique.


\medskip

\begin{Proposition}\label{omega-set1} Let $\Omega\subset \cal U$ be a connected open set.

{\rm 1)} $\varphi^t$ is dissipative {\rm (}resp.
anti-dissipative{\rm )} in $\Omega$   if and only if $\Omega$ is a
positively {\rm (}resp. negatively{\rm )} invariant set and admits a
maximal attractor {\rm (}resp. repeller{\rm )} of $\varphi^t$.

{\rm 2)} If $\varphi^t$ is dissipative {\rm (}resp.
anti-dissipative{\rm )} in $\Omega$, then the maximal attractor
$\cal J$ {\rm (}resp. maximal repeller $\cal R${\rm )} of
$\varphi^t$ in $\Omega$ equals
\[
\cal J=\bigcup_{B\subset \Omega \ {\rm pre-compact}}\omega(B)\;\;\;
\hbox{
  {\rm (}resp. }  \cal R=\bigcup_{B\subset \Omega \ {\rm
pre-compact}}\alpha(B){\rm )}. \] 
\end{Proposition}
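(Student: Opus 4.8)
The plan is to prove (1) in both directions and then deduce (2) from the uniqueness of the maximal attractor noted just above. I only treat the dissipative/attractor case; the anti-dissipative/repeller case is obtained by reversing time, i.e.\ by applying the dissipative case to the local flow $\psi^t:=\varphi^{-t}$ generated by $\dot x=-V(x)$, under which ``negatively invariant'' becomes ``positively invariant'', the $\alpha$-limit set of a set becomes its $\omega$-limit set, and a maximal repeller of $\varphi^t$ in $\Omega$ becomes a maximal attractor of $\psi^t$ in $\Omega$. The ``if'' direction of (1) is easy: if $\Omega$ is positively invariant and carries a maximal attractor $\cal J$, then, $\cal J$ being compact and $\R^n$ locally compact, one can pick a compact $B$ with $\cal J\subset{\rm int}\,B\subset B\subset\Omega$; for each $\xi\in\Omega$ the singleton $\{\xi\}$ is pre-compact, so ${\rm dist}(\varphi^t(\xi),\cal J)\to0$, hence $\varphi^t(\xi)\in{\rm int}\,B\subset B$ for all $t$ past some $t_0(\xi)>0$, which is exactly the dissipativity of $\varphi^t$ in $\Omega$.

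For the ``only if'' direction of (1), let $B\subset\Omega$ be a compact set as in the definition of dissipativity. First I would replace $B$ by $E:=\bigcap_{t\ge0}(\varphi^t)^{-1}(B)$, the preimages taken inside the positively invariant $\Omega$ on which $\varphi^t$ is defined for $t\ge0$. Then $E$ is a closed subset of the compact set $B$, hence compact; it is positively invariant; and it is still absorbing, since for any $\xi\in\Omega$ the point $\varphi^{t_0(\xi)}(\xi)$ lies in $E$ (all its forward iterates stay in $B$), whence $\varphi^t(\xi)\in E$ for all $t\ge t_0(\xi)$ by positive invariance. Positive invariance also gives $\bigcup_{t\ge0}\varphi^t(E)=E$, which is pre-compact in $\Omega$, so $\cal J:=\omega(E)$ is well defined; by the standard properties of $\omega$-limit sets of pre-compact positively invariant sets, $\cal J$ is nonempty, compact, invariant and contained in $E\subset\Omega$, and it attracts $E$, i.e.\ ${\rm dist}(\varphi^t(E),\cal J)\to0$ (otherwise a sequence $\varphi^{t_n}(x_n)$ with $x_n\in E$, $t_n\to\infty$, staying bounded away from $\cal J$ would, by compactness of $E$, subconverge to a point of $\omega(E)=\cal J$). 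It then remains only to show that $\cal J$ attracts an arbitrary pre-compact $K\subset\Omega$, and this follows once one knows that $\varphi^{T}(K)\subset E$ for some $T=T(K)$, because then ${\rm dist}(\varphi^t(K),\cal J)\le{\rm dist}(\varphi^{t-T}(E),\cal J)\to0$. This makes $\cal J$ a maximal attractor of $\varphi^t$ in $\Omega$, proving (1). For (2), since $\cal J$ is the unique maximal attractor and attracts every pre-compact set, $\omega(B')\subset\cal J$ for every pre-compact $B'\subset\Omega$; and $\cal J$ is itself pre-compact and invariant, so $\omega(\cal J)=\cal J$, giving $\cal J\subset\bigcup_{B'}\omega(B')$. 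Hence $\cal J=\bigcup_{B'\subset\Omega\ {\rm pre\text{-}compact}}\omega(B')$, which is (2).

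The one genuinely delicate point is the uniform absorption step ``$\varphi^{T}(K)\subset E$ for some $T$'', equivalently $\sup_{\xi\in K}t_0(\xi)<\infty$, equivalently that $\bigcup_{t\ge0}\varphi^t(K)$ is pre-compact in $\Omega$: pointwise absorption does not force this by purely soft arguments, since the first-entry time into $B$ need not be upper semicontinuous when $B$ has empty interior, and a compact set need not be covered by a single member of an increasing family of closed sets. I would handle it by the standard device of thickening $B$ to a compact set whose interior still absorbs every point, and then using that $\varphi^t$, being continuous and injective (by uniqueness of solutions) on the locally compact set $\Omega$, cannot let an orbit of the compact set $K$ leave every compact subset of $\Omega$ before it is trapped; equivalently, this is precisely the classical assertion that a point-dissipative flow possessing a compact absorbing set admits a global attractor, and it may simply be quoted from the standard theory of dissipative dynamical systems (cf.\ the references on dissipation cited in the paper).
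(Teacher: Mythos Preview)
Your outline is correct, and in fact considerably more detailed than what the paper does: the paper's own ``proof'' of this proposition is simply a citation (``See Proposition~6.2 in \cite{JM2}''), with no argument given in the present paper at all.

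Your construction of the positively invariant compact absorbing set $E=\bigcap_{t\ge0}(\varphi^t)^{-1}(B)$ is clean, and the identification of $\mathcal J=\omega(E)$ together with its basic properties is standard and correctly invoked. You are also right to single out the uniform absorption step $\varphi^T(K)\subset E$ as the only genuinely nontrivial point: passing from pointwise absorption to uniform absorption of compact sets is exactly the content of the classical ``point-dissipative implies existence of a global attractor'' theorem for flows on locally compact spaces (e.g.\ Hale \cite{Hale}), and deferring it to that literature is appropriate---indeed, it is precisely what the paper itself does by citing \cite{JM2}. Your derivation of (2) from uniqueness of the maximal attractor plus $\omega(\mathcal J)=\mathcal J$ is correct and efficient.
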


\begin{proof} See Proposition~6.2 in \cite{JM2}.
\end{proof}

\medskip

\begin{Definition}\label{D2} {\em Let $\Omega\subset \cal U$ be a  connected open
set and  $U\in C^1(\Omega)$ be a compact function with essential
upper bound $\rho_M$. For each $\rho\in  [0,\rho_M)$,
 denote $\Omega_\rho$
 as the $\rho$-sublevel set of $U$.
\begin{itemize}

\item[{\rm 1)}] $U$ is a
 {\em weak Lyapunov function} (resp. {\em weak anti-Lyapunov
function})  {\em of \eqref{ode}  in $\Omega$} if
\begin{equation}\label{U33}
V(x)\cdot \nabla U(x)\leq 0 \quad\quad \hbox{(resp.}\, \ge 0{\rm
)},\;\;\, x\in \tilde\Omega=\Omega\setminus \bar\Omega_{\rho_m},
\end{equation}
  where $\rho_m\in (0,\rho_M)$ is a constant, called
{\em essential lower bound of $U$}, and $\tilde{\Omega}$ is called
{\em essential domain of  $U$ in $\Omega$}.

If \eqref{U33} holds for all $x\in \Omega$ instead of
$\tilde\Omega$, then
 $U$ is called an {\em
entire weak Lyapunov function {\rm(resp.}  entire weak anti-Lyapunov
function{\rm)} of \eqref{ode}  in $\Omega$}.

\item[{\rm 2)}]  $U$ is  a {\em Lyapunov function} (resp. {\em
anti-Lyapunov function}) {\em of \eqref{ode}  in $\Omega$} if there
exists a constant $\gamma>0$, called a {\em Lyapunov constant of
$U$},  such that
\begin{equation}\label{U3c}
V(x)\cdot \nabla U(x)\leq -\gamma\quad\quad \, \hbox{(resp.}\, \ge
\gamma{\rm )},\;\;\,  x\in \tilde\Omega=\Omega\setminus
\bar\Omega_{\rho_m},
\end{equation}
 where  $\rho_m\in (0,\rho_M)$  is a constant,  called  {\em
essential lower bound of $U$}, and $\tilde{\Omega}$ is called {\em
essential domain of $U$ in $\Omega$}.

\end{itemize}
}
\end{Definition}
\medskip

\medskip

\begin{Proposition}\label{dissip}  Let $\Omega\subset \cal U$
be a connected open set.

{\rm a)} If \eqref{ode} admits a weak Lyapunov  {\rm (}resp.
anti-Lyapunov{\rm )} function in $\Omega$, then $\Omega$ must be a
 positively {\rm (}resp. negatively{\rm )}
invariant set of $\varphi^t$.

{\rm b)}  If \eqref{ode} admits  a Lyapunov  {\rm (}resp.
anti-Lyapunov{\rm )} function in $\Omega$, then it must be
 dissipative {\rm (}resp. anti-dissipative{\rm )} in $\Omega$, with
the  maximal
 attractor {\rm (}resp. repeller{\rm )} in $\Omega$ being
 $\omega(\Omega_{\rho_m})$ {\rm (}resp. $\alpha(\Omega_{\rho_m})${\rm
 )}, where $\rho_m$ is the essential lower bound and $\Omega_{\rho_m}$ is the $\rho_m$-sublevel set
 of the  Lyapunov
 {\rm (}resp. anti-Lyapunov{\rm )} function.
\end{Proposition}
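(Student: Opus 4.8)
The plan is to exploit the monotonicity of a (weak) Lyapunov function along orbits together with the fact that, since $U$ is a compact function with essential upper bound $\rho_M$, every sublevel set $\bar\Omega_\rho$ with $\rho<\rho_M$ is a compact subset of $\Omega$ (hence of $\cal U$).

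For part a), fix $\xi\in\Omega$ and choose $\rho_0\in(\max\{\rho_m,U(\xi)\},\rho_M)$. I would show $\varphi^t(\xi)\in\bar\Omega_{\rho_0}$ for every $t\in{\cal I}_\xi\cap\R_+$. Let $T$ be the supremum of those $t$ for which $U(\varphi^s(\xi))\le\rho_0$ on $[0,t]$. If $T<\sup{\cal I}_\xi^+$, then $U(\varphi^T(\xi))=\rho_0>\rho_m$, so $\varphi^T(\xi)$ lies in the open essential domain $\tilde\Omega=\Omega\setminus\bar\Omega_{\rho_m}$, whence $\varphi^t(\xi)\in\tilde\Omega$ for $t$ in a right neighbourhood of $T$; on that neighbourhood $\frac{d}{dt}U(\varphi^t(\xi))=V(\varphi^t(\xi))\cdot\nabla U(\varphi^t(\xi))\le0$, so $U(\varphi^t(\xi))\le\rho_0$ there, contradicting the definition of $T$. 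Hence the orbit stays in the compact set $\bar\Omega_{\rho_0}\subset\cal U$, and by the standard continuation principle for ordinary differential equations (valid under local uniqueness) ${\cal I}_\xi^+=\R_+$; since $\bar\Omega_{\rho_0}\subset\Omega$ this also gives $\varphi^t(\xi)\in\Omega$ for all $t\ge0$, i.e. $\Omega$ is positively invariant. The anti-Lyapunov case is identical after the time reversal $t\mapsto-t$, which turns $V$ into $-V$ and a weak anti-Lyapunov function into a weak Lyapunov function.

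For part b), note first that a Lyapunov function is in particular a weak one, so part a) shows $\Omega$ is positively invariant. Pick any $\rho_0\in(\rho_m,\rho_M)$ and set $B=\bar\Omega_{\rho_0}$, a compact subset of $\Omega$. For $\xi\in\Omega$, as long as $U(\varphi^t(\xi))>\rho_m$ one has $\frac{d}{dt}U(\varphi^t(\xi))\le-\gamma$, so $U$ decreases at the uniform rate $\gamma$; hence after a time at most $(U(\xi)-\rho_0)^+/\gamma$ the orbit reaches $\{U\le\rho_0\}$, and once there the argument of part a) keeps it in $B$ forever. Thus $\varphi^t$ is dissipative in $\Omega$, and Proposition~\ref{omega-set1} applies: the maximal attractor $\cal J$ exists and $\cal J=\bigcup\{\omega(K):K\subset\Omega\ \text{pre-compact}\}$; in particular $\bigcup_{t\ge0}\varphi^t(\Omega_{\rho_m})\subset B$ (by the part a) argument, since $\rho_m<\rho_0$), so $\omega(\Omega_{\rho_m})$ is well defined and $\omega(\Omega_{\rho_m})\subset\cal J$.

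It remains to prove $\cal J\subset\omega(\Omega_{\rho_m})$, which is the delicate point. I would first show $\cal J\subset\Omega_{\rho_m}$. Because $V\cdot\nabla U\le-\gamma$ on the open set $\tilde\Omega=\{U>\rho_m\}\cap\Omega$ and $V,\nabla U$ are continuous, this inequality persists on $\{x\in\Omega:U(x)\ge\rho_m\}$. If some $y\in\cal J$ had $U(y)\ge\rho_m$, then, using that $\cal J$ is compact and invariant, the backward orbit $s\mapsto\varphi^{-s}(y)$ stays in $\cal J$ and satisfies $\frac{d}{ds}U(\varphi^{-s}(y))=-(V\cdot\nabla U)(\varphi^{-s}(y))\ge\gamma$ as long as $U(\varphi^{-s}(y))\ge\rho_m$; a bootstrap argument then forces $U(\varphi^{-s}(y))\ge\rho_m+\gamma s$ for all $s\ge0$, contradicting the boundedness of $U$ on the compact set $\cal J$. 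Hence $\cal J\subset\Omega_{\rho_m}$; since $\cal J$ is compact and invariant, $\cal J=\omega(\cal J)\subset\omega(\Omega_{\rho_m})$ by monotonicity of the $\omega$-limit operation, and together with the previous inclusion this gives $\cal J=\omega(\Omega_{\rho_m})$. The anti-dissipative statement again follows by time reversal. The main obstacle is exactly this last identification — ruling out that $\cal J$ touches the level set $U^{-1}(\rho_m)$ — and the key ingredients for it are the continuity extension of the Lyapunov inequality to the closure of $\tilde\Omega$ together with the strict ``no return'' monotonicity in backward time.
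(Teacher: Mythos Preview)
The paper does not actually prove this proposition; it simply cites Proposition~6.3 of \cite{JM2}. So there is no in-paper argument to compare against, and your direct proof is a genuine addition. The overall strategy you propose---trapping orbits in compact sublevel sets via monotonicity of $U$, invoking the continuation principle, and then using backward-time growth of $U$ on $\cal J$ to force $\cal J$ down into the bottom sublevel set---is the standard and correct one.

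There is one technical soft spot worth tightening. You assert that the Lyapunov inequality $V\cdot\nabla U\le-\gamma$, which holds on $\tilde\Omega=\Omega\setminus\bar\Omega_{\rho_m}$, extends by continuity to all of $\{x\in\Omega:U(x)\ge\rho_m\}$, and then use this to conclude $\cal J\subset\Omega_{\rho_m}$ (strict sublevel set). Neither step is quite right as stated: the closure of $\tilde\Omega$ need not be all of $\{U\ge\rho_m\}$ (points on $U^{-1}(\rho_m)$ that are local maxima of $U$ may fail to be limits of $\{U>\rho_m\}$), and $\cal J$ may genuinely meet $U^{-1}(\rho_m)$ (e.g.\ at an equilibrium where $\nabla U$ vanishes). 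The clean fix is to argue only that $\max_{\cal J}U\le\rho_m$: if $y^*\in\cal J$ realises the maximum and $U(y^*)>\rho_m$, then $y^*\in\{U>\rho_m\}\subset\tilde\Omega$, and your backward-orbit derivative computation at $s=0$ already contradicts maximality. Hence $\cal J\subset\{U\le\rho_m\}$. One then checks $\{U\le\rho_m\}=\bar\Omega_{\rho_m}$ (any point with $U=\rho_m$ lying outside $\bar\Omega_{\rho_m}$ would belong to $\tilde\Omega$, forcing $\nabla U\ne0$ there and hence membership in $\bar\Omega_{\rho_m}$, a contradiction), so $\cal J\subset\bar\Omega_{\rho_m}$. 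Finally $\omega(\bar\Omega_{\rho_m})=\omega(\Omega_{\rho_m})$ by continuity of the flow, and $\cal J=\omega(\cal J)\subset\omega(\bar\Omega_{\rho_m})=\omega(\Omega_{\rho_m})$, completing the identification. With this adjustment your argument is complete.
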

\begin{proof}
See Proposition~6.3 in \cite{JM2}.
\end{proof}

\medskip

The following is a strong version of the well-known LaSalle
invariance principle (\cite[Chapter 2, Theorem 6.4]{La}) for
locating $\omega$-limit sets
in a domain which admits an entire weak Lyapunov
function.
\medskip

\begin{Proposition}\label{LaSalle} Let $\Omega\subset\cal U$ be a
connected open set. If \eqref{ode} admits  an entire
 weak Lyapunov
 function $U$ in $\Omega$, then for
any $x_0\in \Omega$,
\[
\omega(x_0)\subset S,
\]
 where $S=\{x\in  \Omega: V(x)\cdot \nabla U(x)=0\}$.
\end{Proposition}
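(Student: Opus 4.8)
The plan is to run the classical LaSalle argument, the only adaptation being to use the compact-function property of $U$ to keep trajectories inside $\Omega$. Fix $x_0\in\Omega$ and set $g(t):=U(\varphi^t(x_0))$. Since $U\in C^1(\Omega)$ and $t\mapsto\varphi^t(x_0)$ is a $C^1$ solution of \eqref{ode}, the composite $g$ is differentiable with $g'(t)=\nabla U(\varphi^t(x_0))\cdot V(\varphi^t(x_0))\le 0$ by the entire weak Lyapunov hypothesis; hence $g$ is non-increasing and $U(\varphi^t(x_0))\le U(x_0)=:\rho_0<\rho_M$ throughout the forward maximal interval of existence.

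First I would verify that the forward orbit $\{\varphi^t(x_0):t\ge 0\}$ is pre-compact in $\Omega$. Because $U$ is a compact function, $U\to\rho_M$ as $x\to\partial\Omega$, so the sublevel set $\{x\in\Omega:U(x)\le\rho_0\}$ has closure disjoint from $\partial\Omega$ in the unified $\E^n$-topology and is therefore a compact subset of $\Omega$; the bound $U(\varphi^t(x_0))\le\rho_0$ then confines the forward orbit to this compact set, so ${\mathcal I}_{x_0}\supset\R_+$ and $\omega(x_0)$ is a non-empty, compact, invariant subset of $\Omega$. (Alternatively, this is exactly Proposition~\ref{dissip} a) applied to $U$ regarded as a weak Lyapunov function.) Since $g$ is non-increasing and bounded below by $0$, the limit $c:=\lim_{t\to\infty}U(\varphi^t(x_0))$ exists.

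Next, for any $y\in\omega(x_0)$ choose $t_n\to\infty$ with $\varphi^{t_n}(x_0)\to y$; then $U(y)=\lim_n U(\varphi^{t_n}(x_0))=c$ by continuity of $U$, so $U\equiv c$ on $\omega(x_0)$. Finally I would invoke invariance of $\omega(x_0)$: for fixed $y\in\omega(x_0)$ the whole orbit $\{\varphi^t(y):t\in\R\}$ lies in $\omega(x_0)$, so $t\mapsto U(\varphi^t(y))\equiv c$ is constant, and differentiating at $t=0$ gives $V(y)\cdot\nabla U(y)=\frac{d}{dt}\big|_{t=0}U(\varphi^t(y))=0$, i.e.\ $y\in S$. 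As $y\in\omega(x_0)$ was arbitrary, $\omega(x_0)\subset S$.

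I do not expect a serious obstacle; the argument is routine once the framework of the preliminaries is in place. The one point deserving a moment's care is the pre-compactness of the forward orbit, which relies precisely on combining the monotonicity of $U$ along trajectories with the defining property of a compact function (that $U\to\rho_M$ at $\partial\Omega$), so that a trajectory starting below level $\rho_M$ can never escape to the boundary of $\Omega$.
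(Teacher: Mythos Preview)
Your argument is correct and is exactly the classical LaSalle invariance principle, with the compact-function hypothesis supplying the pre-compactness of forward orbits that the original version must assume separately. Note, however, that the paper does not actually give its own proof of this proposition: it is stated in the Appendix as a known result and attributed to \cite[Chapter~2, Theorem~6.4]{La}, with the remark immediately following that the compactness of $U$ is precisely what upgrades the conclusion from ``points with bounded positive orbit'' to all of $\Omega$---which is exactly the point you singled out as the one step needing care.
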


We note that the original LaSalle invariance principle does not
require $U$ be  a compact function (in this case, the conclusion of
Proposition~\ref{LaSalle} only holds for  points which have bounded
 positive orbits).
\medskip

\begin{Definition}\label{D22} {\em   A compact invariant
set $\cal J\subset \cal U$ (resp.  $\cal R\subset \cal U$) of
$\varphi^t$ is called a {\em strong  local attractor} (resp. {\em
strong local repeller}) of $\varphi^t$ if there is a  connected,
positively {\rm (}resp. negatively{\rm )} invariant neighborhood
$\cal W$ of $\cal J$ (resp. $\cal R$) in $\cal U$, called an {\em
isolating neighborhood}, with oriented $C^2$ boundary such that
\begin{itemize}
\item[{\rm 1)}] $\cal J$ attracts $\cal W$ (resp. $\cal R$ repels $\cal W$);
\item[{\rm 2)}] $V(x)\cdot \nu(x)<0$
(resp. $V(x)\cdot \nu(x)>0$), $x\in\partial{\cal W}$,  where
$\nu(x)$ denotes the outward unit normal vector of $\partial{\cal
W}$ at $x\in
\partial{\cal W}$.
\end{itemize}}
\end{Definition}

\begin{Remark}\label{R32} {\em
The neighborhood $\cal W$ in Definition~\ref{D22} is indeed an
isolating neighborhood in the usual sense, i.e.,  any entire orbits
of $\varphi^t$ lying in $\cal W$ must lie in $\cal J$ (resp. $\cal
R$). }
\end{Remark}
\medskip

\begin{Proposition}~\label{attractor-repeller} A local attractor {\rm (}resp.  repeller{\rm )} of $\varphi^t$ is a
 strong local attractor
  {\rm (}resp.  repeller $\cal R${\rm )}  if and only if it has an isolating  neighborhood  on which   \eqref{ode} admits
  a
Lyapunov  {\rm (}resp. anti-Lyapunov{\rm )} function.
\end{Proposition}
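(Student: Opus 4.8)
The plan is to prove the two implications separately, reducing the repeller case to the attractor case by the substitution $V\mapsto -V$, $\varphi^t\mapsto\varphi^{-t}$: a strong local repeller of $\varphi^t$ is a strong local attractor of the flow generated by $\dot x=-V(x)$, and an anti-Lyapunov function of \eqref{ode} in a set is exactly a Lyapunov function of $\dot x=-V(x)$ there. So it suffices to treat the attractor statement.

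\emph{Sufficiency.} Suppose the local attractor $\cal J$ has an isolating neighborhood $\cal W$ on which \eqref{ode} admits a Lyapunov function $U$, with essential bounds $\rho_m<\rho_M$, essential domain $\tilde{\cal W}=\cal W\setminus\bar{\cal W}_{\rho_m}$ and Lyapunov constant $\gamma$. By Proposition~\ref{dissip}, $\cal W$ is positively invariant and $\varphi^t$ is dissipative in $\cal W$ with maximal attractor $\omega(\cal W_{\rho_m})$; since $\cal W$ isolates $\cal J$ (so $\cal J$ is the largest invariant set in $\cal W$) and $\omega(\cal J)=\cal J$, Proposition~\ref{omega-set1} identifies this maximal attractor with $\cal J$. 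The point now is that on $\tilde{\cal W}=\{U>\rho_m\}$ one has $V\cdot\nabla U\le-\gamma<0$, hence $\nabla U\neq 0$ there; therefore every $\rho\in(\rho_m,\rho_M)$ is a regular value of $U$, so by the implicit function theorem $\partial\cal W_\rho=U^{-1}(\rho)$ is an embedded, oriented $C^2$ hypersurface with outward unit normal $\nu=\nabla U/|\nabla U|$, along which $V\cdot\nu\le-\gamma/|\nabla U|<0$. Moreover $\bar{\cal W}_\rho$ is a compact subset of $\cal W$ ($U$ being a compact function), $\cal W_\rho$ is positively invariant because $\tfrac{d}{dt}U(\varphi^t(x))=V\cdot\nabla U<0$ on $\partial\cal W_\rho$, and for $\rho$ close to $\rho_M$ we have $\cal J\subset\cal W_\rho$ while $\cal J=\omega(\cal W_{\rho_m})$ attracts the precompact set $\cal W_\rho$. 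Hence $\cal W_\rho$ (replacing it, if necessary, by the union of its components meeting $\cal J$, with connectedness inherited from $\cal W$ for $\rho$ near $\rho_M$) is a connected, positively invariant neighborhood of $\cal J$ with oriented $C^2$ boundary satisfying 1), 2) of Definition~\ref{D22}; thus $\cal J$ is a strong local attractor.

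\emph{Necessity.} Suppose $\cal J$ is a strong local attractor with isolating neighborhood $\cal W$ as in Definition~\ref{D22}: precompact, connected, positively invariant, with oriented $C^2$ boundary and $V\cdot\nu<0$ on $\partial\cal W$. I would produce a $C^2$ Lyapunov function of \eqref{ode} on $\cal W$ itself. Let $\psi$ be the signed distance to $\partial\cal W$, negative inside $\cal W$; since $\partial\cal W$ is a compact $C^2$ hypersurface, $\psi$ is $C^2$ on a collar $\{-\delta_0<\psi\le 0\}$ with $\nabla\psi$ nonvanishing and $\nabla\psi=\nu$ on $\partial\cal W$. By compactness and continuity, after shrinking $\delta_0$ there is $c>0$ with $V\cdot\nabla\psi\le-c$ on $\{-\delta_0\le\psi\le 0\}$; and since $\cal J$ is invariant while the backward flow increases $\psi$ at rate $\ge c$ on the collar, $\cal J$ lies at distance $\ge\delta_0$ from $\partial\cal W$. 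Now fix $0<\delta<\delta_0$ and a $C^\infty$ nondecreasing $f:(-\infty,0)\to[1,\rho_M)$, with $f\equiv 1$ on $(-\infty,-\delta]$ (with flat contact at $-\delta$), $f(s)\to\rho_M$ as $s\to 0^-$, $f'>0$ on $(-\delta,0)$, and $f'\ge\gamma_1>0$ on $[-\delta/2,0)$. Set $U:=f\circ\psi$ on $\{-\delta<\psi<0\}$ and $U\equiv 1$ elsewhere on $\cal W$; these pieces agree to second order across $\{\psi=-\delta\}$, so $U\in C^2(\cal W)$, $U$ is a compact function on $\cal W$ with $\sup U=\rho_M$, and with essential lower bound $\rho_m:=f(-\delta/2)\in(1,\rho_M)$ the set $\bar{\cal W}_{\rho_m}$ is compact in $\cal W$ and contains $\cal J$. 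Finally, on $\tilde{\cal W}=\{U>\rho_m\}=\{-\delta/2<\psi<0\}$ one has $V\cdot\nabla U=f'(\psi)\,(V\cdot\nabla\psi)\le -c\,\gamma_1=:\gamma<0$, so $U$ is the required Lyapunov function.

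\emph{Main obstacle.} The delicate point is the necessity direction: one must join the collar function $f\circ\psi$ to a constant on the interior within the $C^2$ category while still keeping the strict drift inequality $V\cdot\nabla U\le-\gamma$ on the \emph{whole} essential domain $\tilde{\cal W}$. Since $C^2$-matching forces $f'$ (hence $\nabla U$, hence $V\cdot\nabla U$) to vanish near the inner edge of the transition layer, the inequality cannot hold throughout $\{\psi>-\delta\}$; the resolution — and the reason Definition~\ref{D2} only demands the inequality off a sublevel set — is to take the essential lower bound $\rho_m$ strictly above the level at which $f'$ is permitted to degenerate. The auxiliary facts (the signed distance to a compact $C^2$ hypersurface is $C^2$ on a collar; an invariant set inside a positively invariant region with inward boundary flow stays away from the boundary) are routine.
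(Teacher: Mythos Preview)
Your proof is correct and follows the same underlying strategy as the paper: for necessity, build the Lyapunov function from a defining function of the $C^2$ boundary $\partial{\cal W}$; for sufficiency, use that $V\cdot\nabla U\le-\gamma$ forces $\nabla U\neq 0$ near the boundary, so level sets give the required $C^2$ hypersurface with inward drift.

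The one genuine difference worth noting is in the sufficiency direction. The paper argues by extending $U$ (via $C^2$ approximation) to a slightly larger open set $\tilde{\cal W}\supset\cal W$ and then asserting that condition 2) of Definition~\ref{D22} holds on $\partial\cal W$ itself. You instead pass to a sublevel set $\cal W_\rho$ with $\rho\in(\rho_m,\rho_M)$ and verify directly that its boundary $U^{-1}(\rho)$ is an oriented $C^2$ hypersurface with $V\cdot\nu<0$. Your route is cleaner: it avoids the issue that the original $\partial\cal W$ is not assumed $C^2$ and that a compact function with finite $\rho_M$ does not obviously extend as a $C^2$ Lyapunov function across $\partial\cal W$. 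The cost is the small connectedness check you flag parenthetically; the paper's argument is terser but leans harder on the reader. For necessity, your signed-distance construction with the cutoff $f$ is exactly a fleshed-out version of the paper's ``parametrize $\partial\cal W$ by a $C^2$ function $U$'' followed by continuity near the boundary; your discussion of the $C^2$ matching and the placement of $\rho_m$ makes explicit what the paper leaves implicit.
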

\begin{proof} We only treat the case of a local attractor $\cal J$ of $\varphi^t$.

To show the necessity, we parametrize the boundary $\partial{\cal
W}$ of an isolating neighborhood $\cal W$ in Definition~\ref{D22} by
a $C^2$ function $U$, i.e., $\partial{\cal W}=\{x\in \cal U:
U(x)={\rm constant}\}$. Let $\gamma=\min_{x\in
\partial{\cal W}} |V(x)\cdot \nu(x)|$. Then $\gamma>0$.  Since
$\nu(x)=\nabla U(x)/|\nabla U(x)|$,
\[V(x)\cdot \nabla U(x)\le -\gamma|\nabla U(x)|\le -\gamma \min_{x\in
\partial {\cal W}}|\nabla U(x)|=: -\tilde \gamma
\]
for all $x\in
\partial {\cal W}$. It follows that there is a  neighborhood
$\tilde{\cal W}$ of $\partial{\cal W}$ in $\cal W$ such that
\[V(x)\cdot \nabla U(x)<-\frac{\tilde\gamma}2, \qquad \quad x\in \tilde{\cal W},
\]
i.e.,   $U$ becomes a Lyapunov function of \eqref{ode} in $\cal W$.

To prove the sufficiency, we let $\cal W$ be an isolating
neighborhood of  $\cal J$ on which a Lyapunov function $U$ of
\eqref{ode} exists. Using $C^2$ approximation, we may assume without
loss of generality that $U$ is of the class $ C^2$ in a connected
open set $\tilde{\cal  W}\supset {\cal W}$ on which $U$ remains as a
Lypunov function of $\varphi^t$. Then it is clear that
Definition~\ref{D22} 2) is satisfied on $\partial{\cal W}$.
\end{proof}
\medskip

\begin{Definition}~\label{D220} {\em An equilibrium  $x_0$ of \eqref{ode} is called a (local) {\em strongly
repelling equilibrium}  if there is a connected open set $\cal W$
containing $x_0$, called an {\em isolating neighborhood of $x_0$},
and a compact function $U\in C^2(\cal W)$ such that
\begin{eqnarray}
&& \hbox{the Hessian matrix } D^2U\; \hbox{ is everywhere positive definite in} \, \cal W,\label{P1}\\
&& U(x_0) =0,\quad \nabla U(x_0)=0; \;\quad U(x) > 0, \,\;  x\in
\cal W\setminus
\{x_0\},\,\; \hbox{and}\label{P2}\\
&&V(x)\cdot\nabla U(x)>0,\,\;  x\in \cal W\setminus
\{x_0\}.\label{P3}
\end{eqnarray}
} \end{Definition}
\medskip

\begin{Remark} {\em 1) We note that a strongly repelling equilibrium  $x_0$ of \eqref{ode} is necessarily a strong
local repeller.

2) By a time reversing application of Converse Lyapunov Theorem (see
e.g., \cite[Theorem 4.2.1]{LWY}, \cite[Theorem V.2.12]{BS}), if
$x_0$ is a (local) repelling equilibrium in the usual sense, then
there is a Lipschitz continuous function $U$ in a neighborhood of
$x_0$ which is an almost everywhere entire weak anti-Lyapunov
function and satisfies \eqref{P2}, \eqref{P3} almost everywhere.}
\end{Remark}

\medskip

\subsection{Invariant measures}  For a Borel set $\Omega\subset\R^n$, we denote
by $M(\Omega)$ the set of Borel probability measures on $\Omega$
furnished with the weak$^*$-topology.

For an invariant (resp. positively invariant, negatively invariant)
Borel set $\Omega\subset\cal U$ of $\varphi^t$, $\mu\in M(\Omega)$
is said to be an {\em invariant measure} (resp. {\em positively
invariant measure, negatively invariant measure}) of $\varphi^t$ on
$\Omega$  if for any Borel set $B \subset \Omega$,
\begin{equation}\label{invariant}
\mu \left( \varphi^{-t} (B) \right) = \mu (B),\qquad\qquad  t \in
\R\;\; {\rm (resp.}\; t\in\R_+,\;t\in \R_{-}{\rm )},
\end{equation}
where $\varphi^{-t}(B)=\{x\in \Omega: \varphi^t(x)\in B\}$.

 We note that if $\Omega$ is an invariant Borel set, then it
follows from the flow property that any positively or negatively
invariant measure on $\Omega$ must be invariant.


An invariant  measure $\mu$ is an {\em ergodic measure} if any
invariant Borel set has either full $\mu$-measure or zero
$\mu$-measure. A compact invariant set admits at least one ergodic
measure. It is said to be {\em uniquely ergodic} if it admits only
one invariant
 measure.
\medskip

\begin{Proposition}\label{invariant-measure} Let $\Omega\subset\cal U$ be a
 positively {\rm (}resp. negatively{\rm )} invariant Borel
set in $\mathcal{U}$ which admits a positively {\rm (}resp.
negatively{\rm )} invariant  measure $\mu$ of $\varphi^t$. Then the
following holds.
\begin{itemize}
\item[{\rm 1)}] If $\Omega$ is pre-compact in $\cal U$, then $\mu$ is
an invariant measure of $\varphi^t$ supported on
$\omega(\Omega)$ {\rm (}resp. $\alpha(\Omega)${\rm )}.

\item[{\rm 2)}]  If $\Omega$ is a connected open subset of $\cal U$ and $\varphi^t$ is
dissipative {\rm (}resp. anti-dissipative{\rm )}  in
$\Omega$, then $\mu$ is supported on the maximal attractor {\rm
(}resp. repeller{\rm )} of $\varphi^t$ in $\Omega$.
\end{itemize}
\end{Proposition}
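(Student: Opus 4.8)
The plan is to handle both parts by the same mechanism: positive invariance of $\mu$ says exactly that $(\varphi^t)_*\mu=\mu$ for every $t\ge 0$, i.e. $\mu(\varphi^{-t}(B))=\mu(B)$ for all Borel $B\subset\Omega$ and $t\ge 0$. Hence, if for some small (open) neighborhood $N$ of the relevant limit set there is a $\mu$‑full subset of $\Omega$ that lands inside $N$ under $\varphi^t$ for all large $t$, then $N\supset\varphi^{-t}(\text{that subset})$ already has full $\mu$‑measure, i.e. $\mu(N)=1$; shrinking $N$ down to the (closed) limit set forces $\mu$ to be concentrated there. Once the support of $\mu$ is pinned to that limit set — which is a genuine invariant set on which each $\varphi^t$ is a homeomorphism with $\varphi^{-t}\circ\varphi^{t}=\mathrm{id}$ — applying $(\varphi^{-t})_*$ to $(\varphi^{t})_*\mu=\mu$ upgrades positive invariance to full invariance, giving $\mu(\varphi^{-t}(B))=\mu(B)$ for all $t\in\R$.

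For part 1), since $\Omega$ is pre-compact in $\cal U$ and positively invariant, $\overline\Omega$ is a compact subset of $\cal U$ and $\cup_{t\ge 0}\varphi^t(\Omega)=\Omega$ is pre-compact, so $\omega(\Omega)$ is a nonempty compact invariant set contained in $\overline\Omega\subset\cal U$, and it attracts $\Omega$ uniformly: $\sup_{x\in\Omega}\mathrm{dist}(\varphi^t(x),\omega(\Omega))\to 0$ as $t\to\infty$ (a standard contradiction argument using pre-compactness of $\overline\Omega$). Fix $\varepsilon>0$ and set $N_\varepsilon=\{x\in\Omega:\mathrm{dist}(x,\omega(\Omega))<\varepsilon\}$; uniform attraction together with positive invariance of $\Omega$ gives $\Omega\subset\varphi^{-t}(N_\varepsilon)$ for all large $t$, whence $\mu(N_\varepsilon)=\mu(\varphi^{-t}(N_\varepsilon))=\mu(\Omega)=1$ by positive invariance of $\mu$. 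Letting $\varepsilon=1/n\downarrow 0$ yields $\mu(\omega(\Omega))=1$, so ${\rm supp}(\mu)\subset\omega(\Omega)$; the invariance upgrade described above then completes part 1).

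For part 2), Proposition~\ref{omega-set1} gives that $\Omega$ is positively invariant and admits the maximal attractor $\cal J$, while dissipativity of $\varphi^t$ in $\Omega$ supplies a compact $B_0\subset\Omega$ such that each orbit from $\Omega$ eventually — but not uniformly — enters $B_0$. The first step is to deduce $\mu(B_0)=1$: the functions $\mathbf{1}_{\varphi^{-k}(B_0)}$, $k\in\N$, all have integral $\mu(\varphi^{-k}(B_0))=\mu(B_0)$ by positive invariance, and $\liminf_{k}\mathbf{1}_{\varphi^{-k}(B_0)}=1$ pointwise on $\Omega$ because $B_0$ absorbs each orbit after a point-dependent time, so Fatou's lemma gives $1\le\mu(B_0)$, hence $\mu(B_0)=1$. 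Now $B_0$ is pre-compact in $\Omega$, so $\omega(B_0)\subset\cal J$ and $B_0$ is attracted to $\cal J$ uniformly; repeating the neighborhood-pullback step of part 1) with $B_0$ in place of $\Omega$ and $\cal J$ in place of $\omega(\Omega)$ gives $\mu(N_\varepsilon)\ge\mu(B_0)=1$ for every $\varepsilon$-neighborhood $N_\varepsilon$ of $\cal J$, hence $\mu(\cal J)=1$, and invariance follows as before since $\cal J$ is invariant.

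The point requiring care is that $\varphi^t$ is only a \emph{local} flow, so everything involving backward time (the maps $\varphi^{-t}$ with $t>0$, the identity $\varphi^{-t}\circ\varphi^{t}=\mathrm{id}$, and the invariance of $\omega(\Omega)$ and $\cal J$) must be read on the compact invariant set where $\mu$ ultimately concentrates; once the support is identified this causes no difficulty. The genuinely non-automatic ingredient is the \emph{non-uniformity} of absorption in part 2): $B_0$ captures each orbit only after a time depending on the initial point, so one cannot pull a neighborhood of $\cal J$ back directly over $\Omega$, and must first establish $\mu(B_0)=1$ via the Fatou step before invoking the uniform attraction of the compact set $B_0$ to $\cal J$. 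Measurability is not an issue, since all sets involved ($\varphi^{-t}(B_0)$, $N_\varepsilon$, etc.) are preimages of Borel sets under continuous maps.
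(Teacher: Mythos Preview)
Your proof is correct. For part 1) the paper simply asserts that the claim follows from the definitions, so your detailed argument via uniform attraction of $\omega(\Omega)$ and the neighborhood pullback is a welcome expansion. For part 2) your approach and the paper's diverge at the key step of handling non-uniform absorption: the paper invokes Borel regularity of $\mu$ to produce, for each $\epsilon>0$, a compact $B\subset\Omega$ with $\mu(B)>1-\epsilon$ and an open $W\supset\mathcal J$ with $\mu(W\setminus\mathcal J)<\epsilon$, then uses that $\mathcal J$ attracts the compact $B$ to get $\varphi^t(B)\subset W$ and chains $\mu(\mathcal J)\ge\mu(W)-\epsilon\ge\mu(\varphi^t(B))-\epsilon=\mu\big((\varphi^t)^{-1}(\varphi^t(B))\big)-\epsilon\ge\mu(B)-\epsilon\ge 1-2\epsilon$. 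You instead use Fatou's lemma applied to the indicators $\mathbf 1_{\varphi^{-k}(B_0)}$ to get $\mu(B_0)=1$ for the specific absorbing set $B_0$ from the definition of dissipativity, and then run the same neighborhood pullback as in part 1). The paper's route is a bit shorter and avoids singling out $B_0$; your route makes the role of positive invariance more transparent (the constant sequence $\mu(\varphi^{-k}(B_0))=\mu(B_0)$ is what Fatou bounds from below) and yields the sharper intermediate fact $\mu(B_0)=1$ rather than $\mu(B)>1-\epsilon$. Your explicit remark on upgrading positive invariance to full invariance on the compact invariant limit set is also something the paper leaves implicit.
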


\begin{proof} 1) can be easily seen from the definitions
of invariant  measures, positively (resp. negatively) invariant
measures,  $\omega$-limit (resp. $\alpha$-limit)  sets.

To prove 2), we first consider the case that $\varphi^t$ is
dissipative in $\Omega$. Let $\cal J$ denote the maximal attractor
of $\varphi^t$ in $\Omega$. Since $\cal J$ is a compact invariant
set, it is sufficient to show that $\mu(\cal J)=1$.

For any given $\epsilon>0$, we have by the Borel regularity of $\mu$
that there exist a compact subset $B$ of $\Omega$ and an open
neighborhood $W$ of $\mathcal{J}$ lying in $\Omega$ such that
$\mu(B)>1-\epsilon$ and $\mu(W\setminus \mathcal{J})<\epsilon$.
Since $\mathcal{J}$ attracts $B$, there exists $t>0$ such that
$\varphi^t(B)\subseteq W$. Thus,
\[
\mu(\mathcal{J})\ge \mu(W)-\epsilon\ge
\mu(\varphi^t(B))-\epsilon=\mu \big(
(\varphi^t)^{-1}(\varphi^t(B)\big)-\epsilon\ge \mu(B)-\epsilon\ge
1-2\epsilon.
\]
Since $\epsilon$ is arbitrary, $\mu(\mathcal{J})=1$.

The case when  $\varphi^t$ is anti-dissipative in $\Omega$ is
similar.
\end{proof}

\medskip

\end{document}